\author{Andrew Krapivin \\ Carnegie Mellon University \\ \url{andrew@krapivin.net}  \and Benjamin Przybocki \\ Carnegie Mellon University \\ \url{benjamin.przybocki@gmail.com} \and Nicolás Sanhueza-Matamala \\ Universidad de Concepción \\ \url{nsanhuezam@udec.cl} \and Bernardo Subercaseaux \\ Carnegie Mellon University \\ \url{bersub@cmu.edu}}
\date{}
\title{Optimal and Efficient Partite Decompositions of Hypergraphs}
\newcolumntype{Y}{>{\raggedright\arraybackslash}X}
\begin{document}

\maketitle

\begin{abstract}
We study the problem of partitioning the edges of a $d$-uniform hypergraph $H$ into a family $\mathcal{F}$ of complete $d$-partite hypergraphs (\emph{$d$-cliques}). We show that there is a partition $\mathcal{F}$ in which every vertex $v \in V(H)$ belongs to at most $(\frac{1}{d!} + o_d(1))n^{d-1}/\lg n$ members of $\mathcal{F}$. Together with a simple information-theoretic lower bound, this settles the central question of a line of research initiated by~Erd\H{o}s and Pyber (1997) for graphs, and more recently by Csirmaz, Ligeti, and Tardos (2014) for hypergraphs. The $d=2$ case of this theorem answers a 40-year-old question of Chung, Erd\H{o}s, and Spencer (1983). Furthermore, our construction is algorithmically efficient: such optimal partitions can be constructed in time $O(n^d/d!)$. An immediate corollary of our result is an improved upper bound for the maximum share size for binary secret sharing schemes on uniform hypergraphs.

Building on results of Nechiporuk (1969), we prove that every graph with fixed edge density $\gamma \in (0,1)$ has a biclique partition of total weight at most $(\tfrac{1}{2}+o(1))\cdot h_2(\gamma) \frac{n^2}{\lg n}$, where $h_2$ is the binary entropy function. This result is asymptotically tight and answers a further question of Chung, Erd\H{o}s, and Spencer. Our construction implies that such biclique partitions can be constructed in time $O(m)$, which answers a question of Feder and Motwani (1995) and also improves upon results of Mubayi and Turán (2010) as well as Chavan, Rabinia, Grosu, and Brocanelli (2025). Using similar techniques, we also give an $n^{1+o(1)}$ algorithm for finding a subgraph $K_{t,t}$ with $t = (1-o(1)) \frac{\gamma}{h_2(\gamma)} \lg n$, which matches the celebrated K\H{o}v\'ari--Sós--Turán guarantee for small $\gamma$.

Our results show that biclique partitions are information-theoretically optimal representations for graphs at every fixed density, which makes them a natural succinct data structure. We show that with this succinct representation one can answer independent set queries and cut queries in time $O(n^2/ \lg n)$; prior work of Bansal, Williams, and Vassilevska Williams gave subquadratic algorithms for independent set queries at the cost of $\omega(n^2)$ preprocessing. We also show that if we increase the space usage by a constant factor, we can compute a modification of Charikar's $2$-approximation algorithm for the densest subgraph problem that runs in time $O(n^2/\lg \alpha)$ and gives a $2\alpha$-approximation for any $\alpha > 1$, thus establishing the first approximation guarantees that can be obtained in subquadratic time.

Finally, we show that graphs with \emph{polynomially bounded shattering}, a class including graphs of bounded VC-dimension, admit biclique partitions of weight $O(n^{2-1/(d+1)})$, where $d$ is the shattering exponent, which extends recent results of Cardinal and Yuditsky (2025).
\end{abstract}

\thispagestyle{empty}
\setcounter{page}{0}

\newpage
\setcounter{tocdepth}{2}
\tableofcontents
\thispagestyle{empty}
\setcounter{page}{0}
\newpage

\section{Introduction}\label{sec:intro}

In 1956, Lupanov~\cite{lupanov} considered the problem of minimizing the number of wires in a directed circuit, and observed that if $a$ input ports were fully connected to $b$ output ports, then the $a \cdot b$ wires could be replaced by adding one intermediary \emph{switch} $s$ to reduce the number of wires down to $a + b$. A repeated application of this idea is illustrated in~\Cref{fig:rectifier}.

\begin{figure}[h]
    \centering
      \begin{tikzpicture}
  \node[fill=\colTop,text=white,inner sep=2pt,minimum height=14pt] at (0,0) (a0) { $i_1$};
  \node[fill=\colTop,text=white,inner sep=2pt,minimum height=14pt] at (1,0) (a1) { $i_2$};
  \node[fill=\colTop,text=white,inner sep=2pt,minimum height=14pt] at (2,0) (a2) { $i_3$};
  \node[fill=\colTop,text=white,inner sep=2pt,minimum height=14pt] at (3,0) (a3) { $i_4$};

  \node[fill=\colTop,text=white,inner sep=2pt,minimum height=14pt] at (4,0) (a4) { $i_5$};

    \node[fill=\colTop,text=white,inner sep=2pt,minimum height=14pt] at (5,0) (a5) { $i_6$};
  \node[fill=\colBottom,text=white,inner sep=2pt,minimum height=14pt] at (0,-2) (b0) { $o_1$};
  \node[fill=\colBottom,text=white,inner sep=2pt,minimum height=14pt] at (1,-2) (b1) { $o_2$};
  \node[fill=\colBottom,text=white,inner sep=2pt,minimum height=14pt] at (2,-2) (b2) { $o_3$};
  \node[fill=\colBottom,text=white,inner sep=2pt,minimum height=14pt] at (3,-2) (b3) { $o_4$};
  \node[fill=\colBottom,text=white,inner sep=2pt,minimum height=14pt] at (4,-2) (b4) { $o_5$};

    \node[fill=\colBottom,text=white,inner sep=2pt,minimum height=14pt] at (5,-2) (b5) { $o_6$};
  \draw[dblarw={\colA}{1.0pt}{0.3pt}] (a0) -- (b1);
  \draw[dblarw={\colA}{1.0pt}{0.3pt}] (a0) -- (b2);
  \draw[dblarw={\colA}{1.0pt}{0.3pt}] (a1) -- (b2);
  \draw[dblarw={\colB}{1.0pt}{0.3pt}] (a1) -- (b3);
  \draw[dblarw={\colB}{1.0pt}{0.3pt}] (a1) -- (b4);
  \draw[dblarw={\colA}{1.0pt}{0.3pt}] (a2) -- (b0);
  \draw[dblarw={\colA}{1.0pt}{0.3pt}] (a2) -- (b1);
  \draw[dblarw={\colA}{1.0pt}{0.3pt}] (a2) -- (b2);
  \draw[dblarw={\colB}{1.0pt}{0.3pt}] (a2) -- (b3);
  \draw[dblarw={\colA}{1.0pt}{0.3pt}] (a3) -- (b0);
  \draw[dblarw={\colA}{1.0pt}{0.3pt}] (a3) -- (b1);
  \draw[dblarw={\colA}{1.0pt}{0.3pt}] (a3) -- (b2);
  \draw[dblarw={\colB}{1.0pt}{0.3pt}] (a4) -- (b3);
  \draw[dblarw={\colA}{1.0pt}{0.3pt}] (a1) -- (b1);
 \draw[dblarw={\colA}{1.0pt}{0.3pt}] (a0) -- (b0);
 \draw[dblarw={\colA}{1.0pt}{0.3pt}] (a1) -- (b0);

  \draw[dblarw={\colB}{1.0pt}{0.3pt}] (a2) -- (b4);
  \draw[dblarw={\colB}{1.0pt}{0.3pt}] (a4) -- (b4);
   
  \draw[dblarw={\colC}{1.0pt}{0.3pt}] (a3) -- (b3);
  \draw[dblarw={\colC}{1.0pt}{0.3pt}] (a5) -- (b5);

    \draw[dblarw={\colC}{1.0pt}{0.3pt}] (a3) -- (b5);
  \draw[dblarw={\colC}{1.0pt}{0.3pt}] (a5) -- (b3);

    \draw[lipicsYellow, line width=4.5pt, -latex] (5.8, -1) -- (7.5, -1);
\end{tikzpicture}
        \begin{tikzpicture}
  \node[fill=\colTop,text=white,inner sep=2pt,minimum height=14pt] at (0,0) (a0) { $i_1$};
  \node[fill=\colTop,text=white,inner sep=2pt,minimum height=14pt] at (1,0) (a1) { $i_2$};
  \node[fill=\colTop,text=white,inner sep=2pt,minimum height=14pt] at (2,0) (a2) { $i_3$};
  \node[fill=\colTop,text=white,inner sep=2pt,minimum height=14pt] at (3,0) (a3) { $i_4$};
  \node[fill=\colTop,text=white,inner sep=2pt,minimum height=14pt] at (4,0) (a4) { $i_5$};
   \node[fill=\colTop,text=white,inner sep=2pt,minimum height=14pt] at (5,0) (a5) { $i_6$};
  \node[fill=\colBottom,text=white,inner sep=2pt,minimum height=14pt] at (0,-2) (b0) { $o_1$};
  \node[fill=\colBottom,text=white,inner sep=2pt,minimum height=14pt] at (1,-2) (b1) { $o_2$};
  \node[fill=\colBottom,text=white,inner sep=2pt,minimum height=14pt] at (2,-2) (b2) { $o_3$};
  \node[fill=\colBottom,text=white,inner sep=2pt,minimum height=14pt] at (3,-2) (b3) { $o_4$};
  \node[fill=\colBottom,text=white,inner sep=2pt,minimum height=14pt] at (4,-2) (b4) { $o_5$};
 \node[fill=\colBottom,text=white,inner sep=2pt,minimum height=14pt] at (5,-2) (b5) { $o_6$};

    \node[circle, fill=\colA,text=black,inner sep=1pt ] at (1,-1) (s1) { $s_1$};

\draw[dblarw={gray}{0.6pt}{0.2pt}] (a0) -- (s1);
\draw[dblarw={gray}{0.6pt}{0.2pt}] (a1) -- (s1);
\draw[dblarw={gray}{0.6pt}{0.2pt}] (a2) -- (s1);
\draw[dblarw={gray}{0.6pt}{0.2pt}] (a3) -- (s1);

\draw[dblarw={gray}{0.6pt}{0.2pt}] (s1) -- (b0);
\draw[dblarw={gray}{0.6pt}{0.2pt}] (s1) -- (b1);
\draw[dblarw={gray}{0.6pt}{0.2pt}] (s1) -- (b2);

    \node[circle, fill=\colB,text=black,inner sep=1pt ] at (2.5,-1) (s2) { $s_2$};

        \node[circle, fill=\colC,text=black,inner sep=1pt ] at (4,-1) (s3) { $s_3$};

\draw[dblarw={gray}{0.6pt}{0.2pt}] (a1) -- (s2);
\draw[dblarw={gray}{0.6pt}{0.2pt}] (a2) -- (s2);
\draw[dblarw={gray}{0.6pt}{0.2pt}] (a4) -- (s2);

\draw[dblarw={gray}{0.6pt}{0.2pt}] (s2) -- (b3);
\draw[dblarw={gray}{0.6pt}{0.2pt}] (s2) -- (b4);

\draw[dblarw={gray}{0.6pt}{0.2pt}] (a3) -- (s3);
\draw[dblarw={gray}{0.6pt}{0.2pt}] (a5) -- (s3);

\draw[dblarw={gray}{0.6pt}{0.2pt}] (s3) -- (b3);
\draw[dblarw={gray}{0.6pt}{0.2pt}] (s3) -- (b5);



    
\end{tikzpicture}
    \caption{Illustration of the application of biclique partitions in circuit design.}
    \label{fig:rectifier}
\end{figure}
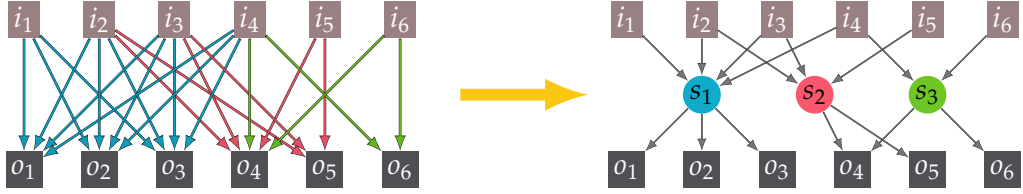

Without switches, a circuit with $n$ input ports and $n$ output ports may require $n^2$ wires, but Lupanov proved that with switches, such a circuit can be realized with only $(1 + o(1)) \frac{n^2}{\lg n}$ wires.\footnote{All logarithms in this paper are base 2.}  While Lupanov formulated his result in terms of circuits, it can be naturally recast in terms of \emph{biclique partitions} (i.e., writing a graph as an edge-disjoint union of bicliques) for bipartite graphs.

\begin{theorem}[Lupanov~\cite{lupanov}]\label{thm:lupanov}
    Let $G = (X \sqcup Y, E)$ be a bipartite graph with $|X| \ge |Y|$ and $|Y| = \omega(\lg |X|)$. Then, $E$ can be partitioned into bicliques $B_1, \ldots B_k$ with weight 
    \(
    \sum_{i=1}^k |V(B_i)| \le (1 + o(1)) \frac{|X| \cdot |Y|}{\lg |X|}.
    \)
    Moreover, there is some such $G$ for which every biclique partition has weight
    \(
     (1 - o(1)) \frac{|X| \cdot |Y|}{\lg |X|}.
    \)
\end{theorem}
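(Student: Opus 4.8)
Write $N := |X| \ge |Y| =: M$; since $M = \omega(\lg N)$ we may assume $M \ge \lg N$. The plan is to obtain the upper bound from an explicit block construction and the lower bound from an information-theoretic counting argument. The one point that requires care is the upper bound constant: the naive construction gives the right order $\Theta(NM/\lg N)$ but the wrong constant unless $M$ is nearly as large as $N$, and the fix is to split the \emph{smaller} side $Y$ into blocks rather than $X$.

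\emph{Upper bound.} I would set $k := \lfloor \lg N - 2\lg\lg N \rfloor$, so that $k = (1-o(1))\lg N$ and $2^{k} \le N/\lg^{2} N$, and partition $Y = B_{1} \sqcup \cdots \sqcup B_{r}$ into $r = \lceil M/k\rceil$ blocks of size at most $k$. For a block $B = B_{i}$ and a vertex $x \in X$, the trace $N(x) \cap B$ is a subset of $B$; for each nonempty $q \subseteq B$ with $X_{B,q} := \{x \in X : N(x) \cap B = q\}$ nonempty I form the biclique with parts $X_{B,q}$ and $q$. A given edge $xy \in E$ lies in exactly one of these, namely the one indexed by the block $B$ containing $y$ and by $q = N(x) \cap B$ (note $y \in q$ since $y \in N(x) \cap B$), so these bicliques partition $E$. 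The total weight splits as an ``$X$-side'' term plus a ``$Y$-side'' term. Since for each fixed block the sets $X_{B,q}$ partition a subset of $X$, the $X$-side term is $\sum_{i}\sum_{q}|X_{B_{i},q}| \le rN = \tfrac{NM}{k} + O(N) = (1+o(1))\tfrac{NM}{\lg N}$, using $N = o(NM/\lg N)$, i.e.\ $M = \omega(\lg N)$. For the $Y$-side term, each block has at most $\min(2^{k},N) = 2^{k}$ distinct nonempty traces, each of size at most $k$, so $\sum_{i}\sum_{q}|q| \le r k 2^{k} \le rkN/\lg^{2}N = O(NM/\lg^{2}N) = o(NM/\lg N)$. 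Adding the two terms gives weight $(1+o(1))NM/\lg N$.

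\emph{Lower bound.} Here I would count the bipartite graphs on the fixed parts $X \sqcup Y$ that admit a biclique partition of weight at most $W$. Such a partition is a set of $t \le W/2$ bicliques $(A_{i},C_{i})$ with $\sum_{i}(|A_{i}|+|C_{i}|) \le W$; encode it by recording $t$, then the sizes $|A_{i}|,|C_{i}|$ via a prefix-free integer code ($O(W)$ bits in total, by concavity of $\lg$), then the elements of $A_{i} \subseteq X$ and of $C_{i} \subseteq Y$ ($\sum_{i}(|A_{i}|\lg N + |C_{i}|\lg M) \le W\lg N$ bits, using $M \le N$). This is $W\lg N\,(1+o(1))$ bits altogether, and since the partition determines the graph, at most $2^{W\lg N(1+o(1))}$ graphs are representable with weight at most $W$. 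Taking $W = (1-\varepsilon)NM/\lg N$ this count is $2^{(1-\varepsilon)NM(1+o(1))} < 2^{NM}$ for large $N$, so some graph requires weight more than $(1-\varepsilon)NM/\lg N$; letting $\varepsilon \to 0$ slowly (e.g.\ $\varepsilon = 1/\lg\lg N$, which is $\omega(1/\lg N)$) produces a graph of minimum biclique-partition weight $(1-o(1))NM/\lg N$.

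\emph{Main obstacle.} The nonroutine step is the choice of which side to block in the upper bound. Blocking $X$ into blocks of size $\approx\lg N$ and taking one biclique per occurring trace makes the ``pattern fan-out'' cost $\sum_{B}\sum_{p}|p|$ equal to $\Theta(NM)$ — a $\lg N$ factor too large — unless $M = N^{1-o(1)}$, because that construction needs $2^{k} = o(M/\lg N)$ and $k \ge (1-o(1))\lg N$ simultaneously. Blocking the smaller side $Y$ instead replaces $M$ by $N$ in the fan-out constraint, making it compatible with $k \approx \lg N$; the fan-out cost drops to $\Theta(NM/\lg^{2}N)$ while the routing cost stays at $\Theta(NM/\lg N)$. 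Once this is recognized, both halves are bookkeeping and the lower bound is a standard encoding count.
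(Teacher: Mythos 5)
Your proof is correct, and although the paper cites Lupanov's theorem without reproving it, the two ingredients you use are exactly the techniques the paper employs for its own closely related results: the trace-by-block construction with block size $\lfloor \lg n - 2\lg\lg n\rfloor$ is the same device used in the proof of \Cref{thm:ep}, and the prefix-free encoding count is precisely the argument of \Cref{thm:lower-bound-cover}. In particular your observation that one must block the smaller side $Y$ (so the fan-out cost $r k 2^{k}$ is controlled by $N$ rather than $M$, letting the hypothesis $|Y|=\omega(\lg|X|)$ suffice) is the right and necessary point, and your bookkeeping of the two cost terms and of the $o(1)$ slack in the lower bound is sound.
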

The lower bound of~\Cref{thm:lupanov},
applied to a bipartite graph with $|X| = |Y| = n/2$, yields a general lower bound of $(1 - o(1)) \frac{n/2 \cdot n/2}{\lg n} = \left(\frac{1}{4} - o(1)\right) \frac{n^2}{\lg n}$ for $n$-vertex graphs. On the other hand, Lupanov's upper bound can be easily extended to general graphs by the following observation: from a graph $G = (V, E)$, choose an arbitrary total order $\prec$ on $V$, and then create a bipartite graph $G' = (V_1 \sqcup V_2, E')$ where each part $V_i$ is a copy of $V$, and for each edge $\{u, v\} \in E$ with $u \prec v$, add to $G'$ the edge $\{u_1, v_2\}$ where $u_1 \in V_1$ is the element corresponding to $u$, and $v_2 \in V_2$ the element corresponding to $v$. Then, observe that bicliques in $G'$ correspond to bicliques in $G$ (but not conversely), and thus~\Cref{thm:lupanov} applied to $G'$ implies that $G$ admits a biclique partition of weight $(1 + o(1))\frac{n^2}{\lg n}$, with $n := |V(G)|$. 
Therefore, if we denote by $\totalWeight(G)$ the minimum weight of a biclique partition of a graph $G$, and then $\totalWeight(n) := \displaystyle \max \{ \totalWeight(G) \!:\! G \text{ a graph with } |V(G)| = n\}$, Lupanov's result has the following consequence.

\begin{corollary}\label{cor:general-lupanov}
    $(\tfrac{1}{4} - o(1))\frac{n^2}{\lg n} \leq \totalWeight(n) \leq (1+o(1))\frac{n^2}{\lg n}$.
\end{corollary}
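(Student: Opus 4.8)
The plan is to obtain both inequalities from Lupanov's theorem (\Cref{thm:lupanov}) through the bipartite encoding sketched just before the statement, checking only the two small structural facts that make the encoding work.

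For the upper bound, take an arbitrary $n$-vertex graph $G=(V,E)$, fix a total order $\prec$ on $V$, and form the bipartite graph $G'=(V_1\sqcup V_2,E')$ with $V_1,V_2$ disjoint copies of $V$ and $E'=\{\{u_1,v_2\}:\{u,v\}\in E,\ u\prec v\}$, so $|V_1|=|V_2|=n$. I would first observe that any biclique of $G'$ has its two sides on opposite copies (since $G'$ has no edge inside $V_1$ or inside $V_2$), and that projecting such a biclique $K_{A,B}$ back to $V$ gives a biclique of $G$ on vertex sets of the same sizes: every projected pair is an edge of $G$ by construction, and the two projected sides are disjoint because $E'$ contains no image $\{w_1,w_2\}$ of a loop. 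Hence the projection carries a biclique partition of $G'$ to one of $G$ of equal total weight, so $\totalWeight(G)\le\totalWeight(G')$. Since $|Y|=n=\omega(\lg n)=\omega(\lg|X|)$, \Cref{thm:lupanov} bounds the latter by $(1+o(1))\frac{n^2}{\lg n}$, and because the error term there depends only on $|X|$ and $|Y|$ this bound is uniform over all $G$, giving $\totalWeight(n)\le(1+o(1))\frac{n^2}{\lg n}$.

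For the lower bound, invoke the second part of \Cref{thm:lupanov} with $|X|=\lceil n/2\rceil$ and $|Y|=\lfloor n/2\rfloor$ to get an $n$-vertex bipartite graph $G^\ast$ none of whose biclique partitions (as a bipartite graph) has weight below $(1-o(1))\frac{|X||Y|}{\lg|X|}=(\tfrac14-o(1))\frac{n^2}{\lg n}$. The one thing to check is that allowing arbitrary graph bicliques does not help: again because $G^\ast$ has no edge inside either side, the same argument shows every biclique of $G^\ast$ respects its bipartition, so its graph biclique number equals its bipartite biclique number. Therefore $\totalWeight(n)\ge\totalWeight(G^\ast)\ge(\tfrac14-o(1))\frac{n^2}{\lg n}$.

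I do not expect a genuine obstacle here: all the weight lives in \Cref{thm:lupanov}, and the corollary is pure bookkeeping. The only points needing attention are the two ``no edge inside a side'' observations, which force bicliques to respect bipartitions and keep the encoding $\{u,v\}\mapsto\{u_1,v_2\}$ injective on the vertex set of each biclique, together with the remark that Lupanov's $o(1)$ is uniform (a function of the part sizes alone), which is what lets us quantify over all $n$-vertex graphs simultaneously.
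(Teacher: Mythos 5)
Your proposal is correct and is the same argument the paper gives (in the paragraph just before the corollary): encode $G$ as the bipartite graph $G'$ on two copies of $V$ via a total order and apply Lupanov's upper bound, and for the lower bound take a hard Lupanov instance on two parts of size $\approx n/2$, noting that in a bipartite graph every biclique respects the bipartition. The two checks you flag — that bicliques of $G'$ project to bicliques of $G$ with disjoint sides, and that bicliques of a bipartite graph must cross the bipartition — are exactly the bookkeeping the paper's informal ``bicliques in $G'$ correspond to bicliques in $G$'' is relying on.
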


Unfortunately, Lupanov's result, originally published in Russian, continues to remain largely unknown in the West~\cite{Jukna2012}, and it was only in 1983 that Chung, Erd\H{o}s, and Spencer published bounds for general graphs, unaware of Lupanov's previous work.
Denoting by $\cover(G)$, and analogously, $\cover(n)$, the minimum weight of a biclique \emph{covering} (i.e., edges of $G$ may belong to multiple bicliques), we trivially have $\cover(n) \leq \totalWeight(n)$, and the Chung--Erd\H{o}s--Spencer (CES) theorem states: 

\begin{theorem}[\cite{chungDecompositionGraphsComplete1983}]\label{thm:CES}
     $(\frac{\lg e}{2e} - o(1))\frac{n^2}{\lg n} \leq \cover(n) \leq  \totalWeight(n) \leq (\frac{\lg e}{2} + o(1))\frac{n^2}{\lg n}$.
\end{theorem}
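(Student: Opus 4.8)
The middle inequality $\cover(n) \le \totalWeight(n)$ is immediate, since every biclique \emph{partition} is in particular a biclique \emph{covering}. For the two outer inequalities I would argue as follows.

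\emph{The lower bound.} Fix $n$ and set $W := \cover(n)$, so that every graph $G$ on vertex set $[n]$ admits a biclique covering of weight at most $W$. I would prove $W \ge (\tfrac12 - o(1))\tfrac{n^2}{\lg n}$ --- which is stronger than the claimed $\tfrac{\lg e}{2e}$, and in fact the truth --- by a counting argument: encode each covering compactly and compare the number of encodings to $2^{\binom n2}$, the number of graphs. Given a covering $B_1,\dots,B_k$ with each $B_i$ an ordered pair $(L_i,R_i)$ of disjoint nonempty vertex sets, record for each vertex $v$ the set $S_v \subseteq [k]$ of bicliques containing $v$, together with the side it occupies in each; this data determines all the $B_i$, hence $G = \bigcup_i L_i \times R_i$. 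Since $\sum_v |S_v| = \sum_i(|L_i|+|R_i|) \le W$ and $k \le W/2$, the multinomial identity $\sum_{\sum d_v = W}(\prod_v d_v!)^{-1} = n^W/W!$ together with Stirling bounds the number of encodings by $2^{o(n^2)} \cdot (en)^{W}$. Comparing with $2^{\binom n2}$ gives $\binom n2 \le W \lg n + o(n^2)$, whence $W \ge (\tfrac12 - o(1))\tfrac{n^2}{\lg n}$, and a fortiori the stated bound.

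\emph{The upper bound.} Here I would use greedy peeling. The key lemma, of Kővári--Sós--Turán type, is that every $n$-vertex graph of edge density $\gamma = m/\binom n2$ (with $m$ not too small) contains a biclique $K_{a,b}$ of \emph{efficiency} $\tfrac{ab}{a+b} \ge (1-o(1))\tfrac{\lg n}{\lg(1/\gamma)}$: take a uniformly random $a$-set $A$ with $a = (1-\varepsilon)\tfrac{\lg n}{\lg(1/\gamma)}$, observe that by convexity of $x \mapsto \binom{x}{a}$ its expected number of common neighbours is at least $(1-o(1))\,n\gamma^{a} = n^{\Omega(1)} \gg a$, and keep a good $A$ together with its common neighbourhood. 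Now repeatedly extract such a biclique and delete its edges; once the density drops below $1/\operatorname{polylog} n$ (equivalently $m = O(n^2/\lg^2 n)$), finish by making each surviving edge its own biclique, at total cost $o(n^2/\lg n)$. The weight spent while the edge count is near $m$ is about $\tfrac{\lg(1/\gamma)}{\lg n}$ per edge removed, so the total weight is at most $(1+o(1))\tfrac{1}{\lg n}\int_0^{\binom n2}\lg\!\big(\tfrac{\binom n2}{m}\big)\,dm = (1+o(1))\tfrac{\lg e}{2}\cdot\tfrac{n^2}{\lg n}$, the integral being $\binom n2\int_0^1 \lg(1/x)\,dx = \binom n2 \lg e$. (Alternatively one can route this through the bipartite reduction and a density-sensitive sharpening of \Cref{thm:lupanov}.)

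\emph{Main obstacle.} The delicate part is the upper bound, and specifically the constant: one must peel \emph{unbalanced} bicliques, since a balanced $K_{t,t}$ loses a factor of $2$ in efficiency, and the Kővári--Sós--Turán estimate has to be carried out without the customary $e^{-a}$ slack --- this is why the random-subset formulation, yielding $(1-o(1))\gamma^a$ rather than the textbook inequality, is used. One must also verify that the peeling stays valid down to densities $1/\operatorname{polylog} n$ (i.e., that the extracted bicliques remain nontrivial) and that the Riemann-sum-to-integral passage is legitimate across the super-polynomially many steps; both are routine once the low-density tail is discarded explicitly as above. The lower bound, by contrast, reduces to a one-line estimate after the per-vertex encoding is in place.
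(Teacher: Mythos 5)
The paper does not actually prove \Cref{thm:CES}; it is cited from Chung, Erd\H{o}s, and Spencer (1983), and the paper's own contribution is the strictly sharper \Cref{thm-ces-optimal} (constant $\tfrac12$ on both sides). With that caveat, your proposal is essentially correct but mixes two provenances: for the upper bound you reproduce the original CES strategy, while for the lower bound you actually prove the paper's own improvement.

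For the lower bound you overshoot the target: your encoding argument gives $(\tfrac12-o(1))\tfrac{n^2}{\lg n}$, which is the content of the paper's \Cref{thm:lower-bound-cover}, not the weaker $\tfrac{\lg e}{2e}$ claimed here. The paper's proof of that stronger bound is the same counting idea but is organized per-biclique rather than per-vertex: it simply writes each biclique as a prefix-free string of $(|X|+|Y|)(\lg n + O(1))$ bits (unary sizes followed by vertex IDs) and concatenates, so the length of the injective encoding is directly $\cover(G)(\lg n+O(1))$; no multinomial estimate or Stirling bound is needed. Your per-vertex encoding works, but it introduces the $\binom{2k}{d_v}$ bookkeeping and the $\sum 1/\prod d_v! = n^W/W!$ identity, which is an unnecessary detour --- though it does match the structure of the CB representation used elsewhere in the paper.

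For the upper bound, your greedy KST-peeling with the sharpened $(1-o(1))\gamma^a$ common-neighbourhood estimate is precisely what the paper attributes to CES, Tuza, and Bublitz, and the integral $\binom n2\int_0^1\lg(1/x)\,dx=\binom n2\lg e$ is the right accounting; discarding the low-density tail at $\gamma=1/\mathrm{polylog}\,n$ is indeed what makes the Riemann-sum-to-integral passage legitimate. The one thing worth emphasizing is that peeling is intrinsically stuck at the constant $\lg e/2$, and the paper's improvement to $\tfrac12$ (\Cref{thm:ep}) abandons peeling entirely: it partitions $V$ into blocks of size $\approx\lg n - 2\lg\lg n$, groups vertices by their neighbourhood trace inside each block, and uses an almost-regular tournament between blocks to halve the per-vertex load. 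So while your proof is a valid proof of the cited theorem, it is neither the route the paper takes for the lower bound it actually proves, nor a route that can be pushed to the optimal upper constant.
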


Since, $\lg(e)/(2e) \approx 0.265 > \nicefrac{1}{4}$, and $\lg(e)/2 \approx 0.721 < 1$,~\Cref{thm:CES} improved~\Cref{cor:general-lupanov} on both accounts. Tuza~\cite{Tuza1984} and Bublitz~\cite{Bublitz1986} independently proved similar results, albeit with worse quantitative bounds. Chung, Erd\H{o}s, Spencer, and Tuza asked whether $\lim_{n \to \infty} \totalWeight(n)/(n^2/\lg(n))$ and $\lim_{n \to \infty} \cover(n)/(n^2/\lg(n))$ exist and if so what their values are. Our first contribution, presented in \Cref{sec:optimal_biclique}, is an answer to these questions:
\begin{theorem} \label{thm-ces-optimal}
    $(\frac{1}{2} - o(1))\frac{n^2}{\lg n} \leq \cover(n) \leq \totalWeight(n) \leq (\frac{1}{2} + o(1))\frac{n^2}{\lg n}$.
\end{theorem}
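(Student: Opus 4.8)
The plan is to prove the three inequalities separately. The middle one, $\cover(n) \le \totalWeight(n)$, is immediate, since every biclique partition of a graph is in particular a biclique covering of it, so $\cover(G) \le \totalWeight(G)$ for each $G$ and taking the maximum over $n$-vertex graphs gives the inequality. The real work is thus the lower bound on $\cover(n)$ and the upper bound on $\totalWeight(n)$.

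For the lower bound I would use a counting argument. Suppose every graph on the labeled vertex set $[n]$ has a biclique covering of weight at most $w$. A biclique covering of such a graph is a family of bicliques $B_i = (X_i,Y_i)$, each a subgraph of the covered graph (so $X_i \cap Y_i = \emptyset$ and, after discarding redundant members, there are at most $w/2$ of them), of total weight $\sum_i(|X_i|+|Y_i|) \le w$. Such a covering is determined by its set of (vertex, side, biclique)-incidences, a subset of size equal to its weight of a ground set of size $\le 2 \cdot n \cdot (w/2) = nw$; hence there are at most $\sum_{w' \le w}\binom{nw}{w'} \le \binom{nw}{w}\,2^{O(w)} = 2^{w(\lg n + O(1))}$ biclique coverings of weight $\le w$, and the covered graph $\bigcup_i E(B_i)$ is recovered from the incidence set. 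Since there are $2^{\binom n2}$ graphs on $[n]$, we need $2^{w(\lg n + O(1))} \ge 2^{\binom n2}$, i.e.\ $w \ge \binom n2 / (\lg n + O(1)) = (\tfrac12 - o(1))\frac{n^2}{\lg n}$, which is the claimed bound (and it transfers to $\totalWeight(n)$ through the middle inequality).

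For the upper bound I would run a divide-and-conquer reduction to Lupanov's bipartite theorem (\Cref{thm:lupanov}). Given a graph $G$ on $n$ vertices, split $V(G) = A \sqcup B$ with $|A| = \lceil n/2 \rceil$ and $|B| = \lfloor n/2 \rfloor$. The crossing edges $E(G[A,B])$ form a bipartite graph, and \Cref{thm:lupanov} applies (as $|A|, |B| = \omega(\lg n)$), yielding a biclique partition of $E(G[A,B])$ --- whose parts are genuine bicliques of $G$ --- of weight $(1+o(1))\frac{|A|\,|B|}{\lg\max(|A|,|B|)} = (1+o(1))\frac{n^2}{4\lg n}$. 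Recursing on $G[A]$ and $G[B]$, and switching to the trivial partition into single edges once a subgraph has fewer than $\lg^2 n$ vertices (contributing $O(\lg^4 n)$ per such piece, so $O(n\lg^2 n) = o(n^2/\lg n)$ in total), one obtains the recursion $\totalWeight(G) \le 2\,\totalWeight(G') + (1+o(1))\frac{n^2}{4\lg n}$ on the two halves, which unrolls to
\[
\totalWeight(G) \;\le\; o\!\left(\frac{n^2}{\lg n}\right) + (1+o(1)) \sum_{j\ge 0} 2^j \cdot \frac{(n/2^j)^2}{4\,\lg(n/2^j)} \;=\; o\!\left(\frac{n^2}{\lg n}\right) + (1+o(1)) \,\frac{n^2}{4} \sum_{j\ge 0} \frac{1}{2^j\,(\lg n - j)},
\]
the sum effectively running over the $\lg n - O(\lg\lg n)$ recursion levels. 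Since $\sum_{j\ge 0}\frac{1}{2^j(\lg n-j)} = (1+o(1))\frac{2}{\lg n}$ --- the levels with $j = o(\lg n)$ contribute $(1+o(1))\frac1{\lg n}\sum_j 2^{-j} = (1+o(1))\frac2{\lg n}$, and the $2^{-j}$ decay makes the rest contribute $o(1/\lg n)$ --- we get $\totalWeight(G) \le (\tfrac12 + o(1))\frac{n^2}{\lg n}$.

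The individual steps are short; where I expect the main obstacle to lie is the bookkeeping of the $o(1)$ terms so that the recursion telescopes to exactly the constant $\tfrac12$. In particular one must check that the $o(1)$ loss in \Cref{thm:lupanov} is uniform over all recursion levels (it is, because every subproblem has at least $\lg^2 n = \omega(\lg n)$ vertices, so the ratio ``$|Y|/\lg|X|$'' is unbounded uniformly) and that replacing $\lg(n/2^j)$ by $\lg n$ in the denominators costs only a lower-order term. Conceptually, splitting into two equal halves makes the top recursion level cost only $\tfrac14\cdot\frac{n^2}{\lg n}$, and the geometric series over levels multiplies this by exactly $2$, meeting the information-theoretic lower bound up to $o(1)$. (Obtaining the stronger per-vertex statement --- every vertex in at most $(\tfrac12+o(1))\frac{n}{\lg n}$ bicliques, which is what powers the $d$-uniform generalization --- would instead require a globally load-balanced variant of Lupanov's construction in place of this asymmetric recursion, and that is where the problem becomes genuinely harder.)
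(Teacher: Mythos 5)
Your proof is correct, and for the upper bound it takes a genuinely different route than the paper. Your lower bound (counting biclique covers of weight $\le w$ by their vertex–side–biclique incidences and comparing to $2^{\binom{n}{2}}$) is the same information-theoretic argument as the paper's Theorem~\ref{thm:lower-bound-cover}, just phrased as a cardinality bound rather than an explicit encoding. The upper bound is where you diverge: the paper does not recurse but instead proves the strictly stronger per-vertex statement $\lbp(n) \le (\tfrac12+o(1))\tfrac{n}{\lg n}$ in one shot (Theorem~\ref{thm:ep}), by partitioning $V$ into $\approx n/\lg n$ parts of size $\approx \lg n$, orienting the parts by an almost-regular tournament, and forming one biclique $(S, A(S))$ per part and per neighborhood pattern $S$; the factor $\tfrac12$ comes from almost-regularity (each vertex lies on the ``receiving'' side for only half the parts). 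Your divide-and-conquer applies Lupanov's bipartite theorem (Theorem~\ref{thm:lupanov}) as a black box across a balanced bipartition and recurses on the two halves, with $\tfrac12$ emerging from the geometric series $\sum_{j\ge 0} 2^{-(j+2)} = \tfrac14 \cdot 2$. Your approach is shorter and more elementary as a reduction, but --- as you correctly flag --- it only bounds the total weight, not the per-vertex load, and the load bound is precisely what the paper needs for $\lbp(n)$ (and in turn for the $d$-uniform generalization, Theorem~\ref{theorem:ep-hypergraph-upper}). The one place to be careful in your write-up is the uniformity of the $o(1)$ in Theorem~\ref{thm:lupanov} across recursion levels: the theorem as stated is purely asymptotic, so one should really invoke the quantitative form of Lupanov's bound, i.e., an error term depending only on $|X|,|Y|$ that is uniformly small once both sides have size $\ge \tfrac12\lg^2 n$; your cutoff at $\lg^2 n$ vertices makes this work, but it is worth making explicit rather than leaving implicit.
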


In fact, in~\Cref{sec:random_graphs}, we show that the same bounds hold for $\cover(G)$ and $\totalWeight(G)$ for $G \sim G(n,1/2)$ with high probability, which also answers a question of Chung, Erd\H{o}s, and Spencer.

An important extension of the CES theorem is the Erd\H{o}s--Pyber theorem~\cite{erdos-pyber}, which says that every graph can be partitioned into bicliques so that each vertex is in at most $O(n/\lg n)$ bicliques, a result that directly implies that $\totalWeight(n) = O(n^2/\lg n)$. For example, in the aforementioned circuit minimization problem, the Erd\H{o}s--Pyber theorem says that one can not only reduce the number of wires to $O(n^2/\lg n)$ but also ensure that each input/output port is connected to $O(n/\lg n)$ wires, and thus no part of the circuit is very cluttered.

Interestingly, a key application of the Erd\H{o}s--Pyber theorem arises in cryptography, and more specifically, in the design of \emph{secret sharing schemes}~\cite{Beimel2014, blundoInformationRateSecret1996}. In a secret sharing scheme, the goal is to split a secret string $s$ into pieces, and distribute them to agents, in such a way that only certain specific groups of agents have enough information to reconstruct the secret if they join their pieces. These `allowed' groups are often all of the same size; if that size is $2$, then they can be specified by edges of a graph where agents are the vertices, but for a larger size $d$, they are specified by $d$-uniform hypergraphs. In this context, covering results like the CES theorem allow us to bound the \emph{total share} of the secret pieces used, and Erd\H{o}s--Pyber theorem bounds the \emph{maximum share} some agent is required to receive. This has motivated a line of work on Erd\H{o}s--Pyber results for $d$-uniform hypergraphs~\cite{ep-hypergraph,beimel:LIPIcs.ITC.2023.16}.

More precisely, define the \emph{local biclique partition number} of a graph $G$, denoted $\lbp(G)$, as the least natural number such that there is a biclique partition of $G$ for which every vertex is in at most $\lbp(G)$ bicliques. Similarly, let $\lbp(n) := \displaystyle \max \{ \lbp(G) : G \text{ a graph with } |V(G)| = n\}$, and observe that $\totalWeight(n) \le n \cdot \lbp(n)$. With this notation, the Erd\H{o}s--Pyber theorem says that $\lbp(n) = O(n/\lg(n))$. This was improved by Csirmaz, Ligeti, and Tardos~\cite{ep-hypergraph} to $\lbp(n) \leq (1+o(1)) \frac{n}{\lg n}$. Our proof of \Cref{thm-ces-optimal} in fact shows that $\lbp(n) \leq (\frac{1}{2}+o(1)) \frac{n}{\lg n}$.
\Cref{thm-ces-optimal} also implies that this is tight, since $\lbp(n) \ge \totalWeight(n)/n \ge (\frac{1}{2}-o(1)) \frac{n}{\lg n}$.

Csirmaz, Ligeti, and Tardos also generalized the Erd\H{o}s--Pyber theorem to $d$-uniform hypergraphs (\emph{$d$-graphs}).
Let a \emph{$d$-clique} be a $d$-graph $H = (V_1 \sqcup \dots \sqcup V_d, E)$ such that
\[
    E(H) = \{(v_1,\dots,v_d) : v_i \in V_i \ \text{for all} \ i \in [d]\}.
\]
This is the natural generalization of a biclique to $d$-uniform hypergraphs.
We also will use the notation $(V_1, \dotsc, V_d)$ to refer to this $d$-clique, and say that $V_1, \dotsc, V_d$ are its \emph{parts}.
Given a $d$-uniform hypergraph $H$, let $\lmp_d(H)$ be the least natural number such that there is a $d$-clique partition of $H$ for which every vertex is in at most $\lmp_d(H)$ $d$-cliques. We define $\mp_d(H)$, $\mc_d(H)$, $\mp_d(n)$, $\mc_d(n)$, and $\lmp_d(n)$ analogously to the definitions of $\cover$ and $\totalWeight$ above. Csirmaz, Ligeti, and Tardos proved that
\[
    \left(\frac{0.53}{d!} - o_d(1)\right) \frac{n^{d-1}}{\lg n} \le \lmp_d(n) \le \left(\frac{1}{(d-2)!} + o_d(1)\right) \frac{n^{d-1}}{\lg n}.
\]
Earlier, Bublitz~\cite{Bublitz1986} proved that
\(
    \mp_d(n) \le \left(\frac{32}{(d-1)!} + o_d(1)\right) \frac{n^d}{\lg n}.
\)
Our main theorem, proved in \Cref{sec:hypergraphs}, improves on these results and completely settles the asymptotics of the relevant functions:
\begin{theorem} \label{theorem:ces-hypergraph-optimal}
    $\left(\frac{1}{d!} - o_d(1)\right)\frac{n^{d-1}}{\lg n} \le \frac{\mc_d(n)}{n} \le \frac{\mp_d(n)}{n} \le \lmp_d(n) \le \left(\frac{1}{d!} + o_d(1)\right)\frac{n^{d-1}}{\lg n}$.
\end{theorem}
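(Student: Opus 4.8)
I would split the proof into three parts: the two middle inequalities (immediate), the leftmost bound (a counting argument), and the rightmost bound (the substantive content). A $d$-clique partition is in particular a $d$-clique cover, so $\mc_d(n) \le \mp_d(n)$; and if a $d$-clique partition $\mathcal{F}$ of an $n$-vertex $d$-graph $H$ has every vertex in at most $\lmp_d(H)$ of its members, then counting incidences gives $\sum_{B \in \mathcal{F}}|V(B)| = \sum_{v \in V(H)}|\{B \in \mathcal{F} : v \in V(B)\}| \le n\cdot\lmp_d(H)$, so $\mp_d(n) \le n\cdot\lmp_d(n)$. For the lower bound I would use the standard entropy count: there are $2^{\binom{n}{d}}$ labelled $d$-graphs on $[n]$, and a $d$-clique cover determines its host $d$-graph as the union of the edge sets of its members, so distinct $d$-graphs have distinct minimum covers. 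It then suffices to bound the number of covers of total weight $\le w$: such a cover has $\le w$ members, their weights form a composition of an integer $\le w$ (at most $2^{O(w)}$ possibilities), and a $d$-clique of weight $w_j$ is specified by its $w_j$ used vertices together with an assignment of each to one of the $d$ parts ($\le \binom{n}{w_j}d^{w_j} \le (dn)^{w_j}$ ways). Hence there are at most $2^{O(w)}(dn)^w = 2^{w\lg n + O_d(w)} = 2^{(1+o(1))w\lg n}$ such covers (as $\lg n \to \infty$), so $\binom{n}{d} \le (1+o(1))\mc_d(n)\lg n$, i.e. $\mc_d(n) \ge (1-o(1))\binom{n}{d}/\lg n = (\tfrac1{d!}-o_d(1))n^d/\lg n$; dividing by $n$ gives the claim, and the same count shows a uniformly random $d$-graph already forces this much weight.

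For the upper bound I need every $n$-vertex $d$-graph $H$ to admit a $d$-clique partition with each vertex in at most $(\tfrac1{d!}+o_d(1))n^{d-1}/\lg n$ members. The engine is a \emph{local} version of the construction behind \Cref{thm:lupanov}: for a bipartite graph on $X\sqcup Y$ with $|Y| = \omega(\lg|X|)$, partition $Y$ into blocks of size $t = \lg|X| - 2\lg\lg|X|$ and, for each block $Y_a$ and each realised trace $s\subseteq Y_a$, take the biclique $(\{x : N(x)\cap Y_a = s\},\, s)$; this partitions the edges, puts each $x\in X$ in $\le (1+o(1))|Y|/\lg|X|$ bicliques, and puts each $y\in Y$ in $\le 2^{t-1} = O(|X|/\lg^2|X|)$ bicliques. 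For $d = 2$ the plan is to recurse: split $V = V'\sqcup V''$ into halves, recurse on $H[V']$ and $H[V'']$, and cover the crossing edges by a \emph{quadrant trick} — cut $V' = V_1'\sqcup V_2'$ and $V'' = V_1''\sqcup V_2''$ into quarters of $V$, and in each bipartite graph $G[V_i', V_j'']$ apply the construction above treating the $V'$-side as the ungrouped (``large'') side $X$ iff $i = j$. Each vertex lies in exactly two of the four quadrant graphs and is the large side in exactly one, so it picks up $\le(1+o(1))(n/4)/\lg n$ bicliques from that quadrant and $O(n/\lg^2 n)$ from the other — a crossing cost of $(\tfrac14+o(1))n/\lg n$ that is the \emph{same for every vertex}. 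Unrolling $\lmp_2(n) \le (\tfrac14+o(1))n/\lg n + \lmp_2(n/2)$ gives $\lmp_2(n) \le (\tfrac14+o(1))\sum_{i\ge0}(n/2^i)/\lg(n/2^i)$, and the substitution $j = \lg n - i$ rewrites the sum as $\sum_{j\ge1}2^j/j \sim 2n/\lg n$, so $\lmp_2(n)\le(\tfrac12+o(1))n/\lg n$, matching the lower bound.

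For general $d$ the crossing edges of a bisection split according to how many of their $d$ endpoints fall in $V'$, and each $d$-partite piece is handled by the $d$-dimensional version of the quadrant trick ($2^d$ orthants with a balanced design on $\{0,1\}^d$ prescribing, symmetrically over vertices, which of the $d$ coordinates plays the ``large'' role) together with a recursion on the uniformity that reduces a $d$-uniform piece to lower-uniform ones inside each trace-class; the telescoping that gave $\tfrac12$ for $d=2$ then reorganises, with a $d$-dependent per-level constant, into the value $\tfrac1{d!}$, and the whole argument is an induction on $d$ and $n$. The hard part will be calibrating all the parameters so that the leading constant is \emph{exactly} $1/d!$: every naive approach loses a constant factor, e.g. ordering the vertices into blocks $B_1\prec\cdots\prec B_r$ of size $\approx\lg n$ and covering each $G[B_i,\,B_{i+1}\cup\cdots\cup B_r]$ by neighbourhood-grouping gives only $\lmp_2(n)\le(1+o(1))n/\lg n$ (and $(1+o_d(1))n^{d-1}/((d-1)!\lg n)$ in general), a factor $d$ too large, precisely because the vertices of the last block lie in one clique per earlier block — and it is the orthant/checkerboard symmetrisation that equalises this cost over all vertices and recovers the extra factor $d$. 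One must also check that the $o_d(1)$ terms do not accumulate over the $\Theta(\lg n)$ recursion levels, which holds because the last $\omega(1)$ levels (where the Lupanov hypothesis degrades) contribute only $\operatorname{polylog}(n) = o(n^{d-1}/\lg n)$ per vertex. Finally, every ingredient — bisection, blocking, trace-grouping, the bounded recursion on $d$ — is explicit and runs in time linear in the number of output cliques, which is $O(n^d/d!)$ overall, giving the stated algorithmic guarantee.
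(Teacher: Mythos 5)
Your handling of the two middle inequalities and the information-theoretic lower bound matches the paper in substance: your count of biclique covers of weight $\le w$ in $2^{(1+o_d(1))w\lg n}$ is the same argument as the paper's explicit encoding of a cover in $\cover(G)\cdot(\lg n + O(1))$ bits, applied to $d$-graphs with $2^{\binom{n}{d}}$ in place of $2^{\binom{n}{2}}$.

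Your $d=2$ upper bound takes a genuinely different route and does work. The paper never recurses on $n$ for $d=2$: it splits $V$ once into parts of size $\lfloor\lg n - 2\lg\lg n\rfloor$, runs an almost-regular tournament on the $\Theta(n/\lg n)$ parts, and for each part $P_i$ and trace $S\subseteq P_i$ takes the biclique $(S, A(S))$ with $A(S)=\{v: N(v)\cap P_i = S,\ R(g(v),i)\}$; almost-regularity of the tournament is the whole symmetrisation. You instead recursively bisect, symmetrise with the quarter/checkerboard trick at each level, and telescope $\sum_i 2^{-i}/\lg(n2^{-i})$ to recover the same $\tfrac12$. Both give the constant; the paper's one-level version is simpler and its $O(n^2)$ runtime is more immediate, but your recursion is sound given the geometric decay of the levels (your treatment of the error accumulation and the floor on the Lupanov hypothesis is what one would need).

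For general $d$, however, there is a genuine gap, and it is exactly the new content of the theorem. The paper's mechanism is \Cref{lemma:exists-equitable}: partition $V$ into $d-1$ equal parts, and for each $d$-distribution $\vec x = (|e\cap P_1|,\dots,|e\cap P_{d-1}|)$ designate one part $f(\vec x)$ (with $x_{f(\vec x)}\ge 2$) to carry the recursive $x_{f(\vec x)}$-uniform link hypergraph, the other $d-x_{f(\vec x)}$ vertices becoming singleton parts; the selection $f$ is made \emph{equitable} — each part is designated for the same multinomial-weighted mass of distributions — by choosing one representative per cyclic-rotation orbit, which works because $\gcd(d,d-1)=1$ forces every $d$-distribution to be primitive, hence every orbit has size exactly $d-1$. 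This equitability is then fed into a strong double induction on $(d,n)$. Your sketch has no analogue of this gadget. "$2^d$ orthants with a balanced design on $\{0,1\}^d$" is never defined; you do not say how a crossing hyperedge with split $(k,d-k)$ is routed through the orthants, how the recursion on uniformity interleaves with the orthant assignment, or why the per-level crossing cost would come out to $\frac{2^{d-1}-1}{2^{d-1}d!}\cdot n^{d-1}/\lg n$ (which is what your telescoping $\sum_i 2^{-i(d-1)}/\lg(n2^{-i})\sim \frac{2^{d-1}}{(2^{d-1}-1)\lg n}$ requires to land on $1/d!$). You flag this as "the hard part" yourself. Since the lower bound and the $d=2$ case were essentially known or easily adapted, the balancing mechanism for $d\ge 3$ is the theorem's substance, and its absence is a missing idea rather than an omitted detail.
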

Moreover, we show that a $d$-clique partition realizing the upper bound on $\lmp_d(n)$ can be constructed deterministically in time $O(n^d / d!)$. Using Stinson's decomposition theorem~\cite{stinson} as discussed in \cite{ep-hypergraph}, an immediate consequence of our upper bound on $\lmp_d(n)$ is an improved upper bound for the maximum share size for secret sharing schemes with binary secrets on uniform hypergraphs.

\Cref{sec:nechiporuk} is devoted
to biclique partitions that consider the edge density $\gamma := |E(G)|/\binom{|V(G)}{2}$ of the input graph; both for dense graphs $\gamma >\nicefrac{1}{2}$, and sparse graphs $\gamma < \nicefrac{1}{2}$ it is possible to obtain smaller biclique partitions than those implied by~\Cref{thm-ces-optimal}.
In their 1983 paper, Chung, Erd\H{o}s, and Spencer also asked about the quantities $\cover(n, \gamma) := \max \{ \cover(G) \!:\! G \text{ a graph with } |V(G)| = n \ \text{and} \ |E(G)| = \gamma \binom{n}{2}\}$ and the similarly defined $\totalWeight(n, \gamma)$. Unbeknownst to Chung, Erd\H{o}s, and Spencer, Nechiporuk~\cite{nechiporuk} solved the analogous problems for bipartite graphs using a clever generalization of Lupanov's construction. Let $h_2(x) := -x\lg(x) - (1-x)\lg(1-x)$ be the binary entropy function.
\begin{restatable}[Nechiporuk~\cite{nechiporuk}]{theorem}{nechiporuk} \label{thm:nechiporuk_bip}
Let $G = (X \sqcup Y, E)$ be a bipartite graph with $|Y| \ge |X|$, and let $\gamma \in (0, 1)$ be such that $|X| = \omega(\frac{\lg |Y|}{h_2(\gamma)})$, $\max\{\gamma^{-1}, (1-\gamma)^{-1}\} = |Y|^{o(1)}$, and $|E| = \gamma |X| |Y|$. Then, $G$ admits a biclique partition of weight
\(
(1 + o(1)) \cdot h_2(\gamma) \frac{|X| |Y|}{\lg |Y|},
\)
and this is asymptotically best possible.
\end{restatable}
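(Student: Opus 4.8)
The plan is to prove the lower and upper bounds separately: the lower bound is a short counting argument, and the upper bound is Lupanov's block construction with the block size tuned to the density $\gamma$, following Nechiporuk.

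\textbf{Lower bound.} Any biclique partition of a bipartite graph on $X\sqcup Y$ is recorded by its incidence matrix $M$, with a row for each vertex, a column for each biclique, and $M_{v,i}=1$ iff $v$ lies in the $i$-th biclique; the graph is recovered from $M$ since $xy\in E$ iff some column has a $1$ in both row $x$ and row $y$. We may assume every biclique has nonempty sides, so a partition of weight $w'$ uses at most $w'/2$ bicliques and its incidence matrix has exactly $w'$ ones; hence the number of bipartite graphs on $X\sqcup Y$ admitting a biclique partition of weight at most $w$ is at most $\sum_{w'\le w}\binom{(|X|+|Y|)w'/2}{w'}\le w\,(2e\,|Y|)^{w}$. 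Since the number of bipartite graphs on $X\sqcup Y$ with exactly $\gamma|X||Y|$ edges is $\binom{|X||Y|}{\gamma|X||Y|}\ge 2^{(1-o(1))h_2(\gamma)|X||Y|}$ (Stirling, where the hypothesis $|X|=\omega(\lg|Y|/h_2(\gamma))$ kills the Stirling correction and the hypothesis $\gamma^{-1}=|Y|^{o(1)}$ keeps $\lg(2e|Y|)=(1+o(1))\lg|Y|$), comparing logarithms shows that if $w<(1-\delta)h_2(\gamma)\frac{|X||Y|}{\lg|Y|}$ then not every such graph is representable, so some graph with $\gamma|X||Y|$ edges has biclique partition weight at least $(1-\delta)h_2(\gamma)\frac{|X||Y|}{\lg|Y|}$; letting $\delta\to0$ gives the lower bound.

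\textbf{Upper bound.} Write $p=|X|\le q=|Y|$. Randomly permute $X$ and split it into $p/\ell$ consecutive blocks $X_1,\dots,X_{p/\ell}$, where $\ell=(1-o(1))\frac{\lg q}{h_2(\gamma)}$ is chosen (using $\lg(1/\gamma),\lg(1/h_2(\gamma)),\lg\ell=o(\lg q)$, which follow from the hypotheses) so that $2^{(1+o(1))h_2(\gamma)\ell}=o(q/\ell)$. For a block $X_j$ and a set $w\subseteq X_j$ let $B_{j,w}=\{y\in Y:N(y)\cap X_j=w\}$; the nonempty $B_{j,w}$ partition $Y$, and $(w,B_{j,w})$ is a biclique of $G$ whenever it is nonempty (each $x\in w$ is adjacent to each $y\in B_{j,w}$), so the collection of these bicliques over all blocks is a biclique partition of $G$ of weight
\[
  \sum_{j}\Bigl(|Y|+\!\!\sum_{w\,:\,B_{j,w}\ne\emptyset}\!\!|w|\Bigr)
  \;=\;\frac{p\,q}{\ell}\;+\;\sum_{j}\,\sum_{w\,:\,B_{j,w}\ne\emptyset}\!|w|.
\]
The first summand equals $(1+o(1))\,h_2(\gamma)\frac{|X||Y|}{\lg|Y|}$, so it remains to bound the ``pattern--support cost'' $\sum_j\sum_w|w|$ by $o(pq/\ell)$. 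This is where the entropy enters: for a $(1-o(1))$-fraction of the incidences $(y,j)$ the random permutation makes the hypergeometric variable $|N(y)\cap X_j|$ lie within $(1\pm\varepsilon)\gamma\ell$ for a slowly vanishing $\varepsilon=\varepsilon(q)\to0$, so the distinct ``balanced'' patterns $w$ occurring in any block number at most $2^{(h_2(\gamma)+O(\varepsilon\lg(1/\gamma)))\ell}=2^{(1+o(1))h_2(\gamma)\ell}=o(q/\ell)$, and since each has $|w|\le\ell$ they contribute at most $(p/\ell)\cdot o(q/\ell)\cdot\ell=o(pq/\ell)$. The edges at the remaining ``exceptional'' incidences form a subgraph of density $e^{-\Omega(\varepsilon^2\ell/\gamma)}$, which can be partitioned crudely (one biclique per edge), or recursively, at cost $o(pq/\ell)$. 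Since the lower bound matches, the construction is asymptotically optimal; and since a random permutation succeeds with constant probability and can be replaced by a deterministic near-balanced split, the construction is also efficient.

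\textbf{Main obstacle.} The crux is choosing the three slack parameters together: the defect $1-o(1)$ in the block size $\ell$, the concentration radius $\varepsilon$, and the density of the exceptional subgraph must all tend to $0$ at mutually consistent rates, so that $2^{(h_2(\gamma)+O(\varepsilon\lg(1/\gamma)))\ell}$ stays $o(q/\ell)$ while the exceptional part stays negligible. This balancing act is exactly what the technical hypotheses $|X|=\omega(\lg|Y|/h_2(\gamma))$ and $\max\{\gamma^{-1},(1-\gamma)^{-1}\}=|Y|^{o(1)}$ are calibrated for, and it is what forces the leading constant down to $(1+o(1))\,h_2(\gamma)$ rather than a larger multiple of it.
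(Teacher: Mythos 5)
The paper does not actually prove this theorem; it cites it to Nechiporuk and reuses the underlying idea (vertex-adaptive, entropy-balanced slicing) in its own proof of the strictly stronger \Cref{thm:gamma-2}. Your lower bound is a correct counting argument, essentially the same as in \Cref{thm:lower-bound-cover}, and the way you invoke the two hypotheses is right.

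Your upper bound, however, has a genuine gap. You fix a single block length $\ell$, randomly permute $X$, and assert that for a $(1-o(1))$-fraction of incidences $(y,j)$ the weight $|N(y)\cap X_j|$ lies in $(1\pm\varepsilon)\gamma\ell$. But permuting $X$ does nothing to the degree $d(y)$: $|N(y)\cap X_j|$ concentrates around $d(y)\,\ell/|X|$, not around $\gamma\ell$, and when $d(y)/|X|$ is bounded away from $\gamma$ the fraction of $j$ satisfying your bound is $e^{-\Omega(\ell)}$, not $1-o(1)$. Consequently the ``exceptional incidences'' need not be negligible. Concretely, take $|X|=|Y|=n$, split $Y=Y_1\sqcup Y_2$ with $|Y_1|=|Y_2|=n/2$, let each $y\in Y_1$ have an independent uniformly random $(n/2)$-subset of $X$ as its neighborhood and let $Y_2$ be isolated. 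Then $\gamma\approx 1/4$, $h_2(\gamma)\approx 0.81$, $\ell\approx 1.23\lg n$, and $\binom{\ell}{\ell/2}\approx 2^{\ell}=n^{1.23-o(1)}\gg n$; so with high probability the $\approx n/2$ patterns coming from $Y_1$ are pairwise distinct in every block and each has weight $\approx\ell/2$, giving $\sum_j\sum_{w:B_{j,w}\neq\emptyset}|w|=\Theta\bigl((n/\ell)\cdot(n/2)\cdot(\ell/2)\bigr)=\Theta(n^2)$, nowhere near the required $o(n^2/\ell)$. The fix---which is the actual content of Nechiporuk's argument and is what the paper carries out in the proof of \Cref{thm:gamma-2}---is to replace fixed-length blocks by vertex-dependent slices of variable length, greedily cut so that each slice supports only about $n/\partSize^4$ possible patterns; the total number of slices (hence the total weight) is then controlled by the super-additivity inequality of \Cref{remark:choosing}, without any concentration or near-regularity assumption. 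Your fixed-length random-block approach works only for nearly $\gamma$-regular bipartite graphs and does not prove the theorem as stated.
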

By the same reasoning as above, this implies that $(\frac{1}{4} - o(1)) \cdot h_2(\gamma) \frac{n^2}{\lg n} \le \totalWeight(n, \gamma) \le (1 + o(1)) \cdot h_2(\gamma) \frac{n^2}{\lg n}$ for $\max\{\gamma^{-1}, (1-\gamma)^{-1}\} = n^{o(1)}$. Building on Nechiporuk's construction, we solve the problem asymptotically in \Cref{sec:nechiporuk}:
\begin{restatable}{theorem}{sparseces} \label{thm:gamma-2}
    Let $\gamma \in (0, 1)$ be such that $\max\{\gamma^{-1}, (1-\gamma)^{-1}\} = n^{o(1)}$. Then,
    \(
        \totalWeight(n, \gamma) \sim \frac{h_2(\gamma)}{2} \cdot \frac{n^2}{\lg n}.
    \)
    Furthermore, given a graph represented as an adjacency list, a biclique partition realizing the upper bound can be constructed deterministically in time $O(m)$.
\end{restatable}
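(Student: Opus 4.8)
The plan is to reduce the general-graph statement to the bipartite statement of Theorem~\ref{thm:nechiporuk_bip} by the same vertex-splitting trick used to derive Corollary~\ref{cor:general-lupanov}, but now we must be more careful because the density $\gamma$ is not preserved exactly under that reduction, and because we want an asymptotically tight constant $\tfrac{1}{2}$ rather than $1$. For the upper bound, I would first split $V(G)$ into two halves $A$ and $B$ of size $n/2$ and treat the three induced pieces $G[A]$, $G[B]$, and the bipartite graph between $A$ and $B$ separately. The bipartite piece is handled directly by Theorem~\ref{thm:nechiporuk_bip}; each half is handled recursively. As in the CES-style argument, doing this recursion carefully gives a geometric series: splitting into halves of size $n/2$, the bipartite part at the top level has weight $(1+o(1))h_2(\gamma')\frac{(n/2)^2}{\lg(n/2)}$ where $\gamma'$ is the density of the bipartite piece, and the two recursive calls contribute lower-order terms because $(n/2)^{2} \cdot 2 = n^2/2$ and the logarithm barely changes, so the total sums to $(1+o(1))\cdot\tfrac{1}{2}h_2(\gamma)\frac{n^2}{\lg n}$ — this is exactly the factor-$\tfrac12$ improvement over the naive single-application bound, mirroring how Theorem~\ref{thm-ces-optimal} improves Corollary~\ref{cor:general-lupanov}. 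The subtlety I expect to be the main obstacle is density bookkeeping: when we partition $V(G)$ the densities of $G[A]$, $G[B]$, and the bipartite piece need not all equal $\gamma$; I would argue that one can choose the bipartition (e.g.\ randomly, then derandomize, or by a greedy balancing argument) so that all three densities are $\gamma + o(1)$, and then use continuity of $h_2$ together with the condition $\max\{\gamma^{-1},(1-\gamma)^{-1}\}=n^{o(1)}$ to absorb the perturbation into the $o(1)$ factor. One must also check the hypotheses of Theorem~\ref{thm:nechiporuk_bip} remain satisfied down the recursion (the part sizes stay $\omega(\lg n / h_2(\gamma))$ until the recursion bottoms out at polylogarithmic size, at which point the remaining graph has only $O(\mathrm{polylog}(n))$ vertices and contributes a negligible $O(\mathrm{polylog}(n)^2)$ to the weight).

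For the lower bound $\totalWeight(n,\gamma)\ge(\tfrac12-o(1))h_2(\gamma)\frac{n^2}{\lg n}$, I would use an information-theoretic / counting argument: a biclique partition of weight $W$ can be encoded using roughly $W\lg n$ bits (each biclique is specified by listing its $O(1)\cdot|V(B_i)|$ vertices, each an index in $[n]$, plus negligible overhead), while the number of $n$-vertex graphs with exactly $\gamma\binom n2$ edges is $\binom{\binom n2}{\gamma\binom n2} = 2^{(1+o(1))h_2(\gamma)\binom n2}$. Since distinct graphs require distinct encodings, $W\lg n \ge (1+o(1))h_2(\gamma)\binom n2 = (1+o(1))\tfrac{h_2(\gamma)}{2}n^2$, giving $W\ge(\tfrac12-o(1))h_2(\gamma)\frac{n^2}{\lg n}$. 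Here I would need to be slightly careful that the "weight'' of a biclique partition is $\sum|V(B_i)|$ (counting each biclique's vertex set, not its edge set), so that the encoding length is indeed $\Theta(W\lg n)$; listing a biclique $B_i=(P,Q)$ takes $(|P|+|Q|)\lg n + O(\lg W)$ bits and $|P|+|Q|=|V(B_i)|$, and the number of bicliques is at most $W$, so the $O(\lg W)$ terms sum to $O(W\lg W) = o(W\lg n \cdot \text{?})$ — actually this needs $W = n^{O(1)}$, which holds since $W = O(n^2)$, so $\lg W = O(\lg n)$ and we only lose a constant; to get the sharp constant one instead bounds the number of bicliques by $O(n^2)$ and notes $\sum \lg(|V(B_i)|) \le W$. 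Combining the two bounds yields $\totalWeight(n,\gamma)\sim\tfrac{h_2(\gamma)}{2}\cdot\frac{n^2}{\lg n}$.

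Finally, for the algorithmic claim, I would track the running time through the recursion. Theorem~\ref{thm:nechiporuk_bip}'s construction, as invoked in Section~\ref{sec:nechiporuk}, should itself be implementable in time linear in the number of edges of the bipartite piece (this is presumably established there as part of proving the bipartite case is algorithmic — I would cite that); the recursion on the two halves reads each edge exactly once per level, and since the instance sizes geometrically decrease, the total work is $O(m \log n)$ naively, or $O(m)$ if at each level we only pay for the edges actually present (each edge of $G$ is "consumed'' by exactly one bipartite sub-instance across the whole recursion, because once an edge goes between the two halves it is removed and never revisited). The adjacency-list representation lets us partition the edge lists between $A$ and $B$ in time proportional to the number of edges touched, so a careful accounting gives total time $O(m)$. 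The main obstacle on the algorithmic side is ensuring the bipartition at each recursive level can be computed in $O(m_{\text{level}})$ time while still achieving density $\gamma+o(1)$ on all three pieces; a random bipartition achieves this in expectation and can be made to work with high probability, or one can derandomize via a greedy/pessimistic-estimator choice that runs in linear time, and I would spell out whichever is cleanest.
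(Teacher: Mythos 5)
Your recursive halving plan — split $V$ into halves, apply \Cref{thm:nechiporuk_bip} to the cross-cut, recurse on the halves, and extract the factor $\tfrac12$ from the resulting geometric series — is a genuinely different route from the paper's. The paper uses a \emph{flat} single-level construction: partition $V(G)$ into parts of size $r\approx\lg^2 n/h_2(\gamma)$, orient a tournament on the part indices, and for each vertex $v$ carve its neighborhood in each part that $v$'s part beats into slices of roughly equal entropy (each slice chosen so its binomial weight is just below $n/r^4$). The analytic engine there is \Cref{remark:choosing} applied once globally, and the tournament, rather than a geometric series, supplies the factor $\tfrac12$. This flatness is what makes the $O(m)$ runtime a direct bookkeeping exercise and is also what lets the same template specialize to the density-aware Erd\H{o}s--Pyber statement (\Cref{thm-density-ep}).

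Your route can probably be made to work, but as written there are real gaps. The density-balancing gambit is not the right tool: rather than trying to force $G[A]$, $G[B]$, and the cut to each have density $\gamma+o(1)$ (which would compound delicately over $\Theta(\lg n)$ levels), use concavity of $h_2$. Each pair of vertices is separated at exactly one level of the recursion (or stays together in a leaf), so if $N_{k,j}$ and $e_{k,j}$ denote the potential and actual edge counts of the $j$-th cross-cut at depth $k$, then $\sum_{k,j}N_{k,j}\le\binom n2$ and $\sum_{k,j}e_{k,j}\le|E|$, and Jensen gives $\sum_{k,j}N_{k,j}\,h_2(e_{k,j}/N_{k,j})\le\binom n2\,h_2(\gamma)$ with no density control at all; this plays the role \Cref{remark:choosing} plays in the paper. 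Second, ``the two recursive calls contribute lower-order terms'' is wrong: the depth-$1$ calls contribute the same order as the top cut, and the $\tfrac12$ really comes from $\sum_{k\ge0}2^{-k}/(\lg n-k)\sim 2/\lg n$. Third, the termination analysis is miscalibrated: you must stop when instances reach size $\Theta(\lg n/h_2(\gamma))$ — for $\max\{\gamma^{-1},(1-\gamma)^{-1}\}=n^{o(1)}$ this can be far above polylogarithmic — and there are then $\Theta(nh_2(\gamma)/\lg n)$ leaves, not one; their total contribution is $O(n\lg n/h_2(\gamma))=o(h_2(\gamma)n^2/\lg n)$, but your ``$O(\mathrm{polylog}(n)^2)$'' estimate does not establish this. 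Finally, the algorithmic claim cannot lean on a black-box $O(m)$ bipartite Nechiporuk algorithm: the paper asserts this without proof and instead proves its \emph{own} construction runs in $O(m)$, which is part of the content of the theorem; in your route you would have to do likewise, and also account for the $O(n\lg n)$ bipartitioning work across $\Theta(\lg n)$ levels (fine, since $m=n^{2-o(1)}$ under the hypothesis, but it must be said). Your information-theoretic lower bound is essentially the paper's.
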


In \Cref{sec-density-ep}, we also prove a variant of this result for graphs with bounds on the degrees of each vertex, which can be seen as blending \Cref{thm:gamma-2} with the Erd\H{o}s--Pyber theorem. Specifically, we show that if $G$ is a regular graph with edge density $\gamma$, then $\lbp(G) \le (\frac{1}{2} + o(1)) \cdot h_2(\gamma) \frac{n}{\lg n}$; the actual result we prove, \Cref{thm-density-ep}, is a bit stronger than this.

A decomposition of a graph into bicliques can be fruitfully viewed as a compressed representation of the graph, and the above theorems provide theoretical guarantees on the quality of such a compression scheme. Feder and Motwani's seminal work showed that the runtime of several graph algorithms---including those for matchings, vertex- and edge-connectivity, and shortest paths---can be asymptotically improved by operating on the compressed representation of a graph~\cite{feder-motwani}. The effectiveness of this approach depends on two parameters: \emph{runtime} (i.e., the complexity of computing a biclique partition) and \emph{quality} (i.e., the weight of the resulting partition). Feder and Motwani developed a deterministic polynomial-time algorithm for constructing biclique partitions of weight $O(h_2(\gamma) n^2 / \lg n)$, and similarly, Mubayi and Turán~\cite{mubayiFindingBipartiteSubgraphs2010} independently gave a deterministic polynomial-time algorithm for constructing biclique partitions of weight $O(n^2 / \lg n)$. The algorithm of Feder and Motwani was recently improved by Chavan, Rabinia, Grosu, and Brocanelli~\cite{chavan2025}. In all three cases, the runtime is $\omega(n^2)$ and depends on the desired weight of the biclique partition. While each algorithm produces a biclique partition of weight $O(n^2/\lg(n))$, none produces a biclique decomposition with the strong quantitative bounds given by the CES theorem or Nechiporuk's theorem. It seems to have been hitherto unnoticed that Lupanov's proof of \Cref{thm:lupanov} yields a deterministic $O(n^2)$ algorithm for constructing a biclique partition of weight $O(n^2/\lg(n))$, one that is much simpler than the previous algorithms and achieves the upper bound stated in \Cref{cor:general-lupanov}; similarly, Nechiporuk's proof of \Cref{thm:nechiporuk_bip} yields a deterministic $O(m)$ algorithm for constructing a biclique partition of weight $O(h_2(\gamma) n^2 / \lg n)$, which answers a question from Feder and Motwani's paper. Our proofs of \Cref{thm-ces-optimal,thm:gamma-2} similarly yield deterministic $O(n^2)$ and $O(m)$ algorithms respectively producing a biclique partition with the optimal bound, improving on all of the previous results and achieving optimality both with respect to runtime and quality.

Building on the idea of using biclique partitions as a form of graph compression, in~\Cref{sec:representations} we further investigate on their usage as succinct or compact data structures; a data structure for a class of objects $U$ is said to be \emph{succinct} if it uses $(1+o(1))\lg(U)$ bits of space and \emph{compact} if it uses $O(\lg(U))$ bits. Since the number of graphs on $n$ vertices with density $\gamma$ is $\binom{\binom{n}{2}}{\gamma \binom{n}{2}}$ and $\lg \binom{\binom{n}{2}}{\gamma \binom{n}{2}} \sim \frac{h_2(\gamma) n^2}{2}$, it follows that a representation for graphs is succinct if it uses $\frac{h_2(\gamma) n^2}{2}$ bits, which is precisely what~\Cref{thm:gamma-2} allows us to do given that a vertex can be represented using $\lg n$ bits. This result gives theoretical support to more applied work in graph compression~\cite{Francisco2022,Hernndez2013}. Furthermore, we show that if we store graphs in succinct space as biclique partitions, certain natural graph queries can be answered faster than using a (compressed) adjacency matrix. Namely, we show that given a subset $S$ of the vertices, we can decide if it is an independent set in time $O(n^2/\lg n)$, as opposed to the naive $\Theta(n^2)$ algorithm. While better asymptotic algorithms exist for this task~\cite{williamsSubquadratic, RegLemmaAlgorithms, subcubicEquiv}, they require $\omega(n^2)$ preprocessing (in fact, the two latter algorithms use Feder and Motwani's algorithm as a subroutine) and use $\omega(n^2)$ space. We also show that the succinct biclique representation allows to answer \emph{cut queries} in time $O(n^2/\lg n)$. In a cut query the input consists of two disjoint subsets of vertices $S$ and $T$, and the output shall be the number of edges between $S$ and $T$. The main result of~\Cref{sec:representations} is that, by maintaining `back references' from each vertex to the list of bicliques it belongs to, which leads to a compact data structure (\emph{CB representation}), we can approximate the densest subgraph problem (i.e., find nonempty $S \subseteq V$ that maximizes $|E(G[S])|/|S|$) in subquadratic time:
\begin{restatable}{theorem}{charikar} \label{thm:charikar-subq}
    Given a CB representation for an $n$-vertex graph $G$, where the biclique partition has weight $O(n^2/\lg n)$, and any $\alpha > 1$, one can compute a $2\alpha$-approximation for the densest subgraph problem in time
    $O(n^2 / \lg \alpha)$.
\end{restatable}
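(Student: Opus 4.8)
The plan is to recall Charikar's classical greedy peeling algorithm and observe that its only interaction with the graph is through iterated degree computations in vertex-induced subgraphs, which the CB representation supports efficiently. Recall that Charikar's algorithm maintains a vertex set $S$, initialized to $V$, and repeatedly removes a vertex of minimum degree in $G[S]$ until $S$ is empty; among all the intermediate sets $S$ produced, it outputs the one maximizing $|E(G[S])|/|S|$. This is a $2$-approximation for the densest subgraph. Our modification is to batch the peeling: instead of removing a single minimum-degree vertex at a time, in each \emph{round} we remove all vertices whose current degree is at most $\alpha$ times the current average degree (equivalently, below some threshold proportional to $|E(G[S])|/|S|$). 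A standard argument shows this still yields a $2\alpha$-approximation, and it reduces the number of rounds to $O(\log_\alpha n) = O(\lg n / \lg \alpha)$, since each round removes a constant fraction (depending on $\alpha$) of the remaining vertices or the remaining edge count drops substantially.

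The second ingredient is to show that, given the CB representation, a single round can be executed in time $O(n^2/\lg n)$. The key subroutine is: given the current active set $S \subseteq V$, compute $\deg_{G[S]}(v)$ for every $v \in S$. Using the back references, for each biclique $B = (A, B')$ in the partition we can intersect its two sides with $S$ in time $O(|V(B)|)$, obtaining $|A \cap S|$ and $|B' \cap S|$; then each vertex $u \in A \cap S$ gains $|B' \cap S|$ towards its degree (and symmetrically). Summing the degree contributions over all bicliques costs $O(\sum_B |V(B)|) = O(n^2/\lg n)$ time total, since the partition has weight $O(n^2/\lg n)$; we also maintain $|E(G[S])|$ as half the degree sum. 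Here it is important that the bicliques form a \emph{partition} (not merely a cover), so that summing contributions counts each edge of $G[S]$ exactly once. Determining which vertices fall below the threshold and updating $S$ is then $O(n)$. Over $O(\lg n/\lg\alpha)$ rounds this gives total time $O((n^2/\lg n)\cdot(\lg n/\lg\alpha)) = O(n^2/\lg\alpha)$, and tracking the best density ratio across rounds costs nothing extra.

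The main obstacle is the approximation-guarantee analysis of the batched peeling: one must argue that removing \emph{all} vertices below the relaxed threshold in one shot does not destroy the densest subgraph by more than the claimed factor. The cleanest route is the LP-duality / ``locally dense'' viewpoint underlying Charikar's proof: if $S^\star$ is an optimal densest subgraph with density $\rho^\star$, then every vertex of $S^\star$ has degree at least $\rho^\star$ in $G[S^\star]$, hence at least $\rho^\star$ in $G[S]$ as long as $S \supseteq S^\star$; so as long as our threshold in each round stays strictly below $\rho^\star$, no vertex of $S^\star$ is ever peeled, and when we reach the round in which $S = S^\star$ (or a superset whose density we have already recorded) the average-degree-based bound yields a recorded set of density at least $\rho^\star/(2\alpha)$. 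Making the threshold comparison precise — so that the round complexity bound and the never-peel-$S^\star$ invariant hold simultaneously — is the delicate point, but it is a routine adaptation of the standard argument once the threshold is chosen as, say, $(1-\varepsilon)\alpha$ times the current average degree with $\varepsilon$ absorbed into the $O(\cdot)$.
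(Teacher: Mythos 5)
Your approach is sound but genuinely differs from the paper's in the choice of peeling threshold. The paper uses an \emph{absolute} geometric schedule: in round $i$ it removes every vertex whose current degree is below $\alpha^i$. The round bound $O(\lg_\alpha n)$ is then immediate (degrees are at most $n-1$), and the $2\alpha$-approximation follows by letting $i^\star$ satisfy $\alpha^{i^\star}\le\delta^\star<\alpha^{i^\star+1}$ and observing that $S^\star$ survives round $i^\star$, so $H_{i^\star+1}$ is a nonempty subgraph of minimum degree at least $\alpha^{i^\star}$, hence by the handshake lemma its density is at least $\alpha^{i^\star}/2>\delta^\star/(2\alpha)$. You instead use the \emph{adaptive} threshold of $\alpha$ times the current average degree (the Bahmani--Kumar--Vassilvitskii rule); the round bound then follows from Markov's inequality (each round shrinks $|S|$ by a factor $\alpha$), and correctness must reason about the first round at which this moving threshold reaches $\delta^\star$. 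Both schemes yield $O(\lg n/\lg\alpha)$ rounds and $O(n^2/\lg\alpha)$ total time; the paper's fixed schedule buys a one-line round bound and a slightly cleaner correctness proof, while yours is the standard adaptive rule from the densest-subgraph literature.

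However, the correctness sketch as written would not survive formalization. The phrase ``when we reach the round in which $S = S^\star$'' asserts behaviour the algorithm need not exhibit: the active set may never equal $S^\star$, and the threshold must eventually exceed $\delta^\star$ since $|S|$ goes to zero. The argument that actually closes the gap is: let $r$ be the first round whose threshold is at least $\delta^\star$ (such a round exists, else by induction $S^\star\subseteq S_r$ for all $r$ and $S$ never empties). For all earlier rounds the threshold is below $\delta^\star$, so no vertex of $S^\star$ is peeled and $S^\star\subseteq S_r$; then the threshold condition directly gives $\delta(S_r)\ge\delta^\star/(2\alpha)$, which is one of the recorded densities. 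The $(1-\varepsilon)$ slack you introduce is not needed. One last remark: your per-round subroutine --- a single pass over the biclique list, intersecting each side with $S$ via an $O(n)$-size bitvector --- in fact only requires the SB representation, not the CB back references, which is a small simplification; the paper's variant walks each vertex's back-reference list to answer its degree query and maintains live part sizes via lazy decrements on deletion. Both give $O(w(\mathcal{B}))=O(n^2/\lg n)$ per round.
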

While the densest subgraph problem is polynomial-time solvable~\cite{Charikar2000}, in practice very fast approximation algorithms are used~\cite{surveyDensest}; ours is, to the best of our knowledge, the first one with a subquadratic runtime (e.g., \Cref{thm:charikar-subq} yields a $2\lg \lg n$-approximation in time $O(n^2/\lg\lg\lg n) = o(n^2)$).

\Cref{sec:large_bb} is dedicated to algorithms for finding large balanced bicliques. Cardinal and Yuditsky~\cite{cardinal_et_al:LIPIcs.ESA.2025.67} observed that, as a corollary of the existence of biclique
partitions of weight $O(n^2/\lg n)$,
every graph with $\Omega(n^2)$ edges contains a biclique $K_{t,t}$ with $t = \Omega(\lg n)$ (see \Cref{lemma:e-bp}). 
This is an instance of the celebrated K\H{o}vári--S\'os--Turán (KST) theorem~\cite{Kovari1954} (see \Cref{thm:precise-KST}). In fact, the proof of the CES theorem (Thm.~\ref{thm:CES}), and the similar proofs given by Tuza~\cite{Tuza1984} and Bublitz~\cite{Bublitz1986}, are based on repeated applications of the KST theorem to peel off large bicliques from a graph until the edge density is sufficiently small. Efficiently finding large bicliques in a sufficiently dense graph plays a crucial role in the algorithms of Feder and Motwani~\cite{feder-motwani}, Mubayi and Turán~\cite{mubayiFindingBipartiteSubgraphs2010}, and Chavan, Rabinia, Grosu, and Brocanelli~\cite{chavan2025}. Feder and Motwani's algorithm implies that in a graph with $\Omega(n^2)$ edges, a biclique $K_{t,t}$ with $t = \Omega(\lg n)$ can be found deterministically in time $O(n^2 \log n)$, while Chavan, Rabinia, Grosu, and Brocanelli's work improves this to $O(n^2)$. Mubayi and Turán's algorithm gives a stronger quantitative guarantee: they give a deterministic $O(m)$ that finds a biclique $K_{t,t}$ with $t = (\frac{1}{5 \lg(4 \cdot e / \edgeDensity)} - o(1)) \lg(n)$ in a graph with edge density $\edgeDensity$; they also give a deterministic $O(n^{2.42})$ that finds a biclique $K_{t,t}$ with $t = (\frac{1}{\lg(4 \cdot e / \edgeDensity)} - o(1)) \lg(n)$. In fact, their first algorithm can be made to run in time $O(n^{1.42})$ in a model in which constant-time degree queries are allowed. As a corollary of our constructive proof of \Cref{thm:gamma-2}, we obtain a deterministic $O(m)$ algorithm that finds a biclique $K_{t,t}$ with $t = (1 - o(1)) \frac{\gamma \lg(n)}{h_2(\gamma)}$, which is better than Mubayi and Turán's algorithm. In fact, a more refined analysis allows us to show the following:
\begin{restatable}{theorem}{randalg} \label{thm:rand-alg}
   Given a graph with edge density $\edgeDensity$ such that $\max\{\edgeDensity^{-1}, (1-\gamma)^{-1}\} = n^{o(1)}$, there is a randomized $O( \frac{n \lg n}{h_2(\gamma)}) = n^{1 + o(1)}$ time algorithm that returns a biclique $K_{t,t}$ with $t = (1-o(1)) \frac{\gamma \lg n}{h_2(\edgeDensity)}$ with high probability. Furthermore, if we are allowed to query the degree of a vertex in $O(1)$ time, the algorithm can be made deterministic.
\end{restatable}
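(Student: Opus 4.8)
The plan is to build on the constructive proof of \Cref{thm:gamma-2}, and in particular on its $O(m)$ corollary for balanced bicliques. That corollary works by building the full Nechiporuk-type biclique partition of $G$ and then returning the biclique of maximum \emph{efficiency} $|A||P|/(|A|+|P|)$: since the partition has $m = \gamma\binom{n}{2}$ edges, i.e.\ $\sum |A||P| = m$, and total weight $W = (1+o(1))\tfrac{h_2(\gamma)}{2}\cdot\tfrac{n^2}{\lg n}$, i.e.\ $\sum(|A|+|P|) = W$, the mediant inequality produces a biclique of efficiency at least $m/W = (1-o(1))\tfrac{\gamma\lg n}{h_2(\gamma)}$, and since $|A||P|/(|A|+|P|)\le\min(|A|,|P|)$ its smaller side is a balanced biclique of the claimed size. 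To make this subquadratic, instead of building the whole partition I would restrict Nechiporuk's construction to a single random \emph{group}: sample a uniformly random set $B\subseteq V$ of size $k=(1-o(1))\tfrac{\lg n}{h_2(\gamma)}$ (tuned so that $2^{(1+o(1))h_2(\gamma)k}=n^{1-o(1)}$), and consider the bicliques $(A_P,P)$ where $P$ ranges over the realized \emph{patterns} $P=N(v)\cap B$ and $A_P=\{v\notin B:N(v)\cap B=P\}$; each such $(A_P,P)$ is a genuine biclique of $G$, and $A_P$ is automatically disjoint from $P\subseteq B$ because $G$ has no loops.

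Computationally, I would form all patterns in a single pass over the adjacency lists of the vertices of $B$, setting the bit of $P_v$ indexed by $y$ whenever $v\in N(y)$; this costs $O\bigl(n+\sum_{y\in B}\deg(y)\bigr)$, which concentrates around $O\bigl(\tfrac{\gamma n\lg n}{h_2(\gamma)}\bigr)=O\bigl(\tfrac{n\lg n}{h_2(\gamma)}\bigr)$ (and if the sampled $B$ happens to make this larger by a constant factor, resample). Then bucket the vertices by pattern, compute the efficiency of each $(A_P,P)$, and output $t$ vertices from each side of the most efficient one, with $t=(1-o(1))\tfrac{\gamma\lg n}{h_2(\gamma)}$.

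For correctness I would show that, with high probability over $B$, this family of bicliques again admits a good mediant bound. The edges it captures are exactly $e(V\setminus B,B)=\sum_{y\in B}\deg(y)-2e(B)$, whose expectation is $(1-o(1))\gamma kn$; its variance divided by the square of its mean is $O(1/(\gamma k))=o(1)$ (since $\gamma k\to\infty$ in the regime $\gamma^{-1}=n^{o(1)}$), so a Chebyshev bound gives $e(V\setminus B,B)=(1\pm o(1))\gamma kn$ whp. Its total weight is $|V\setminus B|+\sum_{\text{distinct }P}|P|$, and the key point is pattern concentration: with high probability all but $n^{1-o(1)}$ vertices have $|N(v)\cap B|=(1\pm o(1))\gamma k$ (Chernoff for the hypergeometric distribution), so there are only $2^{(1+o(1))h_2(\gamma)k}=n^{1-o(1)}$ ``typical'' patterns and the remaining ``atypical'' vertices contribute only a lower-order $o(n)$ to the weight. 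Hence the weight is $(1+o(1))n$, the most efficient biclique has efficiency $\ge(1-o(1))\gamma kn/((1+o(1))n)=(1-o(1))\gamma k$, and its smaller side gives the desired $K_{t,t}$.

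The main obstacle is that pattern concentration fails for highly irregular density-$\gamma$ graphs, so the first step must be a regularization: via a dyadic bucketing of the vertices by degree together with the classical ``min-degree $\ge m/n$'' subgraph argument (passing to a bipartite structure when the heavy degree-bucket pair is off-diagonal), I would reduce to a (bi)regular subgraph on $n'=n^{1-o(1)}$ vertices whose density $\gamma'$ satisfies $\tfrac{\gamma'\lg n'}{h_2(\gamma')}\ge(1-o(1))\tfrac{\gamma\lg n}{h_2(\gamma)}$ --- this uses that $x\mapsto x/h_2(x)$ is increasing and $h_2$ is concave, so shrinking the vertex set to $n^{1-o(1)}$ while not decreasing the density loses nothing. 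The delicate parts are (i) carrying out this reduction while keeping the leading constant at exactly $(1-o(1))\tfrac{\gamma\lg n}{h_2(\gamma)}$, (ii) tuning $k$ and the concentration windows as functions of $\gamma$ and $n$ so that the running time is $O\bigl(\tfrac{n\lg n}{h_2(\gamma)}\bigr)$ rather than merely $n^{1+o(1)}$, and (iii) the case $\gamma>1/2$, where one instead exploits the near-complete structure directly. Finally, in the model with $O(1)$-time degree queries the only uses of randomness are the degree estimates driving the regularization and the choice of $B$; the former is replaced by exact queries and the latter is derandomized (for a regular graph a sufficiently spread deterministic $B$ works, chosen greedily by the method of conditional expectations on the ``weight'' and ``captured-edges'' potentials), which yields the deterministic version.
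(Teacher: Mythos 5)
Your approach diverges from the paper's in a substantive way, and the divergence introduces a genuine gap that you flag yourself but do not close.

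The paper never samples a uniformly random $B$ and never needs pattern concentration or a regularization step. Instead, it lets $D$ be the $r$ \emph{highest-degree} vertices (or a random set of near-highest-degree vertices in the randomized model), defines $V^\star$ to be the vertices of $V \setminus D$ with at least $(1-\varepsilon)^2\gamma|D|$ neighbors in $D$ --- shown to be large by a simple averaging argument --- and then, for each $v \in V^\star$, picks an arbitrary subset $f(v) \subseteq N(v) \cap D$ of the \emph{fixed} size $\lfloor(1-\varepsilon)^2\gamma|D|\rfloor$. Pigeonhole over the $\binom{|D|}{\lfloor(1-\varepsilon)^2\gamma|D|\rfloor} \approx 2^{h_2(\gamma)|D|}$ possible fixed-size subsets immediately produces a $B$ of the claimed size with $f(v) = f(w)$ for all $v,w \in B$. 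The point of truncating to a fixed-size subset rather than taking the full pattern $N(v)\cap D$ is precisely that it makes the argument oblivious to degree irregularity: no concentration hypothesis on $|N(v)\cap D|$ is needed, because $V^\star$ is defined by a one-sided degree threshold and the subsets all have the same size by fiat.

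In your version, taking the full pattern $P_v = N(v)\cap B$ forces you to control the \emph{distribution} of $|P_v|$, which is exactly what fails for irregular $\gamma$-dense graphs; you correctly identify this as ``the main obstacle'' and propose a dyadic-degree-bucket regularization to a (bi)regular subgraph on $n^{1-o(1)}$ vertices whose density does not decrease. But this reduction is asserted, not proved, and it is not clear that it is true at the required precision: you need $\gamma'\lg n' / h_2(\gamma') \ge (1-o(1))\gamma\lg n / h_2(\gamma)$, and restricting to a degree bucket can genuinely change the density; the ``min-degree $\ge m/n$'' peeling gives a min-degree bound, not (bi)regularity; and after two or three such passes, the constant $(1-o(1))$ in front of $\gamma\lg n/h_2(\gamma)$ is in real danger because the relevant $o(1)$ windows (which you leave as ``delicate parts (i)--(iii)'') must be shown to compose favorably. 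There is also a second-order issue with your mediant bound: you need $\sum_{\text{distinct }P}|P| = o(n)$, which requires the number of typical patterns to be $n^{1-\Omega(\lg\lg n/\lg n)}$ rather than the looser $n^{1-o(1)}$ you state; the paper achieves this by building a concrete $\lg\lg n$ deficit into the definition of $r$.

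Finally, your derandomization (method of conditional expectations on a ``weight'' and ``captured-edges'' potential, assuming the graph has already been regularized) is both unproved and considerably heavier than what is needed. With $O(1)$ degree queries the paper's $D$ is computed deterministically in $O(n\lg n)$ by sorting on degree, full stop. If you replace the random-$B$/pattern-concentration machinery with the fixed-size-subset pigeonhole on a high-degree $D$, the regularization step, the $\gamma>1/2$ special case, and the derandomization difficulty all disappear at once.
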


Finally, in \Cref{sec:shattering}, we show that our construction used to prove that $\lbp(n) \leq (\frac{1}{2}+o(1)) \frac{n}{\lg n}$ can be naturally adapted to prove stronger bounds for graphs with \emph{polynomially bounded shattering}, a notion closely related to VC-dimension. This result strengthens and generalizes bounds on $\cover(G)$ and $\totalWeight(G)$ for $d$-dimensional semi-algebraic graphs with bounded complexity~\cite{Do-2019,Agarwal-Aronov-Ezra-Katz-Sharir-2025,cardinal_et_al:LIPIcs.ESA.2025.67}, a family of graphs that includes many intersection graphs of geometric objects. As discussed in those references, biclique covers for such graphs have numerous applications in computational geometry, such as range searching and intersection queries for sets of geometric objects in $\mathbb{R}^d$.

\definecolor{theoboxTitle}{RGB}{140, 200, 80}   
\definecolor{theoboxBody}{HTML}{F2F5FA}    
\definecolor{theoboxFrame}{HTML}{12344D}   
\definecolor{theoboxTitleText}{HTML}{FFFFFF}
\definecolor{theoboxText}{HTML}{0B1620}

\definecolor{hypergraphTitle}{RGB}{80, 140, 200}


\tikzset{
  theobox/.style={
    rectangle split,
    rectangle split parts=2,
    rectangle split part fill={theoboxTitle,theoboxBody},
    draw=theoboxFrame,
    rounded corners=2pt,
    line width=0.6pt,
    align=left,
    inner sep=6pt,
  },
  theowidth/.style={text width=#1}, 
  theoarrow/.style={-{Latex}, line width=0.6pt},
}

\newcommand{\thmBox}[5]{
    \node[draw, font=\footnotesize, rectangle split,
     rectangle split parts=2, rounded corners=1pt, rectangle split part fill={#5,theoboxBody}, align=center] (#1) at #2 {\color{theoboxTitleText} \hypersetup{linkcolor=white} #3  \nodepart{second} #4
    };
}


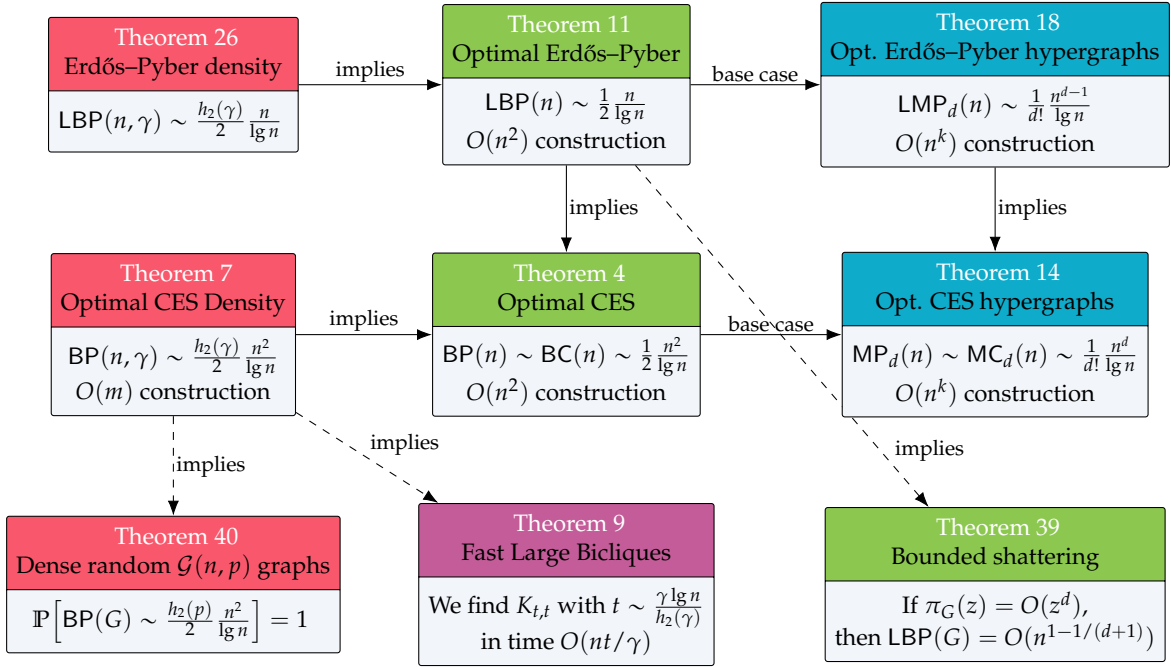
\begin{figure}
    \centering
  \begin{tikzpicture}[scale=0.95,
  >={Latex[length=2mm]},
  box/.style={draw, rounded corners=2pt, align=center, font=\small, inner sep=2.5pt, fill=gray!10},
  main/.style={box, thick, fill=blue!6},
  aux/.style={box, fill=green!5},
  app/.style={box, fill=orange!12},
  hyp/.style={box, fill=purple!10},
  barrier/.style={box, dashed, fill=red!6},
  every edge quotes/.style={auto, font=\scriptsize, inner sep=1pt}
]

\thmBox{OEP}{(0.5,0)}{\Cref{thm:ep} \\ Optimal Erd\H{o}s--Pyber}{$\lbp(n) \sim \frac{1}{2}\tfrac{n}{\lg n}$ \\
$O(n^2)$ construction}{theoboxTitle};

\thmBox{OCES}{(0.5,-3.5)}{\Cref{thm-ces-optimal} \\ Optimal CES}{$\totalWeight(n) \sim \cover(n) \sim \frac{1}{2}\tfrac{n^2}{\lg n}$ \\
$O(n^2)$ construction}{theoboxTitle};

\draw[->] (OEP) edge["implies"] (OCES);

\thmBox{OEPH}{(6.5, 0)}{\Cref{theorem:ep-hypergraph-upper} \\ Opt. Erd\H{o}s--Pyber hypergraphs}{$\lmp_d(n) \sim \frac{1}{d!}\tfrac{n^{d-1}}{\lg n}$ \\
$O(n^k)$ construction}{\colA};

\thmBox{boundedShattering}{(6.5, -7)}{\Cref{lemma:boundedshatter} \\ Bounded shattering}{If $\pi_G(z) = O(z^d)$,\\ then $\lbp(G) = O(n^{1-1/(d+1)})$}
{theoboxTitle};

\draw[->, dashed] (OEP) edge["implies", very near end] (boundedShattering);

\thmBox{CESH}{(6.5, -3.5)}{\Cref{theorem:ces-hypergraph} \\ Opt. CES hypergraphs}{$\mp_d(n) \sim \mc_d(n) \sim \frac{1}{d!}\tfrac{n^{d}}{\lg n}$ \\
$O(n^k)$ construction}{\colA};

\draw[->] (OEPH) edge["implies"] (CESH);

\draw[->] (OCES) edge["base case"] (CESH);

\draw[->] (OEP) edge["base case"] (OEPH);

\thmBox{DCES}{(-5, -3.5)}{\Cref{thm:gamma-2} \\ Optimal CES Density}{$\totalWeight(n, \gamma) \sim \tfrac{h_2(\gamma)}{2} \tfrac{n^2}{\lg n}$ \\
$O(m)$ construction}{\colB};

\thmBox{DEP}{(-5, 0)}{\Cref{thm-density-ep} \\ Erd\H{o}s--Pyber density}{$\lbp(n, \gamma) \sim \tfrac{h_2(\gamma)}{2} \tfrac{n}{\lg n}$}{\colB};

\draw[->] (DEP) edge["implies"] (OEP);


\thmBox{Prob}{(-5.0, -7)}{\Cref{thm:dense-random-graphs} \\ Dense random $\mathcal{G}(n, p)$ graphs}{$\prob\left[\totalWeight(G) \sim \frac{h_2(p)}{2}\frac{n^2}{\lg n}\right] = 1$}{\colB};

\draw[->] (DCES) edge["implies"] (OCES);

\draw[->, dashed] (DCES) edge["implies"] (Prob);


\thmBox{LargeBiclique}{(0.5, -7)}{\Cref{thm:rand-alg} \\ Fast Large Bicliques}{We find $K_{t,t}$ with $t \sim \frac{\gamma \lg n}{h_2(\gamma)}$\\ in  time $O(n t/\gamma)$}{purple!60!white!90!blue};

\draw[->, dashed] (DCES) edge["implies"] (LargeBiclique);




    \end{tikzpicture}
    \caption{Summary of our results. Dashed implications represent that other tools are also required. (\textcolor{\colB}{$\blacksquare$} := density-aware, \textcolor{theoboxTitle}{$\blacksquare$} := graph partitions, \textcolor{\colA}{$\blacksquare$} := hypergraph partitions, \textcolor{purple!60!white!90!blue}{$\blacksquare$} := finding bicliques) }
    \label{fig:summary-results}
\end{figure}

\section{Optimal biclique partitions}\label{sec:optimal_biclique}
Recall that, given a graph\footnote{Unless otherwise specified, all graphs we consider are finite, simple, and undirected.} $G = (V, E)$ a \emph{biclique partition} is a set $\mathcal{B} := \{B_1, \ldots, B_k\}$ where each $B_i$ is a complete bipartite graph, and such that $E(G) = \bigsqcup_{i=1}^k E(B_i)$. The \emph{weight} $w(\mathcal{B})$ of such a partition is simply $w(\mathcal{B}) := \sum_{i=1}^{k} |V(B_i)|$. The \emph{load} of a vertex $v$, denoted $\ell_\mathcal{B}(v)$, is the number of bicliques $B_i$ such that $v \in V(B_i)$.  Therefore, for any partition $\mathcal{B}$ of a graph $G$ on $n$ vertices, we have 
\begin{align*}
w(\mathcal{B}) &= \sum_{B \in \mathcal{B}} |V(B)| = \sum_{B \in \mathcal{B}} \sum_{v \in V(G)} \mathbb{1}_{[v \in V(B)]} = \sum_{v \in V(G)} \sum_{B \in \mathcal{B}} \mathbb{1}_{[v \in V(B)]}
= \sum_{v \in V(G)} \ell_{\mathcal{B}}(v) \leq n
\cdot \max_{v \in V(G)} \ell_{\mathcal{B}}(v),
\end{align*}
from where $\totalWeight(G) \leq n \cdot \lbp(G)$ and thus $\totalWeight(n) \leq n \cdot \lbp(n)$.

\subsection{Upper bound}

First, we will prove the upper bound on \Cref{thm-ces-optimal}. In fact, we will show an optimal upper bound on $\lbp(n)$, and then use the inequality $\totalWeight(n) \leq n \cdot \lbp(n)$.
We shall need the following concept.

\begin{definition}
    We say that a tournament (i.e., an orientation of the complete graph) is \emph{almost regular} if, for every vertex, the indegree and outdegree differ by at most 1.
\end{definition}
It is not hard to see that for every $n$, one can efficiently construct an almost-regular tournament on $n$ vertices.
For example, given $i, j \in [n]$, let $d_n(i,j) = \min(|i-j|, n - |i-j|)$. Then, the directed graph $T_n$ with $V(T_n) = [n]$ and
\begin{align*}
    E(T_n) := \{(i,j) : (i < j \ \text{and} \ d_n(i,j) \equiv 0 \;(\bmod\; 2)) \ \text{or} \ (i > j \ \text{and} \ d_n(i,j) \equiv 1 \;(\bmod\; 2))\}
\end{align*}
is an almost regular tournament. See~\Cref{subfig:almost-regular} for an illustration.

\begin{figure}
    \centering
    \input{figures/partition}
    \caption{Illustration of the biclique partition construction.}
    \label{fig:construction}
\end{figure}

\begin{theorem}\label{thm:ep}
    We have
    \(
        \lbp(n) \le \frac{1}{2}\frac{n}{\lg n} + O\left(\frac{n \cdot \lg \lg n}{\lg^2(n)}\right),
    \)
    and there is a biclique partition realizing this bound in which the number of bicliques is $O(n^2/\lg^3(n))$. Furthermore, given a graph $G$ on $n$ vertices, such a biclique partition can be constructed deterministically in time $O(n^2)$.
\end{theorem}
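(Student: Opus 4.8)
The plan is to adapt Lupanov's profile-grouping construction to general (not bipartite) graphs, using an \emph{almost-regular tournament} on a partition of $V(G)$ into small blocks in order to avoid the factor-of-two loss in the naive bipartite-doubling argument behind \Cref{cor:general-lupanov}: there, each vertex effectively plays the role of an ``$X$-vertex'' for the whole graph, whereas with the tournament it will do so for only (roughly) half the graph, which is exactly where the optimal constant $\tfrac12$ comes from.

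Concretely, set $b := \lfloor \lg n - 2\lg\lg n\rfloor$ and $k := \lceil n/b\rceil$, and partition $V(G) = P_1 \sqcup \dots \sqcup P_k$ into blocks of size at most $b$. Fix an almost-regular tournament $T$ on $[k]$ (e.g.\ the explicit $T_k$ above), so that $d^+_T(i) \le \lceil(k-1)/2\rceil$ for every $i$. The biclique partition $\mathcal B$ will consist of: (i) a single-edge biclique $(\{u\},\{w\})$ for each edge $\{u,w\}$ of $G$ lying inside a single block; and (ii) for every ``head'' block $P_j$, writing $W_j := \bigcup_{i\,:\,i\to j}P_i$, and for every nonempty $S\subseteq P_j$ for which $U_{j,S} := \{u\in W_j : N_G(u)\cap P_j = S\}$ is nonempty, the biclique $(S,U_{j,S})$ --- this type-(ii) object is the ``oriented biclique'' depicted in \Cref{fig:construction}. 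The first thing to verify is that $\mathcal B$ is a biclique partition of $G$: an edge with both ends in one block is covered only by its single-edge biclique, and an edge $uw$ with $u\in P_i$, $w\in P_j$ and $i\to j$ is covered by $(S,U_{j,S})$ for the \emph{unique} $S = N_G(u)\cap P_j$ (note $w\in S$, $u\in U_{j,S}$) and by no other member, since for a fixed head $P_j$ the sets $U_{j,S}$ are pairwise disjoint and disjoint from $P_j$, no head $P_\ell$ with $\ell\notin\{i,j\}$ has a member touching both $P_i$ and $P_j$, and a member of head $P_i$ would require $w\in W_i$, i.e.\ $j\to i$, a contradiction.

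Next I would bound the load of a vertex $v\in P_i$. It lies in at most $|P_i|-1\le b-1$ single-edge bicliques; in at most one type-(ii) member per head $P_j$ with $i\to j$ (the one for $S = N_G(v)\cap P_j$), hence at most $d^+_T(i)\le\lceil(k-1)/2\rceil$ of them ``from below''; and in at most one type-(ii) member of its own head $P_i$ per subset $S\ni v$ of $P_i$, hence at most $2^{|P_i|-1}\le 2^{b-1}$ ``from above''. Thus $\ell_{\mathcal B}(v)\le (b-1) + \lceil(k-1)/2\rceil + 2^{b-1}$. The choice of $b$ is made precisely so that $2^{b-1}\le n/(2\lg^2 n) = O(n\lg\lg n/\lg^2 n)$ while still $b = \lg n - O(\lg\lg n)$, so that $k = n/b + O(1) = \tfrac{n}{\lg n} + O(n\lg\lg n/\lg^2 n)$ and $b-1 = O(\lg n) = O(n\lg\lg n/\lg^2 n)$; substituting yields $\ell_{\mathcal B}(v)\le \tfrac12\tfrac{n}{\lg n} + O(n\lg\lg n/\lg^2 n)$, the claimed bound on $\lbp(n)$. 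For the count, $|\mathcal B|\le \sum_i\binom{|P_i|}{2} + \sum_j 2^{|P_j|} \le k\binom b2 + k\,2^b = O(n\lg n) + O(n^2/\lg^3 n) = O(n^2/\lg^3 n)$, using $2^b\le n/\lg^2 n$. For the algorithm, assume $G$ is given as an adjacency matrix (building one costs $O(n^2)$); computing $b$, $k$, the blocks and $T$ (via the $d_n$ formula, on demand) is $O(n)$, and the single-edge bicliques are produced by scanning each block in $O(\sum_i|P_i|^2)=O(n\lg n)$ time. For type (ii), process each head $P_j$ using an array of $2^{|P_j|}\le n/\lg^2 n$ buckets: for each $u\in W_j$ read the $|P_j|\le b$ relevant matrix entries, compute the profile $N_G(u)\cap P_j$ in $O(b)$ time, append $u$ to its bucket, and finally output $(S,U_{j,S})$ for each nonempty bucket. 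Allocating/clearing the bucket arrays costs $O(\sum_j 2^{|P_j|})=O(n^2/\lg^3 n)$; since $\sum_j|W_j| = \sum_i|P_i|\,d^+_T(i) \le n\lceil(k-1)/2\rceil = O(n^2/\lg n)$, reading all profiles costs $O(b)\cdot O(n^2/\lg n) = O(n^2)$ as $b=O(\lg n)$; and outputting the bicliques costs $O(w(\mathcal B)) = O(n^2/\lg n)$. In total $O(n^2)$.

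The main obstacle is not a single hard step but the simultaneous calibration of $b$: it must be small enough that the ``from above'' contribution $2^{b-1}$ disappears into the error term $O(n\lg\lg n/\lg^2 n)$ --- e.g.\ with $b\approx\lg n-\lg\lg n$ it would be of order $n/\lg n$ and we would only recover the constant $1$ of \Cref{cor:general-lupanov} --- yet large enough that $k = (1+o(1))n/\lg n$ with the correct second-order term, since the ``from below'' contribution $\lceil(k-1)/2\rceil$ is exactly what carries the optimal constant $\tfrac12$. A secondary technical point is keeping the profile computation at $O(n^2)$ rather than $O(n^2\lg n)$, which works out because $b=O(\lg n)$ and $\sum_j|W_j|=O(n^2/\lg n)$, and because $2^b\le n$ lets profiles index directly into an array instead of a hash table.
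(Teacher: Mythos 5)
Your construction is exactly the paper's: partition into blocks of size $\lfloor \lg n - 2\lg\lg n\rfloor$, impose an almost-regular tournament on the blocks, and for each block $P_j$ bucket the ``incoming'' vertices by their neighborhood profile in $P_j$ (your $(S,U_{j,S})$ is the paper's $(S,A(S))$), plus single-edge bicliques for intra-block edges. The load analysis, the count, and the $O(n^2)$ algorithm via direct-indexed profile buckets all match the paper's argument.
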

\begin{proof}
Let $\partSize := \lfloor \lg n - 2\lg \lg n \rfloor$ and partition $V(G)$ into $\lceil n / \partSize \rceil$ parts $P_1, \ldots, P_{\lceil n / \partSize \rceil}$ of size at most $\partSize$ each. Given a vertex $v \in V(G)$, let $g(v)$ be the index of the part $P_i$ containing $v$. Then, we consider an almost-regular tournament $R$ between the parts; so for each pair of part indices $i \neq j$ from $[\ceil{n/\partSize}]$, we write $R(i,j)$ if there is a directed edge from $i$ to $j$ in this tournament. Now, for each part $P_i$, and every nonempty subset $S \subseteq P_i$, we define $A(S) := \{ v \in V : N(v) \cap P_i = S \text{ and } R(g(v), i)\}$. See~\Cref{fig:construction} for an illustration. We now consider the set of bicliques
\[
\mathcal{C} := \underbrace{\{ (S, A(S)) : S \subseteq P_i, i \in [\ceil{n/\partSize}]\}}_{\textcolor{gray!90}{\text{across parts}}} \cup \underbrace{\{(\{u\}, \{v\}) : \{u, v\} \in E(G), g(u) = g(v)\}}_{\textcolor{gray!90}{\text{within parts}}} 
\]

First, let us see that $\mathcal{C}$ is a biclique partition of $G$. Let $\{u, v\} \in E(G)$. If $g(u) = g(v)$, then $\{u,v\}$ is covered uniquely by the biclique $(\{u\}, \{v\}) \in \mathcal{C}$. Otherwise, $g(u) \neq g(v)$, and we assume without loss of generality that $R(g(u),g(v))$. Let $S = N(u) \cap P_{g(v)}$. We claim that $\{u,v\}$ is covered uniquely by the biclique $(S,A(S)) \in \mathcal{C}$. Indeed, $u \in A(S)$ and $v \in S$, so $\{u,v\}$ is covered by $(S,A(S))$. Furthermore, if $\{u,v\}$ were covered by another biclique $(S',A(S')) \in \mathcal{C}$, then we must have $u \in A(S')$ and $v \in S'$, since $R(g(x),g(y))$ for every $x \in A(S')$ and $y \in S'$. Since $v \in P_{g(v)} \cap S'$, we have $S' \subseteq P_{g(v)}$. Then, since $u \in A(S')$, we have $S' = N(u) \cap P_{g(v)} = S$. Hence, $(S',A(S')) = (S,A(S))$, so $\{u,v\}$ is covered uniquely by a biclique in $\mathcal{C}$.

Now, fix a vertex $v$, and let us count how many bicliques from $\mathcal{C}$ contain $v$. First, there are the bicliques of the form $(\{u\}, \{v\})$ for $g(u)=g(v)$, of which there are at most $|P_{g(v)}|-1 = O(\lg(n))$. Second, there are the bicliques of the form $(S,A(S))$ with $v \in S$, of which there are at most
\[
    O\left(2^{|P_{g(v)}|}\right) = O\left(2^{\lg n - 2\lg \lg n}\right) = O\left(\frac{n}{\lg^2(n)}\right).
\]
Third, there are the bicliques of the form $(S,A(S))$ with $v \in A(S)$. By almost regularity of $R$, at most $\ceil{\frac{n}{2\partSize}}$ parts $P_i$ hold $R(g(v), i)$, and thus the number of such bicliques is at most
\[
    \ceil{\frac{n}{2\partSize}} \le \frac{n}{2(\lg n - 2\lg \lg n -1)} + 1 \le \frac{n}{2 \lg n} + O\left(\frac{n \cdot \lg \lg n}{\lg^2(n)}\right)
\]
This concludes the first part of the theorem.
For the total number of bicliques we have
\[
    |\mathcal{C}| = O\left(\frac{n}{\partSize} \cdot (2^{\partSize} + \partSize^2)\right) = O\left(\frac{n^2}{\lg^3(n)}\right).
\]

As the pseudocode in \Cref{alg:ep} demonstrates, the above construction can be implemented in deterministic $O(n^2)$ time. 
\end{proof}

\begin{algorithm}
    \caption{Biclique Partition}
    \label{alg:ep}
    \begin{algorithmic}[1]
    \Require A graph $G = (V, E)$ with $V = [n]$.

    \State $\partSize \gets \lfloor \lg n - 2\lg \lg n  \rfloor$
    \Function{$R$}{$i,j$} \Comment{$R \colon [\ceil{n/\partSize}]^2 \to \textsf{Bool}$}
        \State \Return $(i < j \ \text{and} \ d_n(i,j) \equiv 0 \;(\bmod\; 2)) \ \text{or} \ (i > j \ \text{and} \ d_n(i,j) \equiv 1 \;(\bmod\; 2))$
    \EndFunction
    
    \ForAll{$i \in [\lceil n/\partSize \rceil]$}
        \State $P_i \gets \{(i-1)\cdot \partSize+1,\dots,i\cdot \partSize\}$
        \State $A \gets \varnothing$ \Comment{$A \colon \mathcal{P}(P_i) \to \mathcal{P}(V)$, implemented as an array of linked lists of size $2^{|P_i|} = O\left(\frac{n}{\lg^2 n}\right)$}
        \ForAll{$v \in V$}
            \If{$R(\ceil{v/\partSize},i)$}
                \State $S \gets \varnothing$ \Comment{$S \colon \mathcal{P}(P_i)$}
                \ForAll{$u \in P_i$}
                    \If{$\{u,v\} \in E$}
                        \State $S \gets S \cup \{u\}$
                    \EndIf
                \EndFor
                \State $A(S) \gets A(S) \cup \{v\}$ \Comment{We  think of $S$ as a binary string that indexes $A$}
            \EndIf
        \EndFor
        \ForAll{$S \in \mathcal{P}(P_i)$}
            \If{$|S| > 0$  \text{ and } $|A(S)| > 0$} 
            \State \textbf{Emit} $(S,A(S))$
            \EndIf
        \EndFor
        \ForAll{$\{u,v\} \in \binom{P_i}{2}$}
            \If{$\{u,v\} \in E$}
                \State \textbf{Emit} $(\{u\},\{v\})$
            \EndIf
        \EndFor
    \EndFor
\end{algorithmic}
\end{algorithm}

\subsection{Lower bound}

The lower bound on $\cover$ is based on a simple information-theoretic argument: if every $n$-vertex graph can be covered with bicliques of weight at most $(r + o(1)) \frac{n^2}{\lg n}$, then we could specify an $n$-vertex graph with $(r + o(1))n^2$ bits, which is impossible if $r < \frac{1}{2}$. This argument is given in slightly less detail in~\cite{cardinal_et_al:LIPIcs.ESA.2025.67}.

\begin{theorem} \label{thm:lower-bound-cover}
    We have
    \[
        \cover(n) \ge \left(\frac{1}{2} - o(1)\right) \frac{n^2}{\lg n}.
    \]
\end{theorem}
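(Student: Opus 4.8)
The plan is to make precise the information-theoretic counting argument sketched in the text. Suppose, for contradiction, that $\cover(n) < \left(\tfrac12 - \varepsilon\right)\tfrac{n^2}{\lg n}$ for some fixed $\varepsilon > 0$ and infinitely many $n$; equivalently, every $n$-vertex graph admits a biclique covering of weight at most $\left(\tfrac12 - \varepsilon\right)\tfrac{n^2}{\lg n}$. I would use such coverings as a lossless encoding scheme for graphs, and then contradict the fact that there are $2^{\binom{n}{2}}$ graphs on the labeled vertex set $[n]$, so that any prefix-free (or simply injective, fixed-destination-length-set) encoding must use at least $\binom{n}{2} = (1-o(1))\tfrac{n^2}{2}$ bits on the worst case.

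The encoding works as follows. Given $G$ on vertex set $[n]$, fix a minimum-weight biclique covering $\mathcal{B} = \{B_1,\dots,B_k\}$ with $w(\mathcal{B}) = \sum_i |V(B_i)| \le \left(\tfrac12-\varepsilon\right)\tfrac{n^2}{\lg n}$. Each biclique $B_i$ with parts $(X_i, Y_i)$ can be described by listing its vertices together with a bit per vertex saying which side it is on: this costs $|V(B_i)|\cdot(\lg n + O(1))$ bits, plus a small overhead to delimit the bicliques and record $k$. Summing over $i$, the total is at most $w(\mathcal{B})\cdot(\lg n + O(1)) + O(k \lg n)$ bits; since we may assume $k \le w(\mathcal{B})$ (every biclique has at least one vertex, in fact at least two, so $k \le w(\mathcal{B})/2$), this is at most $w(\mathcal{B}) \cdot \lg n \cdot (1 + o(1)) \le \left(\tfrac12 - \varepsilon\right)(1+o(1)) n^2$ bits. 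Crucially, $G$ is recoverable from this description: a pair $\{u,v\}$ is an edge of $G$ if and only if some listed biclique has $u$ on one side and $v$ on the other (here we use that $\mathcal{B}$ is a covering, so every edge appears, and that bicliques contain no non-edges). For large $n$ the bound $\left(\tfrac12 - \varepsilon\right)(1+o(1)) n^2 < \binom{n}{2}$, so fewer than $2^{\binom{n}{2}}$ distinct bit-strings are produced, contradicting injectivity of the encoding over the $2^{\binom{n}{2}}$ graphs. Taking $\varepsilon \to 0$ gives the stated bound $\cover(n) \ge \left(\tfrac12 - o(1)\right)\tfrac{n^2}{\lg n}$.

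Two technical points deserve care, and the second is the only mild obstacle. First, one must be slightly careful that the encoding has a self-delimiting format so that distinct graphs genuinely map to distinct strings and a simple counting bound applies; using a fixed-length header for $n$, then a length field for $k$, then for each biclique a length field followed by that many $\lceil \lg n\rceil$-bit vertex labels each tagged with a side bit, suffices, and all the overhead is $O(k \log n) = o(n^2)$. Second, the passage from "average description length bounded by $(\tfrac12-\varepsilon+o(1))n^2$" — which is really a worst-case bound here since $w(\mathcal{B}) \le (\tfrac12-\varepsilon)\tfrac{n^2}{\lg n}$ holds for \emph{every} $G$ — to a contradiction is immediate: an injection from a set of size $2^{\binom n2}$ into $\{0,1\}^{\le L}$ forces $L \ge \binom n2$, but here every codeword has length $\le (\tfrac12-\varepsilon+o(1))n^2 < \binom n2$ for $n$ large. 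I expect the hardest part to be purely bookkeeping: making the $\lg n$ versus $\lceil \lg n \rceil$ and the $O(k\lg n)$ and $O(\lg n)$ delimiter overheads all provably $o(n^2/\lg n)\cdot \lg n = o(n^2)$, so that they are absorbed into the $o(1)$ in the final statement; none of this is deep, but it must be written so that the inequality $w(\mathcal B)(\lg n + O(1)) + O(k\lg n) + O(\lg n) < \binom n2$ is transparent for all large $n$ whenever $w(\mathcal B)\le (\tfrac12-\varepsilon)\tfrac{n^2}{\lg n}$.
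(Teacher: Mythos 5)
Your conceptual argument — encode a biclique cover and count against the $2^{\binom{n}{2}}$ labeled graphs — is exactly the paper's. However, the specific self-delimiting format you propose does not yield the $(\tfrac12 - o(1))$ constant: as written, the counting argument only gives $(\tfrac13 - o(1))$.

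The issue is the claim that the total delimiter overhead $O(k\lg n)$ is $o(n^2)$. This would require $k = o(n^2/\lg n)$, but the only bound you establish on the number of bicliques is $k \le w(\mathcal{B})/2 = O(n^2/\lg n)$, and you have no grounds to rule out $k = \Theta(n^2/\lg n)$ for the chosen minimum-weight cover (for instance a cover by constant-weight bicliques has this). Plugging $k \le w(\mathcal{B})/2$ into the $k$ binary length fields of $\lceil\lg n\rceil$ bits each gives an overhead of up to $\tfrac12 w(\mathcal{B})\lg n$ bits, a constant fraction of the main term $w(\mathcal{B})\lg n$ rather than a lower-order one. Your codeword length is therefore bounded only by $\tfrac32 w(\mathcal{B})\lg n\,(1+o(1))$, and the injectivity count forces just $\cover(n) \ge (\tfrac13 - o(1))\,n^2/\lg n$, not $(\tfrac12 - o(1))\,n^2/\lg n$.

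To recover the tight constant, the per-biclique delimiter cost must be $O(w_i)$ rather than $\Theta(\lg n)$. The paper does this by prefix-encoding the two part sizes in \emph{unary} as $0^{n_1}1\,0^{n_2}1$ (costing $n_1+n_2+2$ bits per biclique) and then distinguishing the two sides by position in the ensuing list of vertex labels, so no per-vertex side bits are needed either. The total delimiter overhead is then $w(\mathcal{B}) + 2k \le 2w(\mathcal{B}) = O(n^2/\lg n) = o(n^2)$, the codeword length is $w(\mathcal{B})(\lg n + O(1))$, and the $\tfrac12$ constant follows. Any self-delimiting scheme with per-biclique overhead $O(w_i)$ — e.g.\ one ``end of biclique'' flag bit per vertex — works equally well; the point is to not pay $\Theta(\lg n)$ per biclique when $k$ may be as large as $\Theta(n^2/\lg n)$.
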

\begin{proof}
Let $\mathcal{G}_n$ be the set of graphs with vertex set $[n]$. We will show there exists an injective function $f: \mathcal{G}_n \to \{0, 1\}^\star$ such that 
\(
|f(G)| \leq \cover(G) \cdot (\lg n + O(1))
\) for every $G \in \mathcal{G}_n$, and thus $|s| \leq \cover(n) \cdot (\lg n + O(1))
$ for every $s \in \text{range}(f)$. Note that the theorem follows from this: there are at most
\(
 2^{\cover(n) \cdot (\lg n + O(1))}
\)
binary strings of length at most ${\cover(n) \cdot (\lg n + O(1))}$, and thus $|\text{range}(f)| \leq 2^{\cover(n) \cdot (\lg n + O(1))}$, but by the injectivity of $f$ we  deduce
\(
 2^{\binom{n}{2}} = |\text{domain}(f)| \leq |\text{range}(f)| \leq 2^{\cover(n) \cdot (\lg n + O(1))},
\)
from where the result follows by taking logarithms.
\newcommand{\lexprec}{\preceq_{\text{lex}}} 

It only remains to show that such an $f$ exists. Indeed, identify the vertices of an $n$-vertex graph $G$ with binary strings of length $\ceil{\lg n}$, and let $\circ$ denote concatenation. Then, given a biclique $(X,Y)$ in $G$ with $n_1 := |X|$ and $n_2 := |Y|$, let $v_1,\dots,v_{n_1}$ be the vertices of $X$ in lexicographic order, and let $w_1,\dots,w_{n_2}$ be the vertices of $Y$ in lexicographic order. Then, we can represent the biclique $(X,Y)$ with the binary string
    \(
        \mathsf{enc}(X,Y) := 0^{n_1} \circ 1 \circ 0^{n_2} \circ 1 \circ v_1 \circ \dots \circ v_{n_1} \circ w_1 \circ \dots \circ w_{n_2}
    \)
    of length $(n_1 + 1) + (n_2 + 1) + (n_1 + n_2) \ceil{\lg n} = (n_1 + n_2) (\lg n + O(1))$. Now, given a biclique cover $\mathcal{B} = \{(X_1,Y_1),\dots,(X_k,Y_k)\}$, we can represent it with the binary string
    \(
               \mathsf{enc}(X_1,Y_1) \circ \mathsf{enc}(X_2,Y_2) \circ \cdots \circ \mathsf{enc}(X_k,Y_k)
    \)
    of length $w(\mathcal{B}) \cdot (\lg n + O(1))$, and let $g(\mathcal{B})$ be one of these strings chosen arbitrarily. 
    Let $f(G) := g(\mathcal{B})$, where $\mathcal{B}$ is a biclique cover of weight $\cover(G)$ chosen arbitrarily. Then, given an $n$-vertex graph $G$, $f(G)$ is a binary string of length at most $\cover(G) (\lg n + O(1))$. Moreover, $f$ is injective on the set of $n$-vertex graphs, since a biclique cover uniquely specifies an $n$-vertex graph and the encoding $g$ is injective. This concludes the proof.
\end{proof}

\Cref{thm-ces-optimal} follows immediately from \Cref{thm:ep,thm:lower-bound-cover}. 
The lower bound proof can be extended to almost all graphs, i.e., with high probability in the $G(n, \nicefrac{1}{2})$ model, answering another question of Chung, Erd\H{o}s, and Spencer. We generalize this to the $G(n, p)$ model for other values of $p$ in~\Cref{sec:random_graphs}.

It is worth commenting that our upper bound proof is similar to that of~\cite{lupanov}, and also that of~\cite{ep-hypergraph}, but we get a $\tfrac{1}{2}$ factor through the almost-regular tournament idea, whereas~\cite{ep-hypergraph} orient each edge, which ends up being a worse choice. In fact, the almost-regularity idea can be seen as a very basic form of \emph{equitability}, a concept we will develop much more powerfully when generalizing to hypergraphs. 

Furthermore, the previous analysis can easily be extended to directed graphs. We do not allow directed graphs to have loops, since a loop cannot be covered by a directed biclique.

\begin{theorem}
    Every directed graph $G = (V, E)$ can be partitioned into directed bicliques such that every vertex is contained in at most $(1 + o(1))\frac{n}{\lg n}$ of the directed bicliques, and this is asymptotically optimal.
\end{theorem}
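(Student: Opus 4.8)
The plan is to transcribe the two proofs of this section into the directed setting: the construction of \Cref{thm:ep} gives the upper bound, and the entropy argument of \Cref{thm:lower-bound-cover} gives the matching lower bound. The one conceptual point to keep in mind is that for a directed graph the orientations are handed to us, so the almost-regular tournament on the parts is no longer needed to choose one — but for exactly the same reason it no longer buys us the factor $\tfrac12$, which is why the constant here is $1$ rather than $\tfrac12$.

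\textbf{Upper bound.} I would partition $V$ into $\lceil n/\partSize\rceil$ parts $P_1,\dots$ of size at most $\partSize := \lfloor \lg n - 2\lg\lg n\rfloor$, and for each part $P_i$ and each nonempty $S\subseteq P_i$ form the directed biclique $(X_{i,S},S)$, all arcs oriented from $X_{i,S}$ to $S$, where $X_{i,S} := \{w\in V : N^+(w)\cap P_i = S\}$. This is a legitimate directed biclique because $S\subseteq N^+(w)$ for every $w\in X_{i,S}$, and $X_{i,S}\cap S=\emptyset$ since a loopless digraph has $w\notin N^+(w)$. The first step is to check that $\{(X_{i,S},S) : S\subseteq P_i,\ S\ne\emptyset,\ X_{i,S}\ne\emptyset\}$ is an edge partition of $G$: an arc $(u,v)$ lies in exactly the biclique with $i=g(v)$ and $S=N^+(u)\cap P_i$, because $v\in S\subseteq P_i$ forces the index $i$ and then $u\in X_{i,S}$ forces $S$; note that arcs internal to a part are absorbed with no special casing. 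The second step is the load count for a fixed vertex $v$: as a ``sink'' (i.e.\ $v\in S$) it lies in at most $2^{|P_{g(v)}|}=O(n/\lg^2 n)$ bicliques, and as a ``source'' (i.e.\ $v\in X_{i,S}$) it lies in at most one biclique per part index $i$ — namely the one with $S=N^+(v)\cap P_i$ — hence in at most $\lceil n/\partSize\rceil = (1+o(1))\frac{n}{\lg n}$ bicliques; summing gives the claimed bound. As with \Cref{alg:ep}, this construction is implementable in $O(n^2)$ time.

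\textbf{Lower bound.} I would run the argument of \Cref{thm:lower-bound-cover} essentially verbatim, replacing the $\binom n2$ potential undirected edges by the $n(n-1)$ potential arcs: a directed biclique $(X,Y)$ encodes in $(|X|+|Y|)(\lg n + O(1))$ bits, one extra bit per vertex recording source-versus-sink, absorbed into the $O(1)$; so a directed biclique cover of weight $w$ encodes in $w(\lg n+O(1))$ bits and determines the digraph. Since there are $2^{n(n-1)}$ loopless digraphs on $[n]$, the worst one, $G^*$, has minimum cover weight $w\ge(1-o(1))n^2/\lg n$, and since $w=\sum_v\ell(v)\le n\max_v\ell(v)$ for any cover of $G^*$ — in particular any partition — $G^*$ has a vertex of load $\ge(1-o(1))\frac{n}{\lg n}$, matching the upper bound.

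The main obstacle is conceptual rather than technical: one must resist reintroducing the tournament in the hope of saving a factor $\tfrac12$, and confirm that it genuinely cannot help here. A short accounting does this: under any scheme that covers some arcs ``from the tail's part'' and others ``from the head's part'' according to a choice on ordered pairs of parts, each unordered pair $\{p,q\}$ contributes a total of exactly $2$ to $\ell_p+\ell_q$ across its two ordered pairs, so $\sum_p\ell_p = N(N-1)$ with $N$ the number of parts, forcing some part to incur load $\ge N-1=(1-o(1))\frac{n}{\lg n}$. This matches the entropy lower bound, confirming that the naive part-by-part construction above is already optimal and that the constant $1$ is unavoidable; everything else is a routine adaptation of the proofs of \Cref{thm:ep,thm:lower-bound-cover}.
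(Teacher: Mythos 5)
Your proof is correct and follows essentially the same route as the paper: partition $V$ into parts of size $\lfloor \lg n - 2\lg\lg n\rfloor$, form the directed bicliques $(X_{i,S},S)$ with $S\subseteq P_i$ and $X_{i,S}=\{w : N^+(w)\cap P_i = S\}$ (the paper writes this as $(A(S),S)$ but describes the identical construction, and likewise notes that the tournament is dispensed with), and obtain the matching lower bound from the entropy argument of \Cref{thm:lower-bound-cover} with $2^{n(n-1)}$ digraphs. The paper merely sketches the load count and the lower bound adaptation, saying they are ``very similar'' to \Cref{thm:ep,thm:lower-bound-cover}; your write-up supplies those details correctly, and your closing heuristic about why no tournament can recover the factor $\tfrac12$ is a harmless aside that the entropy bound renders unnecessary.
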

\begin{proof}
    The lower bound follows directly from recreating the proof of~\Cref{thm:lower-bound-cover} but noting that there are $2^{n^2-n}$ directed graphs on $n$ vertices.
    For the upper bound, the construction is very similar to the proof of \Cref{thm:ep}, albeit even simpler. We again let $\partSize := \lfloor \lg n - 2\lg \lg n \rfloor$ and partition $V(G)$ into $\lceil n / \partSize \rceil$ parts $P_1, \ldots, P_{\lceil n / \partSize \rceil}$ of size at most $\partSize$ each. Given a vertex $v \in V(G)$, let $g(v)$ be the index of the part $P_i$ containing $v$. Now, for each part $P_i$, and every subset $S \subseteq P_i$, we define $A(S) := \{ v \in V : N(v) \cap P_i = S\}$. Consider the set of directed bicliques
    \(
    \mathcal{C} := \{ (S, A(S)) \mid S \subseteq P_i, i \in [\ceil{n/\partSize}] \}
    \)
    Given $(u,v) \in E(G)$, let $S = N(u) \cap P_{g(v)}$. Then, it is not hard to see that $(u,v)$ is covered uniquely by $(A(S),S)$, so $\mathcal{C}$ is a directed biclique partition of $G$. The proof that each vertex is contained in at most $(1 + o(1))\frac{n}{\lg n}$ directed bicliques from $\mathcal{C}$ is very similar to the proof of \Cref{thm:ep}.
\end{proof}

\section{Generalization to hypergraphs}\label{sec:hypergraphs}
In this section we prove \Cref{theorem:ces-hypergraph-optimal}.
For the sake of exposition, we begin with a weaker result that only bounds $\textsf{MC}_d(n)$ and $\textsf{MP}_d(n)$, but has a much simpler proof.

\subsection{Optimal Chung--Erd\H{o}s--Spencer for hypergraphs}

\begin{theorem} \label{theorem:ces-hypergraph}
    For every $d \geq 2$, we have $\left(\frac{1}{d!} - o_d(1)\right)\frac{n^d}{\lg n} \leq \textsf{MC}_d(n) \leq \textsf{MP}_d(n) \leq \left(\frac{1}{d!} + o_d(1)\right)\frac{n^d}{\lg n}$.
\end{theorem}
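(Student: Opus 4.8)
The proof splits into a (routine) lower bound on $\textsf{MC}_d$ and a (substantial) upper bound on $\textsf{MP}_d$; the middle inequality $\textsf{MC}_d(n)\le\textsf{MP}_d(n)$ is immediate since a partition is a cover. For the lower bound I would reprise the information-theoretic argument of \Cref{thm:lower-bound-cover}: identify the vertices with binary strings of length $\lceil\lg n\rceil$, and encode a single $d$-clique $(V_1,\dots,V_d)$ of weight $s=\sum_i|V_i|$ in $s(\lg n+O_d(1))$ bits — write the $d$ part sizes in unary with separators ($s+d$ bits), then list the $s$ vertices, each tagged with its part index ($s(\lceil\lg n\rceil+\lceil\lg d\rceil)$ bits). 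Concatenating over a cover $\mathcal F$ of weight $w$ produces $w(\lg n+O_d(1))$ bits from which $\bigcup_{B\in\mathcal F}E(B)$, and hence the covered hypergraph, is recoverable; since there are $2^{\binom{n}{d}}$ $d$-uniform hypergraphs on $[n]$, this forces $\textsf{MC}_d(n)\cdot(\lg n+O_d(1))\ge\binom{n}{d}=(1-o_d(1))n^d/d!$, i.e.\ $\textsf{MC}_d(n)\ge(\tfrac1{d!}-o_d(1))\tfrac{n^d}{\lg n}$.

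For the upper bound I would prove $\textsf{MP}_d(n)\le(\tfrac1{d!}+o_d(1))\tfrac{n^d}{\lg n}$ by induction on $d$ (the base case $d=2$ being \Cref{thm-ces-optimal}), generalizing the construction of \Cref{thm:ep}. Partition $V(H)$ into $m=\lceil n/\partSize\rceil$ parts $P_1,\dots,P_m$ of size $\le\partSize:=\lfloor\lg n-2\lg\lg n\rfloor$, and replace the almost-regular tournament by an \emph{equitable $d$-hypertournament}: a map $\phi\colon\binom{[m]}{d}\to[m]$ with $\phi(\tau)\in\tau$ selecting each part on a $(\tfrac1d+o_d(1))$-fraction of the $d$-sets containing it (with the induced $(d{-}1)$-uniform selection rules also near-balanced, so the recursion closes). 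For an edge $e$ whose vertices lie in $d$ distinct parts $\tau=\mathrm{parts}(e)$, call $i=\phi(\tau)$ its \emph{special part} and $v_\star\in e\cap P_i$ its \emph{special vertex}; the $o(n^{d-1})$ edges with a repeated part go into their own singleton $d$-cliques, costing $o(n^d/\lg n)$. For each $i$ and each nonempty $S\subseteq P_i$, let $G^{(i)}_S$ be the $(d{-}1)$-uniform hypergraph on $\bigcup_{j\ne i}P_j$ whose edges are the $(d{-}1)$-sets $a$ with $\phi(\mathrm{parts}(a)\cup\{i\})=i$ and link $\{w\in P_i:a\cup\{w\}\in E(H)\}$ equal to $S$; take a $(d{-}1)$-clique partition $\mathcal Q^{(i)}_S$ of it and append the part $S$ to every clique. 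One checks the resulting $d$-cliques are valid and, together with the singleton cliques, partition $E(H)$ (a generic $e$ is covered exactly once: at $i=\phi(\mathrm{parts}(e))$, with $S$ the link of $e\setminus\{v_\star\}$ in $P_i$, inside the unique clique of $\mathcal Q^{(i)}_S$ through $e\setminus\{v_\star\}$). The total weight is $\sum_{i,S}\big(w(\mathcal Q^{(i)}_S)+|S|\,|\mathcal Q^{(i)}_S|\big)$, and the target constant appears via $\sum_{i,S}|E(G^{(i)}_S)|\le(1+o_d(1))\tfrac{m}{d}\binom{n}{d-1}=(1+o_d(1))\tfrac{n^d}{d!\,\lg n}$, where the $\tfrac1d$ is the equitability gain and $\tfrac1{\partSize}\sim\tfrac1{\lg n}$ is the compression gain.

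The main obstacle is making $\sum_{i,S}w(\mathcal Q^{(i)}_S)$ and $\sum_{i,S}|S|\,|\mathcal Q^{(i)}_S|$ both $\le(\tfrac1{d!}+o_d(1))\tfrac{n^d}{\lg n}$. Feeding each $\mathcal Q^{(i)}_S$ the crude inductive bound $\textsf{MP}_{d-1}(n)$ is useless, since it would be incurred roughly $m\,2^{\partSize}\gg n$ times; and putting each edge of $G^{(i)}_S$ in its own clique loses a factor of $d-1$ in weight and, worse, makes $\sum_{i,S}|S|\,|\mathcal Q^{(i)}_S|$ reach $\Theta(n^d)$. So one needs the $\mathcal Q^{(i)}_S$ to be nearly weight-optimal for the \emph{number of edges} they contain \emph{and} to use few cliques — which is exactly what an adversarial, matching-like $G^{(i)}_S$ (forcing one clique per edge) would defeat. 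The real work, therefore, is to choose the parts $P_1,\dots,P_m$ and the hypertournament $\phi$ \emph{equitably with respect to $H$}, so that every $G^{(i)}_S$ is itself amenable to the inductive construction; this notion of equitability — foreshadowed by the almost-regular tournament in the $d=2$ case — is the conceptual heart of the hypergraph generalization. Because the present theorem concerns only the global quantities $\textsf{MC}_d(n),\textsf{MP}_d(n)$ rather than the per-vertex quantity $\lmp_d(n)$, only the \emph{sums} over $(i,S)$ must be controlled and a comparatively coarse equitability suffices, which is why this proof is ``much simpler'' than that of the optimal Erd\H{o}s--Pyber theorem for hypergraphs.
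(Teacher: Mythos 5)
Your lower bound is essentially the paper's: encode each $d$-clique in $s(\lg n + O_d(1))$ bits, concatenate over a cover, and compare against $2^{\binom{n}{d}}$ hypergraphs. The only cosmetic difference is the redundant part-index tag, which the paper avoids by listing the parts sequentially after the unary sizes; either version works.

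Your upper bound, however, diverges from the paper in a way that leaves a genuine, unfilled gap. You try to generalize the concrete construction of \Cref{thm:ep} directly — split $V(H)$ into $\approx n/\lg n$ parts, pick an equitable $d$-hypertournament $\phi$, and for each $(i,S)$ form a $(d-1)$-uniform link hypergraph $G^{(i)}_S$ to be recursively partitioned. You then correctly identify that this recursion cannot be closed with the tools stated: applying the inductive bound $\textsf{MP}_{d-1}(\cdot)$ once per pair $(i,S)$ is off by a large polynomial factor (there are $\Theta(n^2/\lg^3 n)$ such pairs, each on $\Theta(n)$ vertices), while the trivial one-clique-per-edge partition makes $\sum_{i,S}|S|\,|\mathcal{Q}^{(i)}_S|$ of order $n^d$, not $n^d/\lg n$. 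Your proposed remedy — choosing the parts and $\phi$ ``equitably with respect to $H$'' so that each $G^{(i)}_S$ is ``amenable to the inductive construction'' — is never defined, and it is not at all clear what property would make this work; the $G^{(i)}_S$ can genuinely be matching-like. This is not a small detail to be patched: as you write it, the argument does not establish the claimed weight bound.

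The paper's route is both different and much simpler. It proves \Cref{theorem:ces-hypergraph} by iterating \Cref{lemma:ces-hypergraph-stepup}: fix an arbitrary ordering $v_1,\dots,v_n$, send every hyperedge $e$ to the link of its minimum vertex $v_i$ (producing a $d$-uniform $H'_i$ on $\{v_{i+1},\dots,v_n\}$), partition each $H'_i$ inductively, and prepend $\{v_i\}$ as an extra singleton part. There is no partition into blocks, no tournament, and no equitability whatsoever — the ordering can be arbitrary because the target is only the \emph{total} weight, not per-vertex load. The $1/d!$ constant emerges from the elementary estimate $\sum_{i=1}^n i^d/\lg i \le (1+o_d(1))\,n^{d+1}/((d+1)\lg n)$. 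One further point worth noting: the step-up lemma carries along a bound on the \emph{number} of cliques as well as their weight, which is needed to make the induction close (the $f(n)$ term is absorbed as a lower-order contribution). Your proposal cites \Cref{thm-ces-optimal} as the base case, which bounds only $\totalWeight(n)$; the paper instead uses \Cref{thm:ep}, precisely because it also supplies the clique-count bound $O(n^2/\lg^3 n)$. The hypertournament-and-block machinery you reach for is indeed what the paper deploys — but for the per-vertex result (\Cref{theorem:ep-hypergraph-upper}), where it is unavoidable and where the paper develops the notion of an equitable selection strategy on $d$-distributions to make it precise; for the present theorem it is unnecessary overhead that, as your own obstruction shows, you were not able to make work.
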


To see the lower bound, note that the proof of~\Cref{thm:lower-bound-cover} naturally generalizes to $d$-graphs, with the only essential difference being that there are $2^{\binom{n}{d}}$ different $d$-graphs on $n$ vertices.
For the proof of the upper bound we need the following lemma, which will enable an induction on $d$.

\begin{lemma} \label{lemma:ces-hypergraph-stepup}
    Let $f, g$ be non-decreasing functions and $d \geq 2$.
    Suppose that each $n$-vertex $d$-uniform hypergraph can be partitioned into at most $f(n)$ many $d$-cliques with total weight at most $g(n)$.
    Then every $n$-vertex $(d+1)$-graph can be partitioned into at most $n\cdot f(n)$ many $(d+1)$-cliques with total weight at most $\sum_{i=0}^{n-1} ( f(i) + g(i) )$.
\end{lemma}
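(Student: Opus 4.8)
The plan is to use the familiar ``peel off the minimum vertex, recurse on the link'' idea. Fix an arbitrary linear order $1 < 2 < \dots < n$ on the vertex set $V$ of the given $(d+1)$-graph $H$, and for each vertex $v \in V$ consider the \emph{upward link} of $v$: the $d$-uniform hypergraph $H_v$ on vertex set $\{v+1,\dots,n\}$ whose edges are exactly the sets $e \setminus \{v\}$ with $e \in E(H)$ and $\min(e) = v$. Since $H_v$ has exactly $n - v$ vertices, the hypothesis supplies a partition $\mathcal{P}_v$ of $E(H_v)$ into at most $f(n-v)$ $d$-cliques of total weight at most $g(n-v)$. From each $d$-clique $(V_1,\dots,V_d) \in \mathcal{P}_v$ I build the $(d+1)$-clique $(\{v\}, V_1, \dots, V_d)$; its parts are pairwise disjoint because $V_1,\dots,V_d \subseteq \{v+1,\dots,n\}$, so this is a legitimate $(d+1)$-clique. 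The proposed partition $\mathcal{Q}$ of $H$ is the union of all these $(d+1)$-cliques over all $v \in V$.

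The first thing to check is that $\mathcal{Q}$ partitions $E(H)$. The key point is that every $e \in E(H)$ has a \emph{unique} minimum vertex $v := \min(e)$; then $e \setminus \{v\}$ is an edge of $H_v$, hence lies in exactly one $d$-clique $(V_1,\dots,V_d) \in \mathcal{P}_v$, so $e$ is an edge of $(\{v\}, V_1, \dots, V_d) \in \mathcal{Q}$. Conversely, $e$ cannot be an edge of any $(d+1)$-clique coming from $H_w$ with $w \neq v$, since containing $\{w\}$ as a part with the other parts inside $\{w+1,\dots,n\}$ forces $\min(e) = w$; and within $\mathcal{P}_v$ the choice of $d$-clique is unique. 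So each edge is covered exactly once.

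It remains to count. The number of $(d+1)$-cliques is $\sum_{v=1}^{n} |\mathcal{P}_v| \le \sum_{v=1}^{n} f(n-v) = \sum_{i=0}^{n-1} f(i) \le n \cdot f(n)$, using that $f$ is non-decreasing. For the weight, each $(d+1)$-clique $(\{v\}, V_1,\dots,V_d)$ has weight $1 + \sum_j |V_j|$, that is, exactly one more than the weight of $(V_1,\dots,V_d)$; summing over $\mathcal{P}_v$ gives total weight at most $|\mathcal{P}_v| + g(n-v) \le f(n-v) + g(n-v)$, and summing over $v$ and substituting $i = n-v$ yields the claimed $\sum_{i=0}^{n-1}(f(i)+g(i))$.

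There is no real obstacle here; the content is entirely the link-and-recurse construction, and the rest is bookkeeping. The points that need care are: taking $V(H_v) = \{v+1,\dots,n\}$ so the hypothesis applies with parameter exactly $n-v$ (any isolated vertices of the link are harmless); the degenerate case $v = n$, where $H_v$ is edgeless and contributes nothing; and matching the index ranges after the substitution $i = n - v$ so the final sums are over $i = 0,\dots,n-1$ as stated.
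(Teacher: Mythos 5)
Your proof is correct and is essentially the same construction as the paper's: order the vertices, take the link at each vertex restricted to higher-indexed vertices, partition that $d$-uniform link by hypothesis, and prepend the singleton part $\{v\}$ to each $d$-clique. The bookkeeping for the count and weight matches as well.
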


\begin{proof}
    Let $H$ be an $n$-vertex $(d+1)$-uniform hypergraph, and order its vertices arbitrarily as $v_1, \dotsc, v_n$.
    For each $i \in [n]$, let $H_i \subseteq H$ be the $(d+1)$-uniform hypergraph defined by $V(H_i) = \{v_i, \dotsc, v_n\}$ and
    $
        E(H_i) = \{e \in E(H) : e \subseteq \{v_i, \dotsc, v_n\} \ \text{and} \ v_i \in e\}.
    $
    Note that $H_1, \dotsc, H_n$ partition $E(H)$. For each $i \in [n]$, let $H'_i$ be the $d$-uniform hypergraph defined by
    $V(H'_i) = \{v_{i+1}, \dotsc, v_n\}$ and
        $E(H'_i) = \{e : e \cup \{v_i\} \in E(H_i)\}.$
    By assumption, each $H'_i$ has a partition $\mathcal{C}'_i$ into at most $f(n-i) \leq f(n)$ $d$-cliques with total weight at most $g(n-i)$.
    Each $d$-clique $(A_1,\dots,A_d)$ in $H'_i$ can be extended to $(d+1)$-clique $(\{v_i\}, A_1,\dots,A_d)$ in $H_i$.
    Applying this to each $d$-clique in $\mathcal{C}'_i$, we obtain a family $\mathcal{C}_i$, which partitions $H_i$, consisting of at most $f(n)$ $d$-cliques, and with total weight at most $f(n-i) + g(n-i)$.
    Then, the family $\mathcal{F}_1 \cup \dotsb \cup \mathcal{F}_n$ partitions $E(H)$ and satisfies the required properties.
\end{proof}

\begin{proof}[Proof of \Cref{theorem:ces-hypergraph}]
    We prove, by induction on $d \geq 2$, that $n$-vertex $d$-graphs can be partitioned into at most $O_d(n^d/\lg^3n)$ many $d$-cliques with total weight at most $(1/d! + o_d(1))n^d/\lg n$.
    We know this is true for $d=2$, by \Cref{thm:ep}.
    Assuming it is true for $d$, we prove it for $d+1$.
    Using \Cref{lemma:ces-hypergraph-stepup} with $f(n) = O_d(n^d/\lg^3 n)$ and $g(n) = (1/d! + o_d(1)) n^d / \lg n$, we get that each $(d+1)$-graph can be partitioned into at most $nf(n) = O_d(n^{d+1}/\lg^3n)$ many $(d+1)$-cliques, and the total weight is at most \[\sum_{i=0}^{n-1}(f(i) + g(i)) = \left( \frac{1}{d!} + o_d(1) \right) \sum_{i=0}^{n-1} \frac{i^d}{\lg i},\]
    where we used that $f(n) = o_d(g(n))$.
    To analyze the last sum, we write it as $S_1 + S_2$, where $S_1$ corresponds to the first $t := \lfloor n/\lg n \rfloor$ terms, and $S_2$ to the remaining terms.
    We clearly have $S_1 \leq \sum_{i=0}^{t} t^d \leq t^{d+1} = o_d( n^{d+1} / \lg n)$.
    On the other hand, for $t \leq i \leq n$ we have $\lg i = (1 + o_d(1))\lg n$, and therefore
    $S_2 \leq \frac{(1 + o_d(1))}{d! \lg n} \sum_{i=t}^{n} i^d$.
    Hence, to conclude it suffices to observe that $\sum_{i=1}^{n} i^d \leq (1 + o_d(1)) \frac{1}{d+1}n^{d+1}$.
    A quick argument follows by estimating the sum using an integral: we have $i^d \leq \int_{i}^{i+1} x^d \mathrm{d}x$ by monotonicity of $x \mapsto x^d$ for $x \in [1, \infty)$, so
    \begin{align*}
    \sum_{i=1}^{n} i^d &\leq \int_{1}^{n+1} x^d \mathrm{d}x = \frac{x^{d+1}}{d+1}\Big|_1^{n+1} \leq \frac{(n+1)^{d+1}}{d+1} = \frac{n^{d+1}}{d+1} \cdot (1 + \tfrac{1}{n})^{d+1} 
    \leq \frac{n^{d+1}}{d+1} e^{(d+1)/n},
     \end{align*}
     we conclude by observing that $e^{(d+1)/n} = 1 + o_d(1)$.
\end{proof}

The above argument gives a deterministic $O(n^d)$-time algorithm to find a $d$-clique partition of weight at most $\left(\frac{1}{d!} + o_d(1)\right)\frac{n^d}{\lg n}$: in the $d=2$ case we use \Cref{alg:ep}.
For $d > 2$ we call $n$ times the $O(n^{d-1})$-time algorithm for $(d-1)$-graphs, as done in the proof of \Cref{lemma:ces-hypergraph-stepup}.

\subsection{Optimal Erd\H{o}s--Pyber for hypergraphs}

To complete the proof of \Cref{theorem:ces-hypergraph-optimal} we need to show the bound $\lmp_d(n) \leq \left(\frac{1}{d!} + o_d(1)\right) \frac{n^{d-1}}{\lg n}$ for each $d \geq 2$.
This is the content of \Cref{theorem:ep-hypergraph-upper}, for which we need some preparations.

First, let us analyze why the proof of~\Cref{theorem:ces-hypergraph} in the preceding subsection does not allow for a good upper bound on the number of $d$-cliques an arbitrary vertex $v$ participates in. 
Note that, if we unfold the induction from~\Cref{lemma:ces-hypergraph-stepup}, the constructed $d$-cliques have exactly two parts of size larger than $1$, and $d-2$ parts of size $1$. The construction can be thought of as creating, for each subset $S := \{s_1, \ldots, s_{d-2}\} \subset V(H)$ of $d-2$ vertices, a ``link graph'' $G_S$ whose edges are the pairs $\{v_i, v_j\}$ such that $\{v_i, v_j\} \cup S \in E(H)$. Then, each of these link graphs $G_S$ can be partitioned into bicliques $(L_i, R_i)$ using~\Cref{thm:ep},  resulting in $d$-cliques $(\{s_1\}, \ldots, \{s_{d-2}\}, L_i, R_i)$. For these to be an actual partition of $E(H)$, it is required that each hyperedge gets placed into exactly one link graph, which~\Cref{lemma:ces-hypergraph-stepup} ensures by using a fixed ordering of the vertices and placing a hyperedge in the link graph corresponding to its first $d-2$ vertices in the ordering. Note that the load (i.e., number of $d$-cliques) of every vertex $v$ is thus coming from two sources. First, there are the $d$-cliques in link graphs $G_{S}$ where $v \in S$, which for a fixed $S$ are exactly the bicliques into which $G_S$ gets partitioned by~\Cref{thm:ep}, of which there are $O(n^2/\lg^3 n)$.
On the other hand, there are the $d$-cliques in link graphs for which $v \not\in S$, which for a fixed $S$ are counted by the load of $v$ in the biclique partition of $G_{S}$; this is at most $(1+o(1))\frac{n}{2\lg n}$. The former are not many: there are at most $\binom{n-1}{d-3} = O(n^{d-3})$ possibilities for $S \ni v$, and therefore their associated load is at most $O(n^{d-3} \cdot n^2/\lg^3 n) = O(n^{d-1}/\lg^3 n) = o_d(\frac{n^{d-1}}{\lg n})$. The bottleneck is when $v \not\in S$, since then if $v$ is the last vertex in the fixed ordering, there are about $\binom{n-1}{d-2}$ choices for $S$, and thus the upper bound we get on the load of $v$ is at most
\[
\binom{n-1}{d-2} \cdot (1+o(1))\frac{n}{2\lg n} \leq (1 + o(1))\frac{n^{d-1}}{2(d-2)!\lg n},
\]
which is not good enough; that bound is only a factor $2$ better than the upper bound of~\cite{ep-hypergraph}, and this factor comes only from our improvement for graphs. The cause of this issue is that the strategy used to decide in which link graph to place each hyperedge is very ``inequitable''; vertices that come late in the ordering get too many, and the ones that come early get too few. This motivates the design of equitable strategies for placing hyperedges into link graphs. 

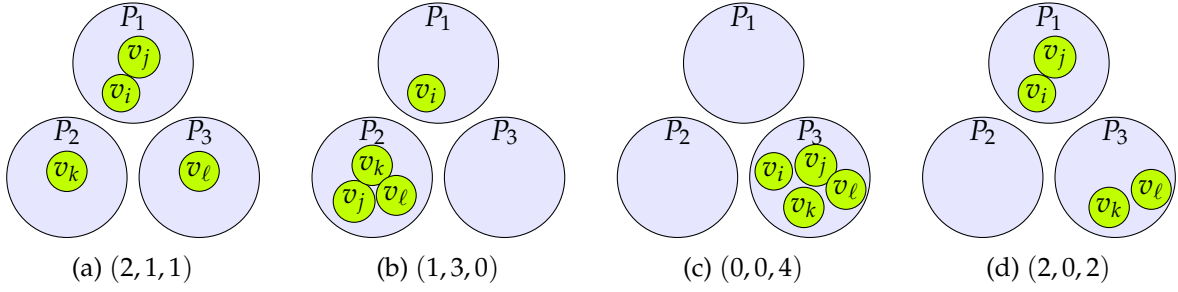
\begin{figure}
    \begin{subfigure}{0.24\linewidth}
        \centering
        \begin{tikzpicture}[scale=0.8]
        \def\r{1cm}
    
        \draw[fill=blue, fill opacity=0.1] (0,0) circle (\r);        
        \draw[fill=blue, fill opacity=0.1] (2.2,0) circle (\r);      
        \draw[fill=blue, fill opacity=0.1] (1.1,1.9) circle (\r);    
    
      \node (p1) at (0, 0.75) {$P_2$};
      \node (p2) at (1.1, 2.65) {$P_1$};
      \node (p3) at (2.2, 0.75) {$P_3$};
    
      \node[draw,circle,fill=lime,inner sep=1pt,minimum size=1pt] (v1) at (0.9, 1.4) {$v_i$};
    \node[draw,circle,fill=lime,inner sep=1pt,minimum size=1pt] (v2) at (1.2, 2.0) {$v_j$};
      \node[draw,circle,fill=lime,inner sep=1pt,minimum size=1pt] (v3) at (0, 0.1) {$v_k$};
     \node[draw,circle,fill=lime,inner sep=1pt,minimum size=1pt] (v4) at (2.2, 0.1) {$v_\ell$};
    \end{tikzpicture}
    \caption{$(2, 1, 1)$}\label{subfig:d4-case1}
    \end{subfigure}
    \begin{subfigure}{0.24\linewidth}
        \centering
        \begin{tikzpicture}[scale=0.8]
        \def\r{1cm}
    
        \draw[fill=blue, fill opacity=0.1] (0,0) circle (\r);        
        \draw[fill=blue, fill opacity=0.1] (2.2,0) circle (\r);      
        \draw[fill=blue, fill opacity=0.1] (1.1,1.9) circle (\r);    
    
      \node (p1) at (0, 0.75) {$P_2$};
      \node (p2) at (1.1, 2.65) {$P_1$};
      \node (p3) at (2.2, 0.75) {$P_3$};
    
      \node[draw,circle,fill=lime,inner sep=1pt,minimum size=1pt] (v1) at (0.9, 1.4) {$v_i$};
    \node[draw,circle,fill=lime,inner sep=1pt,minimum size=1pt] (v2) at (-0.3, -0.4) {$v_j$};
      \node[draw,circle,fill=lime,inner sep=1pt,minimum size=1pt] (v3) at (0, 0.2) {$v_k$};
     \node[draw,circle,fill=lime,inner sep=1pt,minimum size=1pt] (v4) at (0.4, -0.3) {$v_\ell$};
    \end{tikzpicture}
    \caption{$(1,3,0)$}\label{subfig:d4-case2}
    \end{subfigure}
    \begin{subfigure}{0.24\linewidth}
        \centering
        \begin{tikzpicture}[scale=0.8]
        \def\r{1cm}
    
        \draw[fill=blue, fill opacity=0.1] (0,0) circle (\r);        
        \draw[fill=blue, fill opacity=0.1] (2.2,0) circle (\r);      
        \draw[fill=blue, fill opacity=0.1] (1.1,1.9) circle (\r);    
    
      \node (p1) at (0, 0.75) {$P_2$};
      \node (p2) at (1.1, 2.65) {$P_1$};
      \node (p3) at (2.2, 0.75) {$P_3$};
    
      \node[draw,circle,fill=lime,inner sep=1pt,minimum size=1pt] (v1) at (1.6, 0.1) {$v_i$};
    \node[draw,circle,fill=lime,inner sep=1pt,minimum size=1pt] (v2) at (2.3, 0.2) {$v_j$};
      \node[draw,circle,fill=lime,inner sep=1pt,minimum size=1pt] (v3) at (2.1, -0.5) {$v_k$};
     \node[draw,circle,fill=lime,inner sep=1pt,minimum size=1pt] (v4) at (2.8, -0.2) {$v_\ell$};
    \end{tikzpicture}
    \caption{$(0,0,4)$}\label{subfig:d4-case3}
    \end{subfigure}
      \begin{subfigure}{0.24\linewidth}
        \centering
        \begin{tikzpicture}[scale=0.8]
        \def\r{1cm}
    
        \draw[fill=blue, fill opacity=0.1] (0,0) circle (\r);        
        \draw[fill=blue, fill opacity=0.1] (2.2,0) circle (\r);      
        \draw[fill=blue, fill opacity=0.1] (1.1,1.9) circle (\r);    
    
      \node (p1) at (0, 0.75) {$P_2$};
      \node (p2) at (1.1, 2.65) {$P_1$};
      \node (p3) at (2.2, 0.75) {$P_3$};
    
       \node[draw,circle,fill=lime,inner sep=1pt,minimum size=1pt] (v1) at (0.9, 1.4) {$v_i$};
    \node[draw,circle,fill=lime,inner sep=1pt,minimum size=1pt] (v2) at (1.2, 2.0) {$v_j$};
      \node[draw,circle,fill=lime,inner sep=1pt,minimum size=1pt] (v3) at (2.1, -0.5) {$v_k$};
     \node[draw,circle,fill=lime,inner sep=1pt,minimum size=1pt] (v4) at (2.8, -0.2) {$v_\ell$};
    \end{tikzpicture}
    \caption{$(2,0,2)$}\label{subfig:d4-case4}
    \end{subfigure}
    \caption{The 4 distributions for $d = 4$ up to permutation.}\label{fig:d4-cases}
\end{figure}
Now, before proving~\Cref{theorem:ep-hypergraph-upper} for arbitrary $d$, let us consider a particular case such as $d = 4$ to motivate the conceptual design of our proof. In this case, the main question is \emph{in which link graph should we place a given hyperedge $e := \{v_i, v_j, v_k, v_\ell\}$?} Suppose we first partition $V(H)$ into $d-1 = 3$ parts $P_1, P_2, P_3$ of roughly equal size. Then, a ``distribution'' tuple $(|P_1 \cap e|, |P_2 \cap e|, |P_3 \cap e|)$ will capture how $e$ is distributed along the parts, as illustrated in~\Cref{fig:d4-cases}. We will construct link graphs whose vertices are completely contained within a part, since this way each link graph will have roughly $n/(d-1) = n/3$ vertices instead of $n$. In consequence, the case presented in~\Cref{subfig:d4-case1} has only one possible choice: to place the edge $\{v_i, v_j\}$ in the link graph $G_{\{v_k, v_\ell\}}$. To handle the case presented in~\Cref{subfig:d4-case2}, however, we will consider a 3-uniform link hypergraph $H_{\{v_i\}}$, in which the edge $\{v_j, v_k, v_\ell\}$ will be placed, and then recursion with $d = 3$ over $H_{\{v_i\}}$ will take care of splitting hyperedges such as $\{v_j, v_k, v_\ell\}$ in such a way that each of those $3$ vertices ends up with the same load. Next, the case presented in~\Cref{subfig:d4-case3} will be handled by recursion with $n' := |P_3| \approx n/3$, which will further partition $P_3$. The last case, in~\Cref{subfig:d4-case4}, is the most interesting 
since for the first time there is a choice: shall we place $\{v_i, v_j\}$ into $G_{\{v_k, v_\ell\}}$ (thus loading $P_1$ more) or $\{v_k, v_\ell\}$ into $G_{\{v_i, v_j\}}$ (thus loading $P_3$ more)? The important aspect here is that we need to make a choice that is not biased towards any particular part, so we can choose to e.g., load $P_3$ more for distribution $(2, 0, 2)$, as long as $P_1$ gets more load from $(2, 2, 0)$ and $P_2$ get more from $(0, 2, 2)$.

Note that, a key idea used here is that the decision of what to do with $e$ is purely based on its distribution tuple, and not on the precise identity of its vertices. Thus, the number of link graphs in which edge $\{v_i, v_j\}$ is placed by a distribution tuple $(2,1,1)$, as in~\Cref{subfig:d4-case1}, is simply $|P_2| \cdot |P_3| \approx n^2/9$, and each of those incurs in load roughly $|P_1|/(2\lg |P_1|) \approx n/(6 \lg n)$ by~\Cref{thm:ep}, thus resulting in load $n^3/(54\lg n)$ associated to tuples $\{(2,1,1), (1,2,1), (1,1,2)\}$ for each vertex. In turn, when considering a distribution tuple like $(1,3,0)$, the vertices in $P_2$ will be part of $|P_1| \approx n/3$ link hypergraphs like $H_{\{v_i\}}$, and by inductive hypothesis, this will result in a load of roughly $\frac{n}{3} \cdot \frac{(n/3)^{2}}{3! \lg n} = n^3/(162\lg n)$, which after accounting for the symmetric tuple $(0,3,1)$, which also puts load on $P_2$, this case results in $n^3/(81\lg n)$ per vertex. When considering a distribution tuple like $(0, 0, 4)$, as in~\Cref{subfig:d4-case3}, by inductive hypothesis this will incur in load roughly $(n/3)^3/(4! \lg n) = n^{3}/(648 \lg n)$. Finally, when considering distributions in $\{(0,2,2), (2,0,2), (2,2,0) \}$, each vertex $v$ only needs to consider the single one that is loaded to its part; this yields a load of roughly $\binom{n/3}{2}\cdot \frac{n/3}{2 \lg n} \approx n^3/(108\lg n)$. Aggregating the different cases, the load of every vertex is at most
\[
(1+o(1))\frac{n^3}{\lg n}\left(\frac{1}{54} + \frac{1}{81} + \frac{1}{648} + \frac{1}{108}\right) = \left(\frac{1}{4!} + o(1)\right) \frac{n^3}{\lg n},
\]
as desired.

It is not obvious, however, how to generalize this reasoning for arbitrary $d$: how should we proceed with distributions like $(2,2,3,3,0,0,0,0,0)$ or $(2,2,2,2,1,1,0,0,0)$ for $d=10$? This is where the notion of equitability comes in. Every potential hyperedge (i.e., subset of $d$ vertices) corresponds to a distribution $\vec{x} = (x_1, \ldots, x_{d-1})$ according to how many vertices are in each part, and given such a distribution, we must make some choice of which part is load-bearing, which we call $f(\vec{x}) \in [d-1]$. Given a random potential hyperedge, we want the probability that each part $i \in [d-1]$ is load-bearing in the corresponding distribution to be roughly $1/(d-1)$; more specifically, we want
\[
    \sum_{(x_1,\dots,x_{d-1}) \in f^{-1}(i)} \frac{\binom{d}{x_1,\dots,x_{d-1}}}{(d-1)^d} = \frac{1}{d-1}
\]
for each $i \in [d-1]$. This motivates the following definition.

\begin{definition}
    Let $d \ge 2$ be an integer. A \emph{$d$-distribution} is a tuple $(x_1,\dots,x_{d-1}) \in \mathbb{N}^{d-1}$ such that
    \(
        x_1 + \ldots + x_{d-1} = d.
    \)
    Let $\mathcal{S}_d$ be the set of $d$-distributions. A \emph{selection strategy} is a function $f : \mathcal{S}_d \to [d-1]$ such that $f(x_1,\dots,x_{d-1}) = i$ for some $i$ such that $x_i \ge 2$. Then, we say a selection strategy is \emph{equitable} if for all $i,j \in [d-1]$,
    \[
        \sum_{(x_1,\dots,x_{d-1}) \in f^{-1}(i)} \binom{d}{x_1,\dots,x_{d-1}} = \sum_{(x_1,\dots,x_{d-1}) \in f^{-1}(j)} \binom{d}{x_1,\dots,x_{d-1}}.
    \]
\end{definition}

\begin{lemma}\label{lemma:exists-equitable}
    For every $d \ge 2$, there exists an equitable selection strategy.
\end{lemma}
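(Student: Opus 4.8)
The plan is to exploit the cyclic symmetry of $\mathcal{S}_d$ rather than its full symmetric-group symmetry. Let $\tau\colon \mathcal{S}_d \to \mathcal{S}_d$ be the cyclic shift of coordinates, defined by $(\tau\vec{x})_i = x_{i-1}$ with indices read modulo $d-1$; this is a bijection, and as a permutation of the coordinates it has order $d-1$. The first and decisive step is to check that $\tau$ acts \emph{freely} on $\mathcal{S}_d$, i.e.\ every orbit has size exactly $d-1$. Indeed, the orbit size $s$ of a distribution $\vec x$ divides $d-1$ and satisfies $\tau^s\vec x = \vec x$, which means $x_i = x_{i-s}$ for all $i$; hence $\vec x$ is constant on the $s$ cosets of the subgroup $\langle s\rangle \le \mathbb{Z}/(d-1)\mathbb{Z}$ (each of size $(d-1)/s$), so summing coordinates gives $d = \frac{d-1}{s}\cdot C$ for an integer $C$. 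Thus $\frac{d-1}{s}$ divides $d$ and also $d-1$, hence divides $\gcd(d,d-1)=1$, forcing $s=d-1$. This is exactly the point where $\gcd(d-1,d)=1$ is used.

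Two auxiliary facts then feed the construction. First, the multinomial weight $w(\vec x) := \binom{d}{x_1,\dots,x_{d-1}}$ is invariant under every permutation of coordinates, in particular under $\tau$, so all $d-1$ elements of a $\tau$-orbit have the same weight. Second, by pigeonhole every $\vec x \in \mathcal{S}_d$ has some coordinate $x_i \ge 2$, so the eligible set $A(\vec x) := \{i : x_i \ge 2\}$ is nonempty. Now I would build $f$ orbit by orbit: for each $\tau$-orbit, pick a representative $\vec y$ and any $a \in A(\vec y)$, and set $f(\tau^j\vec y) := a+j \pmod{d-1}$ for $j = 0,\dots,d-2$ (identifying $[d-1]$ with $\mathbb{Z}/(d-1)\mathbb{Z}$); freeness of the action makes this well defined on the orbit.

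It remains to verify the two required properties, which I expect to be routine. For the selection-strategy condition, note that $(\tau^j\vec y)_{a+j} = y_{(a+j)-j} = y_a \ge 2$, so the chosen index $f(\tau^j\vec y) = a+j$ always points to a coordinate that is at least $2$ — and, pleasantly, this only has to be checked at the representative. For equitability, fix a target $i \in [d-1]$: within each orbit there is exactly one $j$ with $a+j \equiv i$, so each orbit sends precisely one element to $f^{-1}(i)$, and that element has weight $w(\vec y)$ independent of $i$. Summing over orbits gives $\sum_{\vec x \in f^{-1}(i)} \binom{d}{x_1,\dots,x_{d-1}} = \sum_{\text{orbits } O} w(\vec y_O)$ for every $i$ (in fact equal to $(d-1)^{d-1}$, since the total weight is $(d-1)^d$), which is exactly the equitability condition.

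The main obstacle is conceptual rather than computational: identifying that the \emph{cyclic} group is the right symmetry to average over. The naive attempt to balance within $S_{d-1}$-orbits fails because an $S_{d-1}$-orbit need not have size divisible by $d-1$ (e.g.\ the orbit of $(2,2,0,\dots,0)$ has size $\binom{d-1}{2}$), so one cannot split it evenly across the $d-1$ fibers; and one must still respect the constraint $f(\vec x) \in A(\vec x)$. The cyclic shift resolves both issues at once — freeness (from $\gcd(d-1,d)=1$) guarantees each orbit contributes one element per fiber, and the identity $(\tau^j\vec y)_{a+j}=y_a$ reduces the eligibility check to the single representative. Once this is seen, the rest is bookkeeping.
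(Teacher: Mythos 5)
Your proof is correct and follows essentially the same route as the paper: both exploit the cyclic action on $\mathcal{S}_d$, use $\gcd(d,d-1)=1$ to show every orbit has full size $d-1$ (the paper phrases this as primitivity of words, you phrase it as freeness of the $\mathbb{Z}/(d-1)$ action, but the divisibility argument is identical), and then define $f$ orbit by orbit so that each orbit contributes exactly one element to each fiber. The only cosmetic difference is that the paper normalizes the orbit representative so the eligible coordinate is in position $1$, whereas you carry an arbitrary eligible index $a$; the content is the same.
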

\begin{proof}
    Fix an integer $d \ge 2$. Given a $d$-distribution $(x_1,\dots,x_{d-1})$, let $[(x_1,\dots,x_{d-1})]$ be its set of cyclic rotations; that is,
    \(
        [(x_1,\dots,x_{d-1})] = \{(x_i, \dots, x_{d-1}, x_1, \dots,x_{i-1}) : i \in [d-1]\}.
    \)
    Note that $[\cdot]$ partitions $\mathcal{S}_d$ into equivalence classes.

    First, we claim that for every $d$-distribution $(x_1,\dots,x_{d-1})$, we have $|[(x_1,\dots,x_{d-1})]| = d-1$. By a standard result from combinatorics on words (see, e.g., \cite[Section~1.2.1]{lothaire-alg}), this is equivalent to the claim that every $d$-distribution is \emph{primitive}, meaning that it is not of the form $(x_1,\dots,x_j)^n$ for some $j \ge 1$ and $n \ge 2$, where the exponent means repeated concatenation. And, indeed, if $(x_1,\dots,x_{d-1}) = (x_1,\dots,x_j)^n$, then
    \(
        n \cdot \left(\sum_{i=1}^{j} x_i \right) = \sum_{i=1}^{d-1} x_i = d.
    \)
    Since we also have $n \cdot j = d-1$, we have that $n$ divides both $d$ and $d-1$, so $n=1$, proving the claim.

    Now, we construct our selection strategy $f$ as follows. For each equivalence class $[\vec{x}] \in \mathcal{S}_d / [\cdot]$, pick a representative $(x_1,\dots,x_{d-1}) \in [\vec{x}]$ such that $x_1 \ge 2$. Then, let
    \(
        f(x_1,\dots,x_{d-1}) = 1
    \)
    and
    \(
        f(x_i, \dots, x_{d-1}, x_1, \dots,x_{i-1}) = 1+(d-i)
    \)
    for all $i \in [2,d-1]$. We have that the $f(\vec{x})$-th entry of $\vec{x}$ equals $x_1 \ge 2$ for each $\vec{x} \in [(x_1,\dots,x_{d-1})]$, so $f$ is a selection strategy. Furthermore, for each equivalence class $[\vec{x}] \in \mathcal{S}_d / [\cdot]$ and $i \in [d-1]$, we have
    \(
        |f^{-1}(i) \cap [\vec{x}]| = 1.
    \)
    Therefore, the selection strategy is equitable.
\end{proof}

\begin{theorem} \label{theorem:ep-hypergraph-upper}
    We have $\lmp_d(n) \le \left(\frac{1}{d!} + o_d(1)\right) \frac{n^{d-1}}{\lg n}$, and there is a $d$-clique partition realizing this bound in which the number of $d$-cliques is $O_d(n^d/\lg^3(n))$. Furthermore, a $d$-clique partition realizing this upper bound can be constructed deterministically in time $O(n^d / d!)$.
\end{theorem}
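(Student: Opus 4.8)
\textbf{The plan} is to prove all three assertions at once by a double induction: an \emph{outer} induction on $d$, with base case $d=2$ supplied by \Cref{thm:ep}, and, for each fixed $d\ge 3$, an \emph{inner} induction on $n$. Throughout, fix an equitable selection strategy $f\colon\mathcal S_d\to[d-1]$ (which exists by \Cref{lemma:exists-equitable}) and write $m:=d-1$. Given an $n$-vertex $d$-graph $H$ with $n$ above a threshold $n_0(d)$ — below it one uses the trivial partition into $\binom nd = O_d(1)$ $d$-cliques with singleton parts — partition $V(H)$ into $m$ parts $P_1,\dots,P_m$ of sizes $\lfloor n/m\rfloor$ or $\lceil n/m\rceil$. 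For $e\in E(H)$, let $\vec x(e)=(|e\cap P_1|,\dots,|e\cap P_m|)\in\mathcal S_d$ be its distribution, put $i:=f(\vec x(e))$, so that $x_i:=|e\cap P_i|\ge 2$, and route $e$ to the link hypergraph $H_{S}$ on $P_i$ indexed by the tail $S:=e\setminus P_i$: namely the $x_i$-uniform hypergraph on vertex set $P_i$ whose edges are the $x_i$-sets $e'\subseteq P_i$ with $e'\cup S\in E(H)$. I would then recursively partition each $H_{S}$ — by \Cref{thm:ep} when $x_i=2$, by the inner hypothesis when $x_i=d$ (then $S=\varnothing$ and $H_\varnothing=H[P_i]$ has fewer than $n$ vertices), and by the outer hypothesis when $2<x_i<d$ — and promote every resulting $x_i$-clique $(A_1,\dots,A_{x_i})$ (whose parts lie in $P_i$) to the $d$-clique with parts $A_1,\dots,A_{x_i}$ together with the singletons $\{s\}$, $s\in S$. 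A short verification shows that each $e$ is covered exactly once: its pair $(S,i)$ is determined by $e$, inside $H_{S}$ it is covered uniquely by the inductive hypothesis, and conversely any $d$-clique in the output covering $e$ must arise from this $H_{S}$ (a $d$-clique of $H_{S',i'}$ covering $e$ forces $i'=f(\vec x(e))$ and $S'=e\setminus P_{i'}$).

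The heart of the argument is the load estimate. I would fix a vertex $v\in P_{i_0}$ and break its load according to the distribution $\vec x$ of the hyperedge underlying each $d$-clique containing $v$. If $v$ lies in the load-bearing part, i.e.\ $f(\vec x)=i_0$, then $v$ is a vertex of every relevant link hypergraph $H_S$ on $P_{i_0}$, of which there are $\prod_{k\ne i_0}\binom{|P_k|}{x_k}=(1+o(1))\prod_{k\ne i_0}\tfrac{(n/m)^{x_k}}{x_k!}$, and in each its load is at most $\lmp_{x_{i_0}}(|P_{i_0}|)\le(1+o(1))\tfrac1{x_{i_0}!}\tfrac{(n/m)^{x_{i_0}-1}}{\lg n}$ by the inductive hypothesis; multiplying, the contribution of $\vec x$ is $(1+o(1))\tfrac{n^{d-1}}{m^{d-1}\lg n}\cdot\tfrac1{\prod_k x_k!}=(1+o(1))\tfrac{n^{d-1}}{m^{d-1}\lg n}\cdot\tfrac1{d!}\binom{d}{x_1,\dots,x_m}$. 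Summing over all $\vec x$ with $f(\vec x)=i_0$ and invoking equitability — which, with $\sum_{\vec x\in\mathcal S_d}\binom{d}{x_1,\dots,x_m}=m^{d}$, gives $\sum_{f(\vec x)=i_0}\binom{d}{x_1,\dots,x_m}=m^{d-1}$ — yields a total of $(1+o(1))\tfrac{n^{d-1}}{d!\,\lg n}$. If instead $f(\vec x)\ne i_0$, then $v$ occurs in a $d$-clique of distribution $\vec x$ only as a singleton, and a crude count ($O_d(n^{d-1-x})$ tails containing $v$ times $O_d(n^{x}/\lg^3 n)$ cliques per link hypergraph of uniformity $x<d$) bounds this by $O_d(n^{d-1}/\lg^3 n)=o_d(n^{d-1}/\lg n)$. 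Hence $\ell(v)\le(\tfrac1{d!}+o_d(1))\tfrac{n^{d-1}}{\lg n}$; I would make this precise by carrying an error term of order $n^{d-1}\lg\lg n/\lg^2 n$, large enough to absorb the base-case error inherited from \Cref{thm:ep}, the discrepancies $\lg(n/m)=(1+o(1))\lg n$ and $|P_k|=(1+o(1))n/m$, and the tail terms, and self-reproducing because the only same-dimension recursive call is the single one on $H[P_{i_0}]$ of size $\le\lceil n/m\rceil$.

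For the number of $d$-cliques I would use the recursion $N_d(n)\le m\,N_d(\lceil n/m\rceil)+O_d(n^d/\lg^3 n)$, the error term accounting for all distributions other than the $d$-vertices-in-one-part one (each contributes $O_d(n^{d-x})$ link hypergraphs of uniformity $x<d$ with $O_d(n^{x}/\lg^3 n)$ cliques apiece); with base case $N_2(n)=O(n^2/\lg^3 n)$ from \Cref{thm:ep} and $m^{1-d}\le\tfrac14$ for $d\ge 3$, this solves to $N_d(n)=O_d(n^d/\lg^3 n)$. I would handle the running time the same way: routing the $\le\binom nd$ potential hyperedges and initializing the link-hypergraph structures costs $\sum_{\vec x}\prod_{k}\binom{|P_k|}{x_k}=(1+o(1))\tfrac{(n/m)^d}{d!}\sum_{\vec x}\binom{d}{x_1,\dots,x_m}=(1+o(1))\tfrac{n^d}{d!}$ at the top level, and the ensuing recursion for $T_d(n)$ again converges, giving $T_d(n)=O(n^d/d!)$.

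\textbf{The main obstacle} is forcing the leading constant to be exactly $1/d!$. Two things must cooperate: equitability of $f$ has to make $\sum_{f(\vec x)=i_0}\binom{d}{x_1,\dots,x_m}$ equal to $m^{d-1}$ for \emph{every} part $P_{i_0}$ (else some vertex is overloaded), and the lone degenerate distribution $(0,\dots,d,\dots,0)$ — which is not split but instead triggers the inner recursion on $H[P_{i_0}]$ — must, through that recursion, contribute precisely the $\tfrac1{m^{d-1}}\cdot\tfrac1{d!}$ that the split distributions leave unaccounted for; in the inductive phrasing this is automatic, but at bottom it is the arithmetic identity $\tfrac{m^{d-1}-1}{m^{d-1}}\cdot\tfrac1{1-m^{1-d}}=1$. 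The secondary, purely technical, difficulty is corralling the several lower-order error sources (unequal parts, the $\lg(n/m)$-versus-$\lg n$ drift across levels, the base cases of both inductions, and the $O_d(n^{d-1}/\lg^3 n)$ tail terms) into one self-reproducing bound; all of these are genuinely $o_d(n^{d-1}/\lg n)$, so none touches the main term.
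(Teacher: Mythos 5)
Your proposal is correct and follows essentially the same approach as the paper: partition $V(H)$ into $d-1$ parts, route each hyperedge to a link hypergraph indexed by its distribution tuple and an equitable selection strategy (\Cref{lemma:exists-equitable}), recurse via a double induction with base case \Cref{thm:ep}, and bound the load using equitability together with the multinomial identity $\sum_{\vec x}\binom{d}{x_1,\dots,x_{d-1}}=(d-1)^d$. The paper carries the $o_d(1)$ error terms explicitly as concrete functions $r_d(n)$ and $h_d(n)$ of order $n^{d-1}\lg\lg n/\lg^2 n$ and $n^d/\lg^3 n$ (verified in Claims 4.6--4.9), precisely the bookkeeping you identify as the remaining technical work, and it establishes the $O(n^d/d!)$ running time by the same induction closed cleanly via the multinomial theorem.
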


\begin{proof}
    More precisely, we prove that for every $d \ge 2$, there are functions $r_d(n) = o_d(n^{d-1}/\lg n)$ and $h_d(n) = O_d(n^d/\lg^3(n))$ such that every $d$-uniform hypergraph $H$ on $n$ vertices can be written as an edge-disjoint union of $d$-cliques such that each vertex is in at most
    \begin{equation} \label{eq-ep-bound}
       g_d(n) := \frac{1}{d!} \cdot \frac{n^{d-1}}{\lg n} + r_d(n) =  \left(\frac{1}{d!} + o_d(1)\right) \frac{n^{d-1}}{\lg n}
    \end{equation}
    $d$-cliques, where the number of $d$-cliques is at most $h_d(n)$.
    Concretely, we will take  
    \[
    r_d(n) = \frac{4^d}{d!} \cdot 2^{d 2^{100d^2}}  n^{d-1} \frac{\lg \lg n}{\lg^2 n}, \quad h_d(n) = 2^{d 2^{100d^2}} \frac{n^d}{\lg^3 n},
    \]
    which essentially amounts to assuming that $n$ is much larger than $d$, since for $n \leq 2^{2^{100d^2}}$ we have 
    \[
    n^d \leq \left(2^{2^{100d^2}}\right)^{d} = 2^{d2^{100d^2}} \leq \min\{ 
    r_d(n), h_d(n)\},
    \]
    and thus the statement holds trivially. We can thus assume from now on that $n > 2^{2^{100d^2}}$, noting that have made no effort to optimize the dependency on $d$; taking a doubly-exponential function so that even $\lg \lg n$ is much larger than $d$.


    
    The proof of the theorem proceeds by strong induction on $d$. The base case $d=2$ corresponds to~\Cref{thm:ep}, and for the inductive case, with $d \geq 3$, the proof is by strong induction on $n$.
    
    First, we partition the vertices of $H$ into parts $P_1,\dots,P_{d-1}$ such that $||P_i| - |P_j|| \le 1$ for all $i,j \in [d-1]$. Let $f$ be an equitable selection strategy on $\mathcal{S}_d$, which exists by~\Cref{lemma:exists-equitable}. For each $d$-distribution $\vec{x} = (x_1,\dots,x_{d-1}) \in \mathcal{S}_d$, we create a set of $d$-clique covers as follows. Let $i = f(\vec{x})$, and
    \[
        \mathcal{A}_{\vec{x}} := \{S \subseteq V(H) \setminus P_i : |S \cap P_j| = x_j \ \text{for all} \ j \in [d-1] \setminus \{i\}\}.
    \]
    Note that $\mathcal{A}_{\vec{x}}$ consists of sets of vertices of size $k-x_i$ each.
    For each $S \in \mathcal{A}_{\vec{x}}$, let $H_{\vec{x},S}$ be the $x_i$-uniform hypergraph defined by
    \(
        V(H_{\vec{x},S}) L= P_i\) 
       and
        \( E(H_{\vec{x},S}) = \{T : T \subseteq P_i,\; S \cup T \in E(H)\}.\)
    Let $\mathcal{C}_{\vec{x}, S}$ be an $x_i$-clique partition of $H_{\vec{x},S}$ such that each vertex of $P_i$ is in at most $g_{x_i}(|P_i|)$ many $x_i$-cliques and $|\mathcal{C}_{\vec{x}, S}| \le h_{x_i}(|P_i|)$, which exists by induction.
    Given an $x_i$-clique $(A_1,\dots,A_{x_i})$ in $\mathcal{C}_{\vec{x}, S}$, we can extend it to a $d$-clique $S \oplus \{A_1,\dots,A_{x_i}\} \subseteq H$, which is defined as the $d$-clique in $H$ whose parts are $\{\{v\} : v \in S\} \cup \{A_1,\dots,A_{x_i}\}$. Let
    \[
        \mathcal{C}'_{\vec{x}, S} = \{S \oplus \{A_1,\dots,A_{x_i}\} : (A_1,\dots,A_{x_i}) \in \mathcal{C}_{\vec{x},S}\} \qquad \text{and} \qquad 
        \mathcal{C} := \bigcup_{\substack{\vec{x} \in \mathcal{S}_d \\ S \in \mathcal{A}_{\vec{x}}}} \mathcal{C}'_{\vec{x}, S}.
    \]
    First, we argue that $\mathcal{C}$ is a $d$-clique partition of $G$.
    Let $e \in E(H)$, and let $\vec{x} = (x_1,\dots,x_{d-1}) \in \mathcal{S}_d$ be such that $e \cap P_i = x_i$ for all $i \in [d-1]$. Let $i = f(\vec{x})$. Then, there is a unique $S \in \mathcal{A}_{\vec{x}}$ such that $S \subseteq e$, and $e$ is uniquely covered by a $d$-clique in $\mathcal{C}'_{\vec{x},S}$. On the other hand, $d$ is not covered by any $d$-clique in $\mathcal{C}'_{\vec{x}',S'}$ for any $(\vec{x'},S') \neq (\vec{x},S)$.

    Next, we argue that each vertex is in at most $g_d(n)$ many $d$-cliques from $\mathcal{C}$.
    Fix a vertex $v \in V(H)$, and suppose that $v \in P_i$. There are two types of $d$-cliques from $\mathcal{C}$ that we consider:
    \begin{enumerate}
        \item (\textbf{Core}) These are the $d$-cliques in some $\mathcal{C}'_{\vec{x}, S}$ for which $f(\vec{x}) = i$.
        \item (\textbf{Auxiliary}) These are the $d$-cliques in some $\mathcal{C}'_{\vec{x}, S}$ for which $f(\vec{x}) \neq i$.
    \end{enumerate}
    
   We now bound these separately, for which we will first need some mildly technical conditions on the functions $r_d$ and $h_d$:
\begin{claim}\label{claim:properties}
        The functions $r_d$ and $h_d$ satisfy the following properties.
        \begin{enumerate}
        \item $r_d(n) \geq 4\frac{n}{d} r_{d-1}(n)$, for every $n, d$.
        \item $r_d(n+1) \geq r_d(n)$ and $h_d(n+1) \geq h_d(n)$, for every $n, d$. 
        \item $r_d(\ceil{n/(d-1)}) \leq  1.1\frac{r_d(n)}{(d-1)^{d-1}}$, for every $n \geq 2^{2^{100d^2}}$.
    \end{enumerate}
    \end{claim}

The proof of~\Cref{claim:properties} amounts to simple calculations and is thus deferred to~Appendix~\ref{sec:appendix}.

    \begin{claim}\label{claim:inner}
        The number of core $d$-cliques containing $v$ is at  most $\frac{1}{d!} \cdot \frac{n^{d-1}}{\lg n} + \frac{2}{3}r_d(n)$.
    \end{claim}
 \begin{innerproof}{\Cref{claim:inner}}
         By the inductive hypothesis (using induction on $d$ if $x_i < d$, and on $n$ if $x_i = d$), the number of core $d$-cliques containing $v$ is at most
    \begin{multline*}
        \sum_{(x_1,\dots,x_{d-1}) \in f^{-1}(i)} \left(\prod_{j \in [d-1] \setminus \{i\}} \binom{|P_j|}{x_j}\right) \cdot g_{x_i}(|P_i|) \\
        \leq \sum_{(x_1,\dots,x_{d-1}) \in f^{-1}(i)} \left(\prod_{j \in [d-1] \setminus \{i\}} \frac{|P_j|^{x_j}}{x_j!}\right) \cdot \left( \frac{1}{x_i!} \cdot \frac{|P_i|^{x_i-1}}{\lg |P_i|} + r_{x_i}(|P_i|) \right) := \dagger.
     \end{multline*}
    Since each part $P_k$ has size at most $\lceil n / (d-1) \rceil$, and $\sum_{k=1}^{d-1} x_k = d$, we have 
    \[
    \dagger \leq \sum_{(x_1,\dots,x_{d-1}) \in f^{-1}(i)} \left(\frac{1}{x_1! \cdot \ldots \cdot x_{d-1}!}\right) \cdot \left\lceil \frac{n}{d-1} \right\rceil^{d-1} \cdot \left(\frac{1}{\lg |P_i|} + \frac{x_i!}{|P_i|^{x_i-1}} r_{x_i}(|P_i|) \right).
    \]
    We now split the last summation according to whether $x_i = d$ or not. We thus have
\begin{multline*}
\dagger \leq \left\lceil \frac{n}{d-1} \right\rceil^{d-1} \cdot\frac{1}{d!} \left(\frac{1}{\lg |P_i|} + \frac{d!}{|P_i|^{d-1}} r_{d}(|P_i|) \right) \\
+  \left\lceil \frac{n}{d-1} \right\rceil^{d-1} \cdot \sum_{\substack{(x_1,\dots,x_{d-1}) \in f^{-1}(i) \\ x_i \neq d}} \left(\frac{1}{x_1! \cdot \ldots \cdot x_{d-1}!}\right) \cdot \left(\frac{1}{\lg |P_i|} + \frac{(d-1)!}{|P_i|^{d-2}} r_{d-1}(|P_i|) \right).
\end{multline*}

Now we note that \begin{equation}\label{eq:identity}
\sum_{(x_1,\dots,x_{d-1}) \in \mathcal{S}_d} \left(\frac{d!}{x_1! \cdot \ldots \cdot x_{d-1}!}\right) = (d-1)^{d},
\end{equation}
since both sides count the number of distinct strings of length $d$ over an alphabet of size $d-1$. Then, by equitability of $f$, we have that 
\[
 \sum_{\substack{(x_1,\dots,x_{d-1}) \in f^{-1}(i)}} \left(\frac{d!}{x_1! \cdot \ldots \cdot x_{d-1}!}\right) = \frac{1}{d-1}\sum_{(x_1,\dots,x_{d-1}) \in \mathcal{S}_d} \left(\frac{d!}{x_1! \cdot \ldots \cdot x_{d-1}!}\right)  = (d-1)^{d-1}.
\]
Also, there is a unique $d$-distribution $\vec{x} = (x_1, \dotsc, x_d)$ such that $x_i = d$ (and $x_j = 0$ for all $j \neq i$), and for such $\vec{x}$ we must have $f(\vec{x}) = i$.
Therefore, 
\begin{equation}\label{eq:comb_identity}
\sum_{\substack{(x_1,\dots,x_{d-1}) \in f^{-1}(i) \\ x_i \neq d}} \left(\frac{1}{x_1! \cdot \ldots \cdot x_{d-1}!}\right) = \frac{(d-1)^{d-1}-1}{d!}.
\end{equation}

Using~\Cref{eq:comb_identity} in our last inequality regarding $\dagger$, and introducing notation $w_d(x) := \frac{d!}{x^{d-1}} r_{d}(x)$,
we have
\[
\dagger \leq \frac{\ceil{n/(d-1)}^{d-1}}{d!} \cdot \left[\frac{(d-1)^{d-1}}{\lg |P_i|} + w_d(|P_i|) + ((d-1)^{d-1}-1)w_{d-1}(|P_i|) \right].
\]
As $r_d(x) \geq 4 \frac{x}{d}r_{d-1}(x)$ by~\Cref{claim:properties}, we have that $w_{d-1}(x) \leq w_d(x) / 4$.
Using this, we get
\begin{align*}
\dagger &\leq \frac{\ceil{n/(d-1)}^{d-1}}{d!} \cdot \left(\frac{(d-1)^{d-1}}{\lg |P_i|} + w_d(|P_i|) \left[ \frac{(d-1)^{d-1}}{4} + 1 \right] \right) \\
&\leq \frac{\ceil{n/(d-1)}^{d-1}}{d!} \cdot \left(\frac{(d-1)^{d-1}}{\lg (\lfloor n/(d-1)\rfloor)} + \frac{d!}{|P_i|^{d-1}} r_d(|P_i|) \left[ \frac{(d-1)^{d-1}}{4} + 1 \right] \right). 
\end{align*}

As $\ceil{n/(d-1)} \leq (n+d-2)/(d-1)$,  $|P_i| \geq \floor{n/(d-1)} \geq \frac{n - d +2}{d-1}$, and $r_d$ is an increasing function,  we now have
\[
\dagger \leq \frac{(n+d-2)^{d-1}}{d! \lg(\lfloor n/(d-1)\rfloor)} + \left(\frac{n+d-2}{n-d+2}\right)^{d-1} r_d\left(\left\lceil \frac{n}{d-1} \right\rceil \right) \left( \frac{(d-1)^{d-1}}{4} + 1 \right),
\]
and since $n \geq 2^{2^{200d^2}}$, it is easy to see that $\left(\frac{n+d-2}{n-d+2}\right)^{d-1} \leq 1.1$, from where
\begin{align*}
\dagger &\leq \frac{(n+d-2)^{d-1}}{d! \lg(\lfloor n/(d-1)\rfloor)} + 1.1 \cdot 1.1 r_d(n) \left(\frac{1}{4} + \frac{1}{(d-1)^{d-1}} \right) \\
&\leq \frac{(n+d-2)^{d-1}}{d! \lg(\lfloor n/(d-1)\rfloor)} + 1.1 \cdot 1.1 r_d(n) \left(\frac{1}{4} + \frac{1}{4} \right) \tag{As $d \geq 3$}\\
&\leq  \frac{(n+d-2)^{d-1}}{d! \lg(\lfloor n/(d-1)\rfloor)} + 0.61 r_d(n).
\end{align*}
To conclude, it suffices to prove that
\(
\frac{(n+d-2)^{d-1}}{d! \lg(\floor{n/(d-1)})} \leq \frac{n^{d-1}}{d!\lg n } + 0.05 r_d(n).
\)
First, use that $(n+d-2)^{d-1} \leq (n+d)^{d-1} \leq n^{d-1} + d^{d+1} \cdot n^{d-2}$, where the last inequality follows from upper bounding all but the first term in the binomial expansion of $(n+d)^{d-1}$. It is then easy to see that $d^{d+1} \cdot n^{d-2} \leq \frac{1}{100} r_d(n)$. Thus, it only remains to prove that
\(
\frac{n^{d-1}}{d! \lg(\floor{n/(d-1)})} \leq \frac{n^{d-1}}{d!\lg n } + 0.04 r_d(n),
\)
which follows from writing
\begin{align*}
\frac{n^{d-1}}{d! \lg(\floor{n/(d-1)})} &\leq \frac{n^{d-1}}{d! (\lg(n-d+2) - \lg(d-1))} = \frac{n^{d-1}}{d! \lg n} \cdot \frac{\lg n}{\lg(n-d+2) - \lg(d-1)}\\
&= \frac{n^{d-1}}{d! \lg n} \cdot \left(1 + \frac{\lg n - \lg(n-d+2) + \lg(d-1)}{\lg(n-d+2) - \lg(d-1)} \right) \\
& \leq \frac{n^{d-1}}{d! \lg n} \cdot \left(1 + \frac{3\lg d}{\lg n} \right) =  \frac{n^{d-1}}{d! \lg n} + \frac{3 \lg d \cdot n^{d-1}}{d!\lg^2 n} \\ & \leq  \frac{n^{d-1}}{d! \lg n} + 0.01 r_d(n).\qedhere
\end{align*}
 \end{innerproof}

    \begin{claim}\label{claim:outer}
        The number of auxiliary $d$-cliques containing $v$ is at  most $ \frac{1}{3}r_d(n)$.
    \end{claim}
    The proof of~\Cref{claim:outer} is similar to the previous one, and thus deferred to~Appendix~\ref{sec:appendix}.
    From these two claims it immediately follows that $v$ is contained in at most $g_d(n)$ different $d$-cliques.
    We also need, in order to preserve our inductive hypothesis, to establish the following claim, whose proof is also in~Appendix~\ref{sec:appendix}.
    \begin{claim}\label{claim:count}
        We have $|\mathcal{C}| \leq h_d(n)$.
    \end{claim}

    Finally, it remains to show that the construction can be deterministically carried out in time $O(n^d / d!)$. Let $T_d(n)$ be the time complexity of the above construction, and we want to show that there is an absolute constant $C$ such that $T_d(n) \le C \cdot n^d / d!$. The proof is again by a strong double induction, first on $d$ and then on $n$. We have
    \begin{align*}
        T_d(n) &\le \sum_{\vec{x} \in \mathcal{S}_d} \left(\prod_{i \in [d-1] \setminus \{f(\vec{x})\}} \binom{|P_i|}{x_i}\right) T_{x_{f(\vec{x})}}(|P_{f(\vec{x})}|) \\
        &\le \sum_{\vec{x} \in \mathcal{S}_d} \left(\prod_{i \in [d-1] \setminus \{f(\vec{x})\}} \frac{|P_i|^{x_i}}{x_i!}\right) T_{x_{f(\vec{x})}}(|P_{f(\vec{x})}|) \\
        &\le C \cdot \sum_{\vec{x} \in \mathcal{S}_d} \prod_{i \in [d-1]} \frac{|P_i|^{x_i}}{x_i!} \tag{by induction} \\
        &= \frac{C}{d!} \sum_{\vec{x} \in \mathcal{S}_d} \left( \prod_{i \in [d-1]} |P_i|^{x_i} \right) \binom{d}{x_1, \dots, x_{d-1}} \\
        &= C \cdot \frac{n^d}{d!}. \qedhere
    \end{align*}
\end{proof}

\section{Density-aware bounds}\label{sec:nechiporuk}

This section studies biclique partitions of graphs in terms of their edge density $\gamma := |E(G)|/ \binom{|V|}{2}$. 
The number of graphs on $n$ vertices with density $\gamma$ is naturally $\displaystyle \binom{\binom{n}{2}}{\gamma \binom{n}{2}}$. We can understand this quantity through the following consequence of Stirling's approximation.
\begin{remark}\label{remark:entropy}
    Let $\gamma_n \in (0,1)$ be a sequence such that $n \gamma_n \to \infty$ and $n (1-\gamma_n) \to \infty$. Then,
    \[
        \binom{n}{n \gamma_n} \sim \frac{2^{h_2(\gamma_n) n}}{\sqrt{2 \pi \gamma_n (1-\gamma_n) n}},
    \]
    where $h_2(x) := -x\lg(x) - (1-x)\lg(1-x)$ is the binary entropy function. 
\end{remark}

 A useful consequence of~\Cref{remark:entropy} is that $\lg\binom{n}{\gamma n} \sim n h_2(\gamma)$.
 Thus, the information theoretic lower bound, as in~\Cref{thm:lower-bound-cover}, yields
\[
\totalWeight(n, \gamma) \geq \left(\frac{1}{2} + o(1)\right) \cdot h_2(\gamma) \frac{n^2}{\lg n}.
\]

Our starting point for proving upper bounds in this setting is a nice result of Nechiporuk~\cite{nechiporuk}.
\nechiporuk*


Now, similarly to the proof of~\Cref{thm:ep} in the previous section, we will extend Nechiporuk's result to general graphs with an optimal constant by using tournaments.

For the proof, we will use the following combinatorial inequality.

\begin{remark}\label{remark:choosing}
    For any pair of sequences of integers $a_1, \ldots, a_k$ and $b_1, \ldots, b_k$ with $a_i \geq b_i$, we have
    \[
    \prod_{i=1}^{k} \binom{a_i}{b_i} \leq \binom{a_1 + \cdots + a_k}{b_1 + \cdots + b_k}.
    \]
\end{remark}

To see why~\Cref{remark:choosing} is true, consider that for choosing $B = b_1 + \cdots + b_k$ out of $A = a_1 + \cdots + a_k$ objects, one can always limit oneself to choose $b_1$ objects among the first $a_1$, $b_2$ among the second $a_2$ objects, and so on. The number of ways of choosing with this additional restriction, on the left side of~\Cref{remark:choosing}, cannot exceed the total number of ways $\binom{A}{B}$, on the right side.


\sparseces*
Before proceeding with the proof, let us roughly sketch its main high-level idea dating back to~\cite{nechiporuk}, and how~\Cref{remark:choosing} comes into play. While in the proof of~\Cref{thm:ep} we just split $V(G)$ into parts of equal size, the proof of~\Cref{thm:gamma-2} requires two levels of splitting: first we split $V(G)$ into parts $P_1, P_2, \dots$ of a fixed equal size, this time depending on the density $\gamma$, and set up a tournament\footnote{Note that it is unnecessary for the tournament to be almost regular for this application, since we only seek to bound the weight of the constructed biclique partition rather than the load on each vertex.} between the parts just as in~\Cref{thm:ep}. But then, given a vertex $v$, we can view its adjacencies with respect to $P_i$ as a binary string, which we further split into more granular \emph{slices}, which do not have a fixed size; the slices are constructed so that they have roughly the same ``entropy'', instead of roughly the same size. The \emph{entropy} of a string of length $\ell$ with Hamming weight $w$ is $\lg \binom{\ell}{w}$. This way, after slicing the binary string $b_v$ corresponding to $v$'s adjacency row, and constructing bicliques that will take place within a slice, the total weight incurred will be at most
\[
    \sum_{v \in V} \sum_{\text{ slice index } i} \lg \binom{\ell_i}{w_i} = \lg \left( \prod_{v \in V} \prod_{\text{ slice index } i} \binom{\ell_i}{w_i} \right) \le ^{\text{Rem.}~\ref{remark:choosing}} (1+o(1))\lg \binom{n^2/2}{|E|} \sim^{\text{Rem.}~\ref{remark:entropy}}  h_2(\gamma) \frac{n^2}{2},
\]
which explains the shape of the obtained bound.
Let us now proceed with the proof.
\begin{proof}[Proof of~\Cref{thm:gamma-2}]
    The lower bound is via information theory as in the proof of \Cref{thm:lower-bound-cover}, and it remains to prove the upper bound. Let $G$ be an $n$-vertex graph with edge density $\gamma$, where $\max\{\gamma^{-1}, (1-\gamma)^{-1}\} = n^{o(1)}$. Let $\partSize := \floor{\frac{\lg^2 n}{h_2(\gamma)}}$ and partition $V(G)$ into parts $P_1, \dots, P_{\ceil{n/\partSize}}$ of size at most $\partSize$ each. Given a vertex $v \in V(G)$, let $g(v)$ be the index of the part $P_i$ containing $v$. Orient $[\ceil{n/\partSize}]$ according to a tournament, and write $R(i,j)$ if there is a directed edge from $i$ to $j$ in this tournament.

    Label the vertices of $G$ so that $V(G)$ is identified with the set \( \{(i,j) : i \in [\ceil{n/\partSize}], j \in [|P_i|]\}.\) We write $(i,[a,b])$ as an abbreviation for $\{(i,j) : j \in [a,b]\}$. Given $v \in V(G)$, let $N_{i,a,b}(v) := N(v) \cap P_i \cap (i,[a,b])$. We define a strictly increasing sequence of indices $r(v,i,0),r(v,i,1),\ldots,r(v,i,\ell) \in [0,|P_i|]$ as follows. Let $r(v,i,0) = 0$. Then, if $r(v,i,j-1)$ has been defined and is not equal to $|P_i|$, let
    \begin{equation} \label{eqn-entropy}
        r(v,i,j) = \max \left\{x \in [r(v,i,j-1)+1,|P_i|] : \binom{x - r(v,i,j-1)}{|N_{i,r(v,i,j-1)+1,x}(v)|} < \frac{n}{\partSize^4} \right\}.
    \end{equation}
    With these indices defined, let
    \[
        \sliceIndices(v,i) := \{ (r(v,i,j-1)+1,r(v,i,j)) :  j \in [\ell]\}.
    \]

    Given $1 \le a \le b \le |P_i|$ and nonempty $S \subseteq (i,[a,b])$, let
    \[
        A_i(a,b,S) = \{v \in V(G) : N_{i,a,b}(v) = S \ \text{and} \ (a,b) \in \sliceIndices(v,i) \ \text{and} \ R(g(v),i)\},
    \]
    and let
    \[
        \mathcal{S}_{i,a,b} = \{\emptyset \neq S \subseteq (i,[a,b]) : A_i(a,b,S) \neq \emptyset\}.
    \]
    Note that $(S, A_i(a,b,S))$ is a biclique for all $i \in [\ceil{n/\partSize}$, $1 \le a \le b \le |P_i|$, and $S \in \mathcal{S}_{i,a,b}$. For ease of notation, let $\mathcal{I} := \{ (i, a, b, S) : i \in [\ceil{n/r}], 1 \leq a \leq b \leq |P_i|, S \in \mathcal{S}_{i, a, b} \}$.
    
    We define the following set of bicliques:
    \[
\mathcal{C} := \underbrace{\{ (S, A_i(a, b, S)) : (i, a, b, S) \in \mathcal{I}\}}_{\textcolor{gray!90}{\text{across parts}}} \cup \underbrace{\{(\{u\}, \{v\}) : \{u, v\} \in E(G), g(u) = g(v)\}}_{\textcolor{gray!90}{\text{within parts}}} 
\]

    First, we claim that $\mathcal{C}$ is a biclique partition. Let $\{u,v\} \in E(G)$. If $g(u) = g(v)$, then $\{u,v\}$ is covered uniquely by the biclique $(\{u\},\{v\}) \in \mathcal{C}$. Otherwise, $g(u) \neq g(v)$, and we assume without loss of generality that $R(g(u),g(v))$. According to our labeling of $V(G)$, we have $v = (g(v),j)$ for some $j \in [|P_{g(v)}|]$. There is a unique $(a,b) \in \sliceIndices(u,g(v))$ with $j \in [a,b]$. Then, $\{u,v\}$ is covered uniquely by the biclique $(N_{g(v),a,b}(u), A_{g(v)}(a,b,N_{g(v),a,b}(u))) \in \mathcal{C}$.

    Now, we bound the total weight of $\mathcal{C}$. For the following computations, note that our assumptions imply that $1/h_2(\gamma) = o(\gamma^{-1})$.
    
    First, there are the bicliques of the form $(\{u\},\{v\})$ for $g(u) = g(v)$, whose total weight is at most $O(\ceil{n/r} \cdot r^2) = O(n\lg^2(n)/h_2(\gamma)) = o(h_2(\gamma) \frac{n^2}{\lg n})$. Second, we have
    \(
    \sum_{(i, a, b, S) \in \mathcal{I}} |S| \leq \sum_{(i, a, b, S) \in \mathcal{I}} r.
    \)
    In this summation, we have $\ceil{n/\partSize}$ choices for $i$, at most $r^2$ choices for $a$ and $b$, at most $\partSize$ choices for $|S|$, and at most $n/\partSize^4$ elements of $\mathcal{S}_{i,a,b}$ of the chosen size $|S|$. Hence,
    \[
    \sum_{(i, a, b, S) \in \mathcal{I}} r \leq \frac{n^2}{r} = o\left(h_2(\gamma) \frac{n^2}{\lg n}\right).
    \]
    It remains to show that
    \[
        \sum_{(i, a, b, S) \in \mathcal{I}} |A_i(a,b,S)| \le \left(\frac{1}{2}+o(1)\right) \cdot h_2(\gamma) \frac{n^2}{\lg n}.
    \]

    We have
    \begin{align*}
        \sum_{(i, a, b, S) \in \mathcal{I}} |A_i(a,b,S)| &= \sum_{(i, a, b, S) \in \mathcal{I}} \sum_{v \in V(G)} \mathbb{1}_{[v \in |A_i(a,b,S)|]} \\
        &= \sum_{v \in V(G)} \sum_{(i, a, b, S) \in \mathcal{I}} \mathbb{1}_{[v \in |A_i(a,b,S)|]} \\
        &= \sum_{v \in V(G)} |\{(a,b) \in \sliceIndices(v,i) : R(g(v),i)\}|.
    \end{align*}
    Given $v \in V(G)$, $i \in [\ceil{n/\partSize}]$, and $(a,b) \in \sliceIndices(v,i)$ with $b < |P_i|$, we first observe that $0 < |N_{i,a,b}(v)| < b-(a-1)$; otherwise, we would have
    \[
        \frac{n}{\partSize^4} \le \binom{(b+1)-(a-1)}{|N_{i,a,b+1}(v)|} = \binom{(b+1)-(a-1)}{1} \le \partSize,
    \]
    a contradiction. Then, for any $(a,b) \in \sliceIndices(v,i)$ with $b < |P_i|$, we have
    \begin{align*}
        \frac{n}{\partSize^4} &\le \max\left\{\binom{(b+1)-(a-1)}{|N_{i,a,b}(v)|}, \binom{(b+1)-(a-1)}{|N_{i,a,b}(v)|+1}\right\} \\
        &\le \binom{b-(a-1)}{|N_{i,a,b}(v)|} \cdot \frac{b-(a-1)}{\min\{|N_{i,a,b}(v)|,b-(a-1)-|N_{i,a,b}(v)|\}} \le \binom{b-(a-1)}{|N_{i,a,b}(v)|} \cdot \partSize,
    \end{align*}
    from where
    \[
        1 \le \frac{\lg \binom{b-(a-1)}{|N_{i,a,b}(v)|}}{\lg(n) - \lg(\partSize^5)}.
    \]
    Thus,
    \begin{align*}
        \sum_{v \in V(G)} |\{(a,b) \in \sliceIndices(v,i) : b < |P_i|, R(g(v),i)\}| &\le \sum_{\substack{v \in V(G) \\ i \in [\ceil{n/\partSize}]\\ R(g(v),i)}} \sum_{\substack{(a,b) \in \sliceIndices(v,i) \\ b < |P_i|}} \frac{\lg \binom{b-(a-1)}{|N_{i,a,b}(v)|}}{\lg(n) - \lg(\partSize^5)} \\
        &= \frac{\lg\left( \prod_{\substack{v \in V(G) \\ i \in [\ceil{n/\partSize}]\\ R(g(v),i)}} \prod_{\substack{(a,b) \in \sliceIndices(v,i) \\ b < |P_i|}}\binom{b-(a-1)}{|N_{i,a,b}(v)|} \right)}{\lg(n) - \lg(\partSize^5)} \\
        &\le \frac{\lg \binom{\binom{n}{2}}{(\gamma \pm o(1)) \binom{n}{2}}}{\lg(n) - \lg(\partSize^5)} \tag{\Cref{remark:choosing}}\\
        &\le \frac{h_2(\gamma \pm o(1)) \cdot \binom{n}{2}}{\lg(n) - \lg(\partSize^5)} = \left(\frac{1}{2} + o(1)\right) \cdot h_2(\gamma) \frac{n^2}{\lg n}.
    \end{align*}
    Therefore,
    \begin{align*}
        \sum_{v \in V(G)} |\{(a,b) \in \sliceIndices(v,i) : R(g(v),i)\}| &\le \left(\frac{1}{2} + o(1)\right) \cdot h_2(\gamma) \frac{n^2}{\lg n} + \sum_{v \in V(G)} \ceil{n/\partSize} \\
        &= \left(\frac{1}{2} + o(1)\right) \cdot h_2(\gamma) \frac{n^2}{\lg n},
    \end{align*}
    as desired.

    Finally, we show that the above construction can be carried out deterministically in time $O(m)$, assuming $G$ is represented as an adjacency list. First, for each $y \in [r]$, we compute the maximum $x \in \mathbb{N}$ such that
    \(
        \binom{x}{y} < \frac{n}{r^4}
    \)
    and store the result in a lookup table.
    After partitioning $V(G)$ into parts $P_1, \dots, P_{\ceil{n/\partSize}}$, we compute the list $N(v) \cap P_i$ for each $v \in V(G)$ and $i \in \ceil{n/\partSize}$; this is so we can directly access $N(v) \cap P_i$ later without having to iterate through all of $N(v)$ each time. Now, for each $i \in [\ceil{n/r}]$, we maintain an array of linked lists $A_i$ indexed by $a$, $b$, and $S$ satisfying $1 \le a \le b \le |P_i|$ and $\emptyset \neq S \subseteq (i,[a,b])$ such that
    \(
        \binom{b-(a-1)}{|S|} < \frac{n}{r^4}.
    \) Observe that $|A_i| < |P_i|^2 \cdot r \cdot \frac{n}{r^4} \leq \frac{n}{r}$, where the $r$ term comes from the choices for $|S|$. Thus, adding over $i$, the total time allocating these arrays is $O(\frac{n^2}{r^2}) = O(\frac{n^2 h_2^2(\gamma)}{\lg^4 n})$, which is $O(m)$ since for $\gamma$ bounded away from $0$ we have $m = \Omega(n^2)$, and for $\gamma \to 0$ we have $h_2(\gamma) \sim \gamma \lg (1/\gamma)$, from where
    \[
    O\left(\frac{n^2 h_2^2(\gamma)}{\lg^4 n}\right) = O\left(\frac{n^2 \cdot \gamma^2 \lg^2(1/\gamma)}{\lg^4 n}\right) =  o\left(\frac{n^2\gamma^2}{\lg ^2 n}\right) = o(m).
    \]
    
    Then, for each $v \in V(G)$ such that $R(g(v),i)$, we compute $\sliceIndices(v,i)$ according to \eqref{eqn-entropy} by iterating through $N(v) \cap P_i$ and using our lookup table constructed above; this takes $O(N(v) \cap P_i)$ time. For each $(a,b) \in \sliceIndices(v,i)$, add $v$ to $A_i(a,b,N(v) \cap (i,[a,b]))$. For each $a$, $b$, and $S$ satisfying the above conditions, if $A_i(a,b,S) \neq \emptyset$, then output $(S,A_i(a,b,S))$; also, for each $\{u,v\} \in E(G[P_i])$, output $(\{u\}, \{v\})$. The complexity of the algorithm is thus
    \[
        \sum_{\substack{v \in V(G) \\ i \in [\ceil{n/\partSize}]}} O(N(v) \cap P_i) = O(m). \qedhere
    \]
\end{proof}

\subsection{Density-aware Erd\H{o}s--Pyber} \label{sec-density-ep}

Just as the Chung--Erd\H{o}s--Spencer theorem can be strengthened to the Erd\H{o}s--Pyber theorem, it is natural to ask if \Cref{thm:gamma-2} admits a similar strengthening. That is, given a graph $G$ of density $\gamma$, do we have $\lbp(G) \le (h_2(\gamma)/2 + o(1))\frac{n}{\lg n}$? The following example shows that this is not possible in general.

\begin{example}
    By \Cref{thm:lower-bound-cover}, there is a graph $H$ on $\floor{n/8}$ vertices such that
    \[
        \lbp(H) \ge \frac{\cover(H)}{\floor{n/8}} \ge \left(\frac{1}{16} - o(1)\right) \frac{n}{\lg n}.
    \]
    Let $G = H \sqcup \overline{K_{\ceil{7n/8}}}$, where $\overline{K_{\ceil{7n/8}}}$ is the empty graph on $\ceil{7n/8}$ vertices. Then, $G$ is an $n$-vertex graph with edge density $\gamma \le 1/64$ and $\lbp(G) = \lbp(H)$. Numerically, we have $h_2(1/64) < 1/8$, so
    \[
        \lbp(G) > \left(\frac{h_2(\gamma)}{2} + o(1)\right)\frac{n}{\lg n}.
    \]
\end{example}

On the other hand, we can prove a density-aware version of the Erd\H{o}s--Pyber theorem given a stronger assumption. Namely, we need bounds on the degree of each vertex rather than the total number of edges. Given $\vdensity \in [0,0.5]$, let $\lbp(n,\vdensity) := \max\{\lbp(G) : |V(G)| = n \ \text{and} \ \min\{d(v)/(n-1), 1 - d(v)/(n-1)\} \le \vdensity \ \text{for all} \ v \in V(G)\}$. Note that $\lbp(n,0.5) = \lbp(n)$.

\begin{theorem} \label{thm-density-ep}
    If $\vdensity \in [0,0.5]$ and $\vdensity^{-1} = n^{o(1)}$, then
    \[
        \lbp(n, \vdensity) \le \left(\frac{1}{2} + o(1)\right) \cdot h_2(\vdensity) \frac{n}{\lg n}.
    \]
    Furthermore, given a graph satisfying the degree constraints represented as an adjacency list, a biclique partition realizing the upper bound can be constructed in time $O(m)$ by a randomized algorithm with high probability.
\end{theorem}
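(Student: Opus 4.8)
The plan is to merge the Nechiporuk-style two-level slicing from the proof of \Cref{thm:gamma-2} with the almost-regular tournament trick from the proof of \Cref{thm:ep}, run on a \emph{random} vertex partition. Set $\partSize := \lfloor \lg^2 n / h_2(\vdensity)\rfloor$, partition $V(G)$ uniformly at random into $\lceil n/\partSize\rceil$ parts $P_1,\dots,P_{\lceil n/\partSize\rceil}$ of sizes differing by at most one, and fix an almost-regular tournament $R$ on $[\lceil n/\partSize\rceil]$. Then run the construction in the proof of \Cref{thm:gamma-2} verbatim, with $\gamma$ replaced by $\vdensity$: singleton bicliques $(\{u\},\{v\})$ for edges inside a part, and, across parts, for each $v$ with $R(g(v),i)$, the bicliques $(S, A_i(a,b,S))$ obtained by slicing $N(v)\cap P_i$ into the entropy-balanced slices $\sliceIndices(v,i)$. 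The same argument as there shows $\mathcal{C}$ is a biclique partition, and the same implementation runs in time $O(m)$ (plus $O(n)$ for the random shuffle). The footnote in the proof of \Cref{thm:gamma-2} notes that almost-regularity is not needed \emph{there}; for us it is exactly what controls the per-vertex load.

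What changes is that we bound the load of each vertex rather than the total weight. Fix $v$ and let $i_0 = g(v)$. As in the analysis of \Cref{thm:gamma-2}, the singleton bicliques containing $v$ number at most $|P_{i_0}|-1 = O(\partSize)$, and the bicliques $(S, A_i(a,b,S))$ with $v \in S$ (necessarily $i = i_0$) number at most $O(n/\partSize)$; since $\vdensity^{-1} = n^{o(1)}$ forces $h_2(\vdensity) \ge \vdensity = n^{-o(1)}$, both of these are $o\bigl(h_2(\vdensity)\,n/\lg n\bigr)$. The dominant term is the number of bicliques $(S, A_i(a,b,S))$ with $v \in A_i(a,b,S)$, which is precisely $\sum_{i:\,R(i_0,i)} |\sliceIndices(v,i)|$. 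Each $|\sliceIndices(v,i)|$ equals the number of full slices (those with $b<|P_i|$) plus one; the ``plus one'' terms sum to the out-degree of $i_0$, at most $\lceil(\lceil n/\partSize\rceil - 1)/2\rceil = o(h_2(\vdensity)\,n/\lg n)$ by almost-regularity. As computed in the proof of \Cref{thm:gamma-2}, each full slice carries entropy $\lg\binom{b-(a-1)}{|N_{i,a,b}(v)|} \ge \lg(n/\partSize^5) = (1-o(1))\lg n$, so the number of full slices in $\sliceIndices(v,i)$ is at most $\tfrac{1}{(1-o(1))\lg n}\sum_{(a,b)}\lg\binom{b-(a-1)}{|N_{i,a,b}(v)|}$. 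Summing over $i$ with $R(i_0,i)$ and applying \Cref{remark:choosing} once inside each part (collapsing its slices, after padding with the missing last slice and using $\binom{\cdot}{\cdot}\ge 1$, to get $\binom{|P_i|}{|N(v)\cap P_i|}$) and once across the out-parts, the full-slice count is at most $\tfrac{1}{(1-o(1))\lg n}\lg\binom{m_v}{d_v}$, where $m_v := \sum_{i:\,R(i_0,i)}|P_i|$ and $d_v := \sum_{i:\,R(i_0,i)}|N(v)\cap P_i|$.

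It remains to bound $\lg\binom{m_v}{d_v}$, and this is where randomness is essential and where the main difficulty lies. Almost-regularity gives $m_v = (\tfrac12 \pm o(1))n$, so the target is $\lg\binom{m_v}{d_v}\le (\tfrac12+o(1))h_2(\vdensity)\,n$; but for a \emph{fixed} partition the neighbourhood of $v$ could be concentrated on the out-parts, making $d_v \approx d(v)$ and losing the factor $2$. For the random partition, conditioning on $i_0$, the set $\bigcup_{i:R(i_0,i)}P_i$ is a uniformly random $m_v$-subset of $V\setminus P_{i_0}$, so $d_v$, and likewise $m_v-d_v$, is hypergeometric with mean $(\tfrac12+o(1))$ times $d(v)$, respectively $n-1-d(v)$. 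I would split on $\beta_v := \min\{d(v),\,n-1-d(v)\}$: if $\beta_v \le \lg^2 n$ then $\min\{d_v,\,m_v-d_v\}\le \lg^2 n$ and $\lg\binom{m_v}{d_v}\le \lg^3 n = o(h_2(\vdensity)\,n)$ directly, using $h_2(\vdensity)\,n \ge n^{1-o(1)}$; otherwise a Chernoff bound for the hypergeometric distribution with failure probability $n^{-3}$, union-bounded over all $v$, gives $\min\{d_v,\,m_v-d_v\}\le (\tfrac12+o(1))\beta_v \le (\tfrac12+o(1))\vdensity\,n$ with high probability, whence $\min\{d_v,\,m_v-d_v\}/m_v \le \vdensity(1+o(1))$, and $\binom{N}{K}\le 2^{N h_2(K/N)}$ together with the concavity estimate $h_2(\vdensity(1+o(1)))\le (1+o(1))h_2(\vdensity)$ (valid for all $\vdensity\le 1/2$; the case $\vdensity(1+o(1))>1/2$ forces $\vdensity\to 1/2$, hence $h_2(\vdensity)\to 1$) yields $\lg\binom{m_v}{d_v}\le(\tfrac12+o(1))h_2(\vdensity)\,n$. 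Adding all contributions, with high probability every vertex has load at most $(\tfrac12+o(1))h_2(\vdensity)\,n/\lg n$, which proves both that the claimed partition exists and that the randomized $O(m)$ algorithm succeeds. The hard part is exactly this last step: deterministic partitions provably fail, and keeping the hypergeometric fluctuation down to a $(1+o(1))$ relative error uniformly over all vertices --- including sparse ones --- is what forces both the randomization and the case split on $\beta_v$.
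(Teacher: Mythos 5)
Your proof is correct, but it takes a genuinely different route from the paper's, and it is worth comparing the two. The paper keeps the vertex partition into blocks $P_1,\dots,P_{\lceil n/\partSize\rceil}$ \emph{deterministic} (the canonical one by labeling) and randomizes the \emph{tournament}: it orients each pair $\{i,j\}$ by an independent fair coin (\Cref{lem-random-tournament}). Conditioning on $g(v)=i_0$, the quantity $d_R(v)=\sum_{i\neq i_0}\varepsilon_i\,|N(v)\cap P_i|$ is a sum of bounded independent terms and Hoeffding gives, with deviation $\delta\vdensity (n-1)$ and sum of squared ranges at most $\lceil n/\partSize\rceil\,\partSize^2\approx n\partSize=n^{1+o(1)}$, a failure probability of $\exp(-\delta^2 n^{1-o(1)})$ \emph{uniformly over all vertices}, no case split required. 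You instead keep the tournament deterministic (almost-regular) and randomize the \emph{partition}, which makes $d_v$ hypergeometric. Your argument is sound---the one correction is that, conditioned on $g(v)=i_0$, the out-union $\bigcup_{i:R(i_0,i)}P_i$ is a uniformly random $m_v$-subset of $V\setminus\{v\}$ rather than of $V\setminus P_{i_0}$ (since $P_{i_0}$ itself is random), but the hypergeometric conclusion with population $n-1$ and $d(v)$ successes is exactly as you use it.

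Two remarks on simplification. First, the case split on $\beta_v$ is unnecessary: Hoeffding's theorem on sampling without replacement says the hypergeometric variable $d_v$ obeys the same additive tail bound as the corresponding binomial, i.e.\ $\Pr[\,|d_v-\mathbb{E}d_v|\ge t\,]\le 2\exp(-2t^2/m_v)$, so taking $t=\delta\vdensity(n-1)$ directly---exactly the deviation the paper uses in \Cref{lem-random-tournament}---gives failure probability $\exp(-\Omega(\delta^2\vdensity^2 n))=\exp(-\delta^2 n^{1-o(1)})$ uniformly, and sparse vertices are handled for free since the deviation is anchored at $\vdensity n$, not at $d(v)$. Second, the remark that ``deterministic partitions provably fail'' is too strong as stated: the paper's partition is deterministic, and what can be shown is only that a fixed partition paired with a fixed tournament can fail (an adversary can put all of $v$'s neighbors into out-parts of $g(v)$). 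Neither your argument nor the paper's rules out a cleverer deterministic construction. With these adjustments, your proof is a valid alternative, differing from the paper essentially only in which object carries the randomness; the paper's choice (random tournament) is marginally cleaner because the independence of the coin flips makes the concentration one line.
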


Before proving the theorem, we need a lemma. For the following, let $d^+(v)$ be the outdegree of a vertex $v$ in a directed graph.
\begin{lemma} \label{lem-random-tournament}
    Let $G$ be an $n$-vertex graph such that $\min\{d(v)/(n-1), 1 - d(v)/(n-1)\} \le \vdensity \le 1/2$ for all $v \in V(G)$, where $\vdensity^{-1} = n^{o(1)}$. Let $\partSize := \floor{\frac{\lg^2 n}{h_2(\vdensity)}}$ and suppose that $V(G)$ is partitioned into parts $P_1, \dots, P_{\ceil{n/\partSize}}$ of size at most $\partSize$ each. Given a vertex $v \in V(G)$, let $g(v)$ be the index of the part $P_i$ containing $v$. Given a tournament $R$ on $[\ceil{n/\partSize}]$, write $d_R(v) := |\{w \in N_G(v) : R(g(v),g(w))\}|$. Then, there is a tournament $R$ on $[\ceil{n/\partSize}]$ such that (i) for every $i \in [\ceil{n/\partSize}]$, we have $d^+(i) \le (\frac{1}{2}+o(1)) \ceil{n/\partSize}$ and (ii) for every $v \in V(G)$, we have $\min\{d_R(v)/(n-1), 1-d_R(v)/(n-1)\} \le (\frac{1}{2} + o(1))\vdensity$.
\end{lemma}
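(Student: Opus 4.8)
The plan is to prove the lemma by the probabilistic method: show that a uniformly random tournament $R$ on $[M]$, where $M := \ceil{n/\partSize}$ and each of the $\binom{M}{2}$ pairs is oriented independently and uniformly at random, satisfies both (i) and (ii) with probability $1-o(1)$, so that such a tournament exists. First I would record the parameter bounds hidden in the hypotheses. Since $h_2$ is concave with $h_2(0)=0$ and $h_2(1/2)=1$, we have $h_2(\vdensity)\ge 2\vdensity$ on $[0,1/2]$; combined with $\vdensity^{-1}=n^{o(1)}$ this gives $\partSize=\floor{\lg^2 n/h_2(\vdensity)}=n^{o(1)}$ and $M=n^{1-o(1)}$, and also $\vdensity=n^{-o(1)}$. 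Fix once and for all a slowly vanishing sequence $\epsilon_n\to 0$, say $\epsilon_n:=1/\lg\lg n$, chosen so that $\epsilon_n^2\cdot n^{1-o(1)}\to\infty$.

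Property (i) is a one-line Chernoff estimate. For a fixed $i$, the outdegree $d^+(i)=\sum_{j\neq i}\mathbb{1}[R(i,j)]$ is a sum of $M-1$ independent $\mathrm{Bernoulli}(1/2)$ variables with mean $(M-1)/2$, so $\Pr[d^+(i)>(\tfrac12+\epsilon_n)M]\le\exp(-\Omega(\epsilon_n^2 M))=\exp(-n^{1-o(1)})$. A union bound over the $M\le n$ choices of $i$ shows that with probability $1-o(1)$ every $i$ has $d^+(i)\le(\tfrac12+o(1))\ceil{n/\partSize}$.

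For property (ii), fix a vertex $v$ and set $n_j(v):=|N_G(v)\cap P_j|$ for $j\neq g(v)$, so $0\le n_j(v)\le\partSize$ and $\sum_{j\neq g(v)}n_j(v)=d(v)-n_{g(v)}(v)=d(v)\pm\partSize$. Then $d_R(v)=\sum_{j\neq g(v)}n_j(v)\,\mathbb{1}[R(g(v),j)]$ is a sum of independent random variables, each valued in $[0,\partSize]$, with mean $\tfrac12 d(v)\pm\partSize$. If $v$ is sparse ($d(v)\le\vdensity(n-1)$) we bound $d_R(v)$ directly; if $v$ is dense ($d(v)\ge(1-\vdensity)(n-1)$) we run the identical argument on the complement graph, i.e.\ on the forward non-neighbor counts $|P_j|-n_j(v)$, whose total is at most $\vdensity(n-1)+O(\partSize)$ and which controls $1-d_R(v)/(n-1)$. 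In either case the relevant quantity $X_v$ is a sum of $[0,\partSize]$-bounded independent variables with expectation at most $\tfrac12\vdensity(n-1)+O(\partSize)$, so the variance proxy in Hoeffding's inequality is $\sum_j(\text{term}_j)^2\le\partSize\sum_j\text{term}_j\le\partSize\,\vdensity\,n\,(1+o(1))$, whence
\[
\Pr\bigl[\,X_v\ge\tfrac12\vdensity(n-1)+\epsilon_n\vdensity n\,\bigr]\le\exp\!\left(-\Omega\!\left(\frac{\epsilon_n^2\,\vdensity^2 n^2}{\partSize\,\vdensity\,n}\right)\right)=\exp\!\left(-\Omega\!\left(\epsilon_n^2\,\frac{\vdensity\,n}{\partSize}\right)\right).
\]
Since $\vdensity n/\partSize\ge\vdensity n\,h_2(\vdensity)/\lg^2 n\ge 2\vdensity^2 n/\lg^2 n=n^{1-o(1)}$, this failure probability is $\exp(-n^{1-o(1)})$, and a union bound over all $n$ vertices gives $X_v\le(\tfrac12+o(1))\vdensity n$ for every $v$ with probability $1-o(1)$, which is the bound in (ii). A final union bound over the $O(n)$ bad events for (i) and (ii) shows a random $R$ works with probability $1-o(1)$, so some explicit $R$ does.

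The main obstacle is balancing the two union bounds against the strength of concentration: the part sizes $\partSize$ play the role of the "bucket widths" in Hoeffding's inequality, so one needs $\partSize=n^{o(1)}$ — which is exactly why $\vdensity^{-1}=n^{o(1)}$ is assumed — to make the deviation $o(\vdensity n)$ while keeping the per-event failure probability $\exp(-n^{1-o(1)})$, small enough to survive a union bound over both the $M$ parts and the $n$ vertices. A secondary subtlety is recognizing that for dense vertices it is the forward non-neighbor count, not $d_R(v)$ itself, that must be controlled, so that the single statement $\min\{d_R(v)/(n-1),\,1-d_R(v)/(n-1)\}\le(\tfrac12+o(1))\vdensity$ follows from running the same concentration argument symmetrically on $G$ and on its complement.
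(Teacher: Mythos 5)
Your proof follows the paper's argument step by step: a uniformly random tournament, Hoeffding's inequality for the outdegree bound (i), Hoeffding's inequality for $d_R(v)$ in (ii) viewed as a sum of independent part-weighted Bernoulli contributions each bounded by $\partSize$, and a union bound over the $O(n)$ events. Your explicit $\epsilon_n = 1/\lg\lg n$ is a cosmetic restatement of the paper's ``for all fixed $\delta > 0$'' framing, and your variance proxy $\partSize\vdensity n$ is slightly tighter than the paper's $(\ceil{n/\partSize}-1)\partSize^2$, but both suffice for the union bound.

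One shared subtlety, which the paper dispatches with a single word (``symmetric'') and you gloss over with ``which controls $1-d_R(v)/(n-1)$'': bounding the forward non-neighbor count $\bar{d}_R(v) := \sum_{j:R(g(v),j)}(|P_j|-n_j(v))$ by $(\tfrac{1}{2}+o(1))\vdensity(n-1)$ does \emph{not} yield $1-d_R(v)/(n-1) \le (\tfrac{1}{2}+o(1))\vdensity$. Indeed $\bar{d}_R(v) = M - d_R(v)$ with $M := \sum_{j : R(g(v),j)}|P_j| \approx n/2$, so $n-1-d_R(v) = \bar{d}_R(v) + (n-1-M) \approx n/2$ regardless; for a dense vertex $\min\{d_R(v)/(n-1),\,1-d_R(v)/(n-1)\}$ is typically $\approx \tfrac{1-\vdensity}{2}$, which exceeds $(\tfrac{1}{2}+o(1))\vdensity$ whenever $\vdensity$ is bounded away from $1/2$. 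What the concentration argument actually controls for a dense $v$ is $\bar{d}_R(v)$ itself, and that is precisely what the downstream application in \Cref{thm-density-ep} uses, namely to bound $\binom{M}{d_R(v)} = \binom{M}{M-d_R(v)}$. So your proposal faithfully reproduces the paper's proof, including an apparent imprecision in the statement of item (ii) that does not affect its use.
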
 
\begin{proof}
    We claim that a tournament $R$ in which the orientation of each edge is chosen uniformly and independently at random works with high probability. Note that our assumptions imply that $1/h_2(\vdensity) = o(\vdensity^{-1})$; in particular, $\partSize = n^{o(1)}$.

    First, we prove claim (i). Fix an element $i \in [\ceil{n/\partSize}]$. Without loss of generality, take $i=1$. Then, we have
    \(
        d^+(i) = \sum_{j=2}^{\ceil{n/\partSize}} \varepsilon_j,
    \)
    where each $\varepsilon_j \in \{0,1\}$ is a Bernoulli random variable. We have
    \(
        \mathbb{E}[d^+(i)] \le \frac{1}{2} \ceil{n/\partSize}.
    \)
    Let $\delta >0$. Then, by Hoeffding's inequality,
    \begin{align*}
        \mathbb{P}\left[d^+(i) \ge \left(\frac{1}{2} + \delta\right) \ceil{n/\partSize}\right] &\le \mathbb{P}\left[d^+(i)\ge \mathbb{E}[d^+(i)] + \delta \ceil{n/\partSize}\right] \\
        &\le \exp\left(\frac{-2(\delta \ceil{n/\partSize})^2}{\ceil{n/\partSize}-1}\right) \\
        &\le \exp(- \delta^2 \omega(n^{1-\varepsilon}))
    \end{align*}
    for all fixed $\varepsilon > 0$. Hence, by the union bound,
    \[
        \mathbb{P}\left[d^+(i) \ge \left(\frac{1}{2} + \delta\right) \ceil{n/\partSize} \ \text{for some} \ i \in \ceil{n/\partSize}\right] \le \exp(- \delta^2 \omega(n^{1-\varepsilon}))
    \]
    for all fixed $\varepsilon > 0$. Therefore, $d^+(i) \le (\frac{1}{2}+o(1)) \ceil{n/\partSize}$ for all $i \in [\ceil{n/\partSize}]$ with high probability.
    
    Next, we prove claim (ii). Fix a vertex $v \in V(G)$. Without loss of generality, $g(v) = 1$ and $d(v)/(n-1) \le \vdensity$ (the case where $1-d(v)/(n-1) \le \vdensity$ is symmetric). Then, we have
    \(
        d_R(v) = \sum_{i=2}^{\ceil{n/\partSize}} \varepsilon_i \cdot |N(v) \cap P_i|,
    \)
    where each $\varepsilon_i \in \{0,1\}$ is a Bernoulli random variable. Thus,
    \[
        \mathbb{E}[d_R(v)] \le \frac{1}{2} d(v) \le \frac{1}{2} \vdensity (n-1).
    \]
    Let $\delta > 0$. Then, by Hoeffding's inequality,
    \begin{align*}
        \mathbb{P}\left[d_R(v)/(n-1) \ge \left(\frac{1}{2} + \delta\right) \vdensity\right] &\le \mathbb{P}\left[d_R(v) \ge \mathbb{E}[d_R(v)] + \delta \vdensity (n-1)\right] \\
        &< \exp\left(\frac{-2 (\delta \vdensity (n-1))^2}{(\ceil{n/\partSize}-1) \partSize^2}\right) \\
        &\le \exp(-\delta^2 \omega(n^{1-\varepsilon}))
    \end{align*}
    for all fixed $\varepsilon > 0$. Hence, by the union bound,
    \[
        \mathbb{P}\left[\min\{d_R(v)/(n-1), 1-d_R(v)/(n-1)\} \ge \left(\frac{1}{2} + \delta\right) \vdensity \ \text{for some} \ v \in V(G)\right] \le \exp(-\delta^2 \omega(n^{1-\varepsilon}))
    \]
    for all fixed $\varepsilon > 0$. Therefore, $\min\{d_R(v)/(n-1), 1-d_R(v)/(n-1)\} \le (\frac{1}{2} + o(1))\vdensity$ for all $v \in V(G)$ with high probability.
\end{proof}

\begin{proof}[Proof of \Cref{thm-density-ep}]
    Let $G$ be an $n$-vertex graph such that $\min\{d(v)/(n-1), 1 - d(v)/(n-1)\} \le \vdensity \le 1/2$ for all $v \in V(G)$, where $\vdensity^{-1} = n^{o(1)}$. The construction of the biclique partition $\mathcal{C}$ is nearly identical to the one given in the proof of \Cref{thm:gamma-2}, the only difference being that rather than using an arbitrary tournament, we apply \Cref{lem-random-tournament} to $G$ to get a tournament $R$ on $[\ceil{n/\partSize}]$ such that $d^+(i) \le (\frac{1}{2}+o(1)) \ceil{n/\partSize}$ for all $i \in [\ceil{n/\partSize}]$ and $\min\{d_R(v)/(n-1), 1-d_R(v)/(n-1)\} \le (\frac{1}{2} + o(1))\vdensity$ for all $v \in V(G)$. In particular, the construction still takes $O(m)$ time, although it is now randomized.

    Fix a vertex $v \in V(G)$, and we count how many bicliques from $\mathcal{C}$ contain $v$. For the following computations, note that our assumptions imply that $1/h_2(\vdensity) = o(\vdensity^{-1})$.
    
    First, there are the bicliques of the form $(\{u\},\{v\})$ for $g(u) = g(v)$, of which there are at most $O(|P_{g(v)}|) = O(\lg^2(n)/h_2(\vdensity)) = o(h_2(\vdensity) \frac{n}{\lg n})$. Second, there are the bicliques of the form $(S, A_i(a,b,S))$ with $v \in S$. We upper bound this quantity by counting how many bicliques of the form $(S, A_i(a,b,S))$ with $\emptyset \neq S \subseteq P_{g(v)}$ there are. There are at most $\partSize^2$ choices for $a$ and $b$, at most $\partSize$ choices for $|S|$, and at most $n/\partSize^4$ elements of $\mathcal{S}_{i,a,b}$ of the chosen size $|S|$. Hence, the number of bicliques of the form $(S, A_i(a,b,S))$ with $v \in S$ is at most $O(n/\partSize) = o(h_2(\vdensity) \frac{n}{\lg n})$. Third, there are the bicliques of the form $(S, A_i(a,b,S))$ with $v \in A_i(a,b,S)$, which we now proceed to analyze.

    For any $(a,b) \in \sliceIndices(v,i)$ with $b < |P_i|$, we first observe that $0 < |N_{i,a,b}(v)| < b-(a-1)$; otherwise, we would have
    \[
        \frac{n}{\partSize^4} \le \binom{(b+1)-(a-1)}{|N_{i,a,b+1}(v)|} = \binom{(b+1)-(a-1)}{1} \le \partSize,
    \]
    a contradiction. Then, for any $(a,b) \in \sliceIndices(v,i)$ with $b < |P_i|$, we have
    \begin{align*}
        \frac{n}{\partSize^4} &\le \max\left\{\binom{(b+1)-(a-1)}{|N_{i,a,b}(v)|}, \binom{(b+1)-(a-1)}{|N_{i,a,b}(v)|+1}\right\} \\
        &\le \binom{b-(a-1)}{|N_{i,a,b}(v)|} \cdot \frac{b-(a-1)}{\min\{|N_{i,a,b}(v)|,b-(a-1)-|N_{i,a,b}(v)|\}} \le \binom{b-(a-1)}{|N_{i,a,b}(v)|} \cdot \partSize,
    \end{align*}
    from where
    \[
        1 \le \frac{\lg \binom{b-(a-1)}{|N_{i,a,b}(v)|}}{\lg(n) - \lg(\partSize^5)}.
    \]
    Thus,
    \begin{align*}
        |\{(a,b) \in \sliceIndices(v,i) : b < |P_i|, R(g(v),i)\}| &\le \sum_{\substack{i \in [\ceil{n/\partSize}]\\ R(g(v),i)}} \sum_{\substack{(a,b) \in \sliceIndices(v,i) \\ b < |P_i|}} \frac{\lg \binom{b-(a-1)}{|N_{i,a,b}(v)|}}{\lg(n) - \lg(\partSize^5)} \\
        &= \frac{\lg\left( \prod_{\substack{i \in [\ceil{n/\partSize}]\\ R(g(v),i)}} \prod_{\substack{(a,b) \in \sliceIndices(v,i) \\ b < |P_i|}}\binom{b-(a-1)}{|N_{i,a,b}(v)|} \right)}{\lg(n) - \lg(\partSize^5)} \\
        &\le \frac{\lg \binom{(1/2 + o(1)) n}{d_R(v)}}{\lg(n) - \lg(\partSize^5)} \tag{\Cref{remark:choosing}} \\
        &\le \frac{\left(\frac{1}{2} + o(1)\right) \cdot h_2(\vdensity) \cdot n}{\lg(n) - \lg(\partSize^5)} = \left(\frac{1}{2} + o(1)\right) \cdot h_2(\vdensity) \frac{n}{\lg n}.
    \end{align*}
    Therefore, the number of bicliques of the form $(S, A_i(a,b,S))$ with $v \in A_i(a,b,S)$ is
    \begin{align*}
        |\{(a,b) \in \sliceIndices(v,i) : R(g(v),i)\}| &\le \left(\frac{1}{2} + o(1)\right) \cdot h_2(\vdensity) \frac{n}{\lg n} + \ceil{n/\partSize} \\
        &= \left(\frac{1}{2} + o(1)\right) \cdot h_2(\vdensity) \frac{n}{\lg n},
    \end{align*}
    as desired.
\end{proof}

\section{Biclique representation of graphs}\label{sec:representations}
An interesting interpretation of~\Cref{thm:gamma-2} is that representing graphs by biclique partitions is information theoretically optimal for any density $\gamma$ such that $\max\{\gamma^{-1}, (1-\gamma)^{-1}\} = n^{o(1)}$. In general, when representing objects from a set $\mathcal{U}$, a representation is said to be \emph{succinct} if it uses $\lg(\mathcal{U}) (1+o(1))$ bits, and \emph{compact} if it uses $O(\lg(\mathcal{U}))$ bits~\cite{Navarro2016}. We consider the following two biclique-based representations for an undirected graph $G$:

\begin{enumerate}
    \item \textbf{SB} (Succinct Biclique Representation). Given a biclique partition $\mathcal{B} = \{(L_1, R_1), \ldots, (L_k, R_k)\}$ of $G$, 
    we store a list $L_{\mathcal{B}}$ of bicliques, where each biclique is stored as a structure with two sorted lists of vertices, corresponding to its sides. 
    \item \textbf{CB} (Compact Biclique Representation). On top of the SB representation, we store as well for each vertex $v$, a list of pointers to the bicliques $(L_i, R_i)$ it belongs to, and also one extra bit per biclique representing whether $v$ belongs to $L_i$ or to $R_i$.
\end{enumerate}

\lstinputlisting[caption=Example of a concrete data-layout for biclique representations., label={lst:listing-c},  style=myStyle, language=C]{struct_code.c}

Naturally, for these representations we will compute a small biclique partition leveraging~\Cref{thm:gamma-2}, or sometimes~\Cref{thm:ep}.
The total size of the SB representation, using a biclique partition $\mathcal{B}$ is $w(\mathcal{B}) \cdot \lg n$ (each vertex id uses $\lg n$ bits), and thus by~\Cref{thm:gamma-2},
 it allows to represent any $n$-vertex graph of density $\gamma$ with only $(1/2 + o(1)) \cdot h_2(\gamma)n^2$ bits, which indeed makes it a succinct representation. On the other hand, the CB representation uses three times as many bits: since there are no more than $O(n^2)$ bicliques in $\mathcal{B}$, each pointer can be stored in $2 \lg n$ bits, and thus, if $\ell_\mathcal{B}(v)$ is the number of bicliques a vertex $v$ belongs to, then the additional space used by the CB representation is
 \[
 \sum_{v \in V(G)} \ell_\mathcal{B}(v) \cdot (2\lg n + 1) \sim 2 \lg n \sum_{v \in V(G)} \sum_{(L_i, R_i) \in \mathcal{B}} \mathbb{1}_{[v \in (L_i \cup R_i)]} = 2 \lg n  \sum_{(L_i, R_i) \in \mathcal{B}} |L_i| + |R_i| = 2 \lg n \cdot w(\mathcal{B}).
 \]

 While alternative succinct and compact representations for graphs are known~\cite{Navarro2016, FARZAN201338}, we will show a few examples for which these biclique representations can result in algorithmic improvements. Furthermore, by representing bicliques with a succinct data structure for integer sequences, such as the one given by Golynski, Munro, and Rao~\cite{rankSelectSequences}, it is possible to transform the CB representation into a succinct one at the cost of accessing the $i$-th biclique of a given vertex in time $O(\lg \lg n)$ instead of $O(1)$. The main idea, as in the work of Hern\'andez and Navarro~\cite{Hernndez2013}, is that succinct data structures for an integer sequence $S$ still allow to efficiently identify the position of the $i$-th occurrence of an integer $v$ in $S$, a query known as $\textsf{select}(v, i)$, and thus if $S$ is the concatenation of the biclique parts, then identifying the $i$-th occurrence of $v$ in $S$ corresponds to identifying the $i$-th biclique in which $v$ appears.

\subsection{Independent set queries}

Suppose that, given a graph $G = (V, E)$, we wish to answer a sequence of \emph{independent set queries} of the form ``Is $S \subseteq V$ an independent set?''. This problem was studied first by Williams~\cite{williamsSubquadratic}, who showed that by preprocessing the graph in time $O(n^{2+ \varepsilon})$, for any $\varepsilon > 0$, one can answer each query in time $O( n^2 / (\varepsilon \lg n)^2)$. This was subsequently improved by a randomized preprocessing algorithm of Bansal and Williams~\cite{RegLemmaAlgorithms}, who also discussed interesting algorithmic implications, and more recently derandomized by Vassilevska Williams and Williams~\cite{subcubicEquiv}, resulting in the following theorem.
\begin{theorem}[\cite{subcubicEquiv}, {\cite[Thm.~2.3]{RegLemmaAlgorithms}}]\label{thm:BW}
    There is a deterministic algorithm that preprocesses an $n$-vertex graph in time $O(n^{2+\varepsilon})$ for any $\varepsilon > 0$, so that any batch of $\lg n$ independent set queries can be answered deterministically in time $O(\tfrac{1}{\varepsilon}n^2(\lg \lg n)^2/(\lg n)^{5/4})$.
\end{theorem}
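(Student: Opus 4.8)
The plan is to reduce a batch of $\lg n$ independent-set queries to one Boolean matrix--matrix product and then invoke a combinatorial (regularity-based) fast Boolean matrix multiplication, in the spirit of Bansal--Williams~\cite{RegLemmaAlgorithms} with the derandomization of Vassilevska Williams--Williams~\cite{subcubicEquiv}. Let $A \in \{0,1\}^{n \times n}$ be the adjacency matrix of $G$, and for queries $S_1, \dots, S_{\lg n} \subseteq V$ let $X \in \{0,1\}^{n \times \lg n}$ be the matrix whose $\ell$-th column is the indicator vector of $S_\ell$. Over the Boolean semiring the product $B := AX$ has $B_{v\ell} = [\,N(v) \cap S_\ell \neq \varnothing\,]$, so $S_\ell$ is independent in $G$ if and only if $\bigvee_v (X_{v\ell} \wedge B_{v\ell}) = 0$; this check costs $O(n\lg n)$ over the whole batch (and folds into the product with word operations), so the task reduces to computing the ``thin'' Boolean product $AX$, i.e.\ $\lg n$ columns of an $n \times n$ Boolean product.

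As a warm-up one recovers the weaker bound $O(n^2 / (\varepsilon\lg n)^2)$ per query (Williams~\cite{williamsSubquadratic}) by a two-level Four-Russians table: during the $O(n^{2+\varepsilon})$ preprocessing, partition the columns of $A$ into $n/(\varepsilon\lg n)$ groups of size $\varepsilon\lg n$ and tabulate, for every pair of groups $(G_a,G_b)$ and every pair of subsets $T_a \subseteq G_a$, $T_b \subseteq G_b$, whether $G$ has an edge between $T_a$ and $T_b$; each group-pair table has $n^{O(\varepsilon)}$ entries and is built incrementally, and a query is answered by forming the $O(n/\lg n)$ restrictions $S \cap G_a$ and reading off all $O((n/\lg n)^2)$ pairs. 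To reach the stated bound one replaces this with the combinatorial Boolean matrix multiplication of Bansal--Williams, which during preprocessing computes (deterministically, via~\cite{subcubicEquiv}) a weak/Szemer\'edi-type regularity partition of $V(G)$ in time $O(n^{2+\varepsilon})$ and then exploits it so that the Four-Russians table can be indexed by ``neighborhood types'' rather than raw length-$\Theta(\lg n)$ bitstrings; iterating this over the $O(\lg\lg n)$ scales of a hierarchical decomposition gains an extra $\lg^{1/4} n$ factor at the cost of $(\lg\lg n)^2$, and one tunes $\varepsilon$ to balance the preprocessing against the $1/\varepsilon$ factor in the query bound. Concretely, the Bansal--Williams algorithm multiplies $n\times n$ Boolean matrices in time $n^3 (\lg\lg n)^2/\lg^{9/4} n$, so scaling down to the $\lg n$ columns of $X$ gives exactly $O(\tfrac1\varepsilon\, n^2 (\lg\lg n)^2/\lg^{5/4} n)$ per batch.

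The step I expect to be the main obstacle is reconciling the regularity-based compression with the exactness that independent-set queries demand: unlike approximate matrix multiplication, a query cannot miss even a single edge, so the partition can be used only as a combinatorial device to limit the number of distinct neighborhood types --- never to approximate $A$ --- and the edges in irregular pairs and within parts must be carried along and handled exactly, which is the delicate accounting of the Bansal--Williams proof. A secondary point is the derandomization of the partition itself so that preprocessing (and not only the query phase) is deterministic, which is the content of~\cite{subcubicEquiv}; and batching is essential, since a single query does not amortize the type-table lookups and it is the width $\lg n$ of $X$ that spreads the per-scale overhead.
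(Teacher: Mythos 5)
The paper does not prove \Cref{thm:BW}; it is cited verbatim from Bansal--Williams~\cite{RegLemmaAlgorithms} (Theorem~2.3) together with the derandomization of Vassilevska Williams--Williams~\cite{subcubicEquiv}, and serves only as a point of comparison for \Cref{prop:is-queries}. There is therefore no in-paper proof to compare your proposal against.

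That said, your sketch is a fair high-level account of the cited works. The reduction of a batch of $\lg n$ independent-set queries to a thin $n\times\lg n$ Boolean product $AX$, and the observation that $S_\ell$ is independent iff $\bigvee_v\bigl(X_{v\ell}\wedge(AX)_{v\ell}\bigr)=0$, is exactly the setup of~\cite{RegLemmaAlgorithms}, and it is precisely the batching to $\lg n$ columns (one per machine word) that makes the amortization work. The gain over Williams' $O(n^2/(\varepsilon\lg n)^2)$ bound is, as you say, driven by a Frieze--Kannan weak regularity partition computed during preprocessing (deterministically, by~\cite{subcubicEquiv}), used to compress the Four-Russians tables by indexing them with neighborhood types; your caveat that the partition must be an exact combinatorial device rather than an approximation to $A$, with edges inside parts and in irregular pairs handled separately, is exactly the delicate accounting in their proof. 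One inaccuracy worth flagging: Theorem~2.3 of~\cite{RegLemmaAlgorithms} is phrased directly for the preprocess/query-batch problem, not as a standalone $n^3(\lg\lg n)^2/\lg^{9/4}n$ BMM theorem from which one scales down to $\lg n$ columns; the ``scale the BMM bound'' framing is a useful heuristic but blurs the line between the $O(n^{2+\varepsilon})$ preprocessing of $A$ (which builds the partition and the tables) and the query phase, and the two are charged quite differently. Overall your proposal is a synopsis rather than a proof --- the hierarchical decomposition producing the $(\lg\lg n)^2$ factor, the exact treatment of irregular pairs, and the deterministic partition construction of~\cite{subcubicEquiv} are each substantial arguments you gesture at rather than supply --- but as an outline of where the cited bound comes from it is accurate.
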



It turns out that, by storing a graph in the SB representation, which can be computed in time $O(n^2)$ deterministically by~\Cref{thm:ep}, we can answer independent set queries in time $O(n^2/ \lg n)$, which improves on the naive $\Omega(|S|^2) = \Omega(n^2)$ algorithm. 
Note that, if the number of independent set queries is very small, say $O(\lg \lg n)$, and we want to limit the preprocessing time to e.g., $O(n^2 \lg n)$, then using~\Cref{thm:BW} as a black box yields a runtime of $O(n^2 \lg \lg n / \lg^{1/4} n)$ by taking $\varepsilon = \frac{\lg \lg n}{\lg n}$, whereas our algorithm runs in $O(n^2 \lg \lg n/ \lg n)$, and with strictly less preprocessing.

\begin{proposition}\label{prop:is-queries}
    Given an SB representation of an $n$-vertex graph based on a partition $\mathcal{B}$, an independent set query $S$ can be answered in time $O(w(\mathcal{B}) + |S|)$.
\end{proposition}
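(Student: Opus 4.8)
The key observation is a clean reformulation of independence in terms of the stored partition: since $\mathcal{B} = \{(L_1,R_1),\dots,(L_k,R_k)\}$ partitions $E(G)$, for any pair $u,v \in V(G)$ we have $\{u,v\} \in E(G)$ if and only if there is a (unique) index $i$ with $\{u,v\} \cap L_i$ and $\{u,v\} \cap R_i$ both nonempty, i.e.\ $\{u,v\}$ is one of the edges of the biclique $B_i$. Consequently, a set $S \subseteq V$ is \emph{not} an independent set if and only if there exists some $i \in [k]$ with $L_i \cap S \neq \emptyset$ \emph{and} $R_i \cap S \neq \emptyset$: the forward direction is because some edge inside $S$ lies in some $B_i$, placing one endpoint in each side; the reverse because picking $u \in L_i \cap S$ and $v \in R_i \cap S$ (these are distinct, as $L_i$ and $R_i$ are disjoint) exhibits an edge of $G$ inside $S$. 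So the whole task reduces to testing, over all bicliques in $L_{\mathcal B}$, whether both sides meet $S$.

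The algorithm is then immediate. First I would mark the vertices of $S$: using an auxiliary boolean array $\mathrm{inS}[1..n]$, set $\mathrm{inS}[v] \gets \mathbf{true}$ for each $v \in S$, which costs $O(|S|)$. Then iterate over the list $L_{\mathcal B}$; for each biclique $(L_i,R_i)$, scan its first stored list and test whether any vertex $u$ has $\mathrm{inS}[u]$, then do the same for the second list, and if both scans found a marked vertex, report ``not independent'' and stop. If no biclique triggers, report ``independent''. Finally, undo the marks by setting $\mathrm{inS}[v] \gets \mathbf{false}$ for each $v \in S$, again $O(|S|)$, so the array is clean for the next query. Correctness is exactly the equivalence above; note we do not even need the stored vertex lists to be sorted for this, although the SB representation provides that for free. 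The running time of the scanning phase is $\sum_{i=1}^{k} (|L_i| + |R_i|) = \sum_{i=1}^{k} |V(B_i)| = w(\mathcal{B})$, and together with the two $O(|S|)$ marking passes this gives the claimed $O(w(\mathcal{B}) + |S|)$ bound.

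The only subtlety, rather than a genuine obstacle, is the $O(n)$-time one-off cost of zeroing the array $\mathrm{inS}$; I would handle this either by maintaining as an invariant of the data structure that $\mathrm{inS}$ is all-$\mathbf{false}$ between queries (so only the initial construction pays $O(n)$, which is dominated by the cost of building the SB representation in the first place), or, if one insists on charging nothing to the side storage, by using the standard lazy array-initialization trick so that reads, writes, and initialization of $\mathrm{inS}$ each take $O(1)$. With that bookkeeping in place the proof is complete, and it plugs directly into the subsequent remarks about querying a graph stored in the SB form produced by \Cref{thm:ep} (weight $O(n^2/\lg n)$), giving the $O(n^2/\lg n)$ independent-set query time stated in \Cref{sec:representations}.
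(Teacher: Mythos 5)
Your proof is correct and takes essentially the same approach as the paper: reformulate independence of $S$ as the absence of a biclique $(L_i,R_i)$ with both sides meeting $S$, mark $S$ in a length-$n$ boolean array, and scan each biclique's vertex lists. Your explicit treatment of resetting (or lazily initializing) the marking array is a minor bookkeeping point the paper glosses over, but it does not change the argument.
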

\begin{proof}
Let $\mathcal{B} = \{(L_1, R_1), \ldots, (L_k, R_k)\}$.
    Note that a subset $S \subseteq V$ is independent if and only if there is no biclique $(L_i, R_i) \in \mathcal{B}$ such that $|L_i \cap S| > 0$ and $|R_i \cap S| > 0$. 
    Therefore, we can create a bitvector $B_S[1..n]$ such that $B_S[v] = 1$ if $v \in S$ and $B_S[v] = 0$ otherwise. This takes $O(|S|)$ time, since it suffices to iterate over the elements of $S$ and mark their corresponding positions in $B_S$.
    Then for each $i$ we check whether $|S \cap L_i| > 0$ in time $O(|L_i|)$ simply by iterating over the vertices of $v \in L_i$, and checking whether $B_S[v] = 1$ for at least some $v$. We proceed analogously for $R_i$. The total runtime is thus 
    \[
    O(|S|) + \sum_{i=1}^k O(|L_i|) + O(|R_i|) = O\left(|S| + \sum_{i=1}^k |L_i| + |R_i|\right) = O(w(\mathcal{B}) + |S|). \qedhere
    \]
\end{proof}

\subsection{Cut queries}

We now consider queries in which we are given two disjoint subsets of vertices, $S, T \subseteq V$, and we wish to answer the number $c(S, T)$ of edges with one endpoint in $S$ and the other one in $T$. Applications of these queries to other algorithmic problems, in the particular case where $T = V \setminus S$, have been discussed by Apers et al.~\cite{cutFocs}, and Lee, Santha, and Zhang~\cite{leeSoda}. The case of arbitrary disjoint subsets $S, T$ is used, for instance, by Assadi et al.~\cite{assadi_et_al:LIPIcs.ESA.2021.7}.\footnote{Note that the case $c(S, V \setminus S)$ is not easier since
\(
2 \cdot c(S, T) = c(S, V \setminus S) + c(T, V \setminus T) - c(S \cup T, V \setminus (S \cup T)).
\)}
Note that if $\mathbf{A}_G$ is the adjacency matrix of the graph, and $\mathbf{x}_S$ and $\mathbf{x}_T$ are vectors representing $S$ and $T$ respectively, then $c(S, T) = \mathbf{x}_S^{\top} \mathbf{A}_G \mathbf{x}_T$. Unfortunately, the algorithms for matrix-vector multiplication achieving the runtime of~\Cref{thm:BW} work over the Boolean semi-ring, and thus they cannot be used as a black box for computing these products over $\mathbb{N}$. Our~\Cref{prop:is-queries} can be trivially extended to cut queries as follows.

\begin{proposition}\label{prop:cut-queries}
    Given an SB representation of an $n$-vertex graph based on a partition $\mathcal{B}$, a cut query can be answered in time $O(w(\mathcal{B}) + |S| + |T|)$.
\end{proposition}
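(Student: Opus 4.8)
The plan is to follow the proof of \Cref{prop:is-queries} almost verbatim, exploiting that a biclique partition is edge-disjoint, so that the contributions of distinct bicliques can simply be summed. Write $\mathcal{B} = \{(L_1,R_1),\dots,(L_k,R_k)\}$. Every edge of $G$ appears in exactly one $(L_i,R_i)$, as a pair $\{u,v\}$ with $u \in L_i$ and $v \in R_i$, and since $S \cap T = \varnothing$ such an edge joins $S$ and $T$ precisely when $(u \in S,\ v \in T)$ or $(u \in T,\ v \in S)$, and these two options are mutually exclusive. Hence
\[
    c(S,T) = \sum_{i=1}^{k} \bigl( |L_i \cap S|\cdot|R_i \cap T| + |L_i \cap T|\cdot|R_i \cap S| \bigr),
\]
with each edge counted exactly once.

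For the algorithm, I would first build two bitvectors $B_S[1..n]$ and $B_T[1..n]$ with $B_S[v] = 1$ iff $v \in S$ and $B_T[v] = 1$ iff $v \in T$, which takes $O(|S|+|T|)$ time. Then, for each $i$, sweep once through the sorted vertex list of $L_i$, maintaining counters for $|L_i \cap S|$ and $|L_i \cap T|$ by testing $B_S$ and $B_T$ at each vertex, and do the same for $R_i$; this costs $O(|L_i|+|R_i|)$ per biclique. Accumulating the term $|L_i \cap S|\,|R_i \cap T| + |L_i \cap T|\,|R_i \cap S|$ into a running total yields $c(S,T)$. The overall running time is
\[
    O(|S|+|T|) + \sum_{i=1}^k O(|L_i| + |R_i|) = O\!\left(w(\mathcal{B}) + |S| + |T|\right),
\]
as required.

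There is no real obstacle here; the only point needing a line of justification is that each edge between $S$ and $T$ is counted exactly once in the displayed sum, which is immediate from edge-disjointness of $\mathcal{B}$ together with $S \cap T = \varnothing$ (so the two summands for a fixed biclique enumerate disjoint sets of edges). I would also note in passing that this argument computes $c(S,T)$ as an integer sum — indeed it would work for arbitrary edge weights — which is precisely why the Boolean matrix-vector multiplication routines behind \Cref{thm:BW} cannot be invoked here as a black box, as already remarked before the proposition.
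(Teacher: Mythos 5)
Your proof is correct and follows essentially the same approach as the paper: the same decomposition $c(S,T) = \sum_i \bigl(|L_i \cap S|\,|R_i \cap T| + |R_i \cap S|\,|L_i \cap T|\bigr)$ and the same bitvector-based computation of the intersections. You simply spell out the justification for why each edge is counted exactly once, which the paper leaves implicit.
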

\begin{proof}
    Let $\mathcal{B} = \{(L_1, R_1), \ldots, (L_k, R_k)\}$, and observe that
    \(
    c(S, T) = \sum_{i=1}^k |S \cap L_i| \cdot |T \cap R_i| + |S \cap R_i|\cdot |T \cap L_i|.
    \)
    As in the proof of~\Cref{prop:is-queries}, these intersections can be computed in $O(w(\mathcal{B}) + |S| + |T|)$ by constructing corresponding bitvectors $B_S$ and $B_T$.
\end{proof}

\subsection{Densest subgraph approximations}

 We now present a more interesting application of these biclique representations, which requires the CB representation. 

 The \emph{densest subgraph problem} asks for a non-empty subset $S \subseteq V$ that maximizes the degree density $\delta(S) := |E(G[S])|/|S|$. Algorithms for this problem are widely used in many practical scenarios, as for instance identifying dense clusters in social media networks. For a survey on this problem and its applications, we refer the reader to Lanciano et al.~\cite{surveyDensest}.

 Arguably, the most famous results regarding the densest subgraph problem were proved by Charikar~\cite{Charikar2000}, who proved that an optimal solution could be computed by linear programming, and that a $2$-approximation\footnote{We use $2$-approximation for consistency with the literature on the problem, but it is technically a $\frac{1}{2}$-approximation since it is a maximization problem.} could be computed in linear time (i.e., $\Theta(|V| + |E|)$), through an elegant greedy peeling algorithm. Charikar's algorithm is simply: start with $H_0 := G$, and then iteratively remove the vertex of smallest degree until the graph is empty (so $H_{i+1} := H_i \setminus \{ \arg\min_{v \in V(H_i)} \deg(v)\}$), and finally return the densest of all the subgraphs $H_i$ seen throughout the process. 

In this subsection we show that $o(|V|^2)$ approximation algorithms are possible if the graph is stored in the CB representation, by modifying  Charikar's algorithm.

\charikar*
For instance, setting $\alpha = \lg \lg n$, we get a $2\lg \lg n$ approximation in time $O(n ^2/\lg \lg \lg n) = o(n^2)$.
\begin{proof}[Proof of~\Cref{thm:charikar-subq}]

    The algorithm, whose pseudocode is presented in~\Cref{alg:ds-approx}, is parameterized by the desired $\alpha > 1$, and proceeds by rounds. 
     \begin{algorithm}
    \caption{Densest Subgraph Approximation}
    \label{alg:ds-approx}
    \begin{algorithmic}[1]
    \Require Graph $G = (V, E)$, assuming wlog that $V = \{1, \ldots, n\}$. Approximation parameter $\alpha > 1$

    \State $\textsf{maxDensity} \gets 0$
    \State $\textsf{densestSubgraph} \gets \varnothing$
    \State $t \gets 1$
    \State $H \gets \textsf{copy}(G)$
    \State $\textsf{removed} \gets [\textsf{false}, \ldots, \textsf{false}]$ \Comment{Array of length $n$, indexed from $1$}
    \While{$|V(H)| > 0$} 
        \State $\textsf{degreeSum} \gets 0$
        \State $\textsf{toRemove} \gets \varnothing$
        \ForAll{$v \in \{1, \ldots, n\}$} \Comment{In $O(n^2 / \lg n)$ time}
            \If{$\textsf{removed}[v] = \textsf{false}$}
           
            \State $d \gets H.\textsf{degree}(v)$  
            \State $\textsf{degreeSum} \gets \textsf{degreeSum} + d$
            \If{$d < t$} 
                \State $\textsf{toRemove} \gets \textsf{toRemove} \cup \{ v \}$
            \EndIf
             \EndIf
        \EndFor
        \State $\textsf{currDensity} \gets \textsf{degreeSum}/(2|V(H)|)$
        \If{$\textsf{currDensity} > \textsf{maxDensity}$}
            \State $\textsf{maxDensity} \gets \textsf{currDensity}$ 
            \State $\textsf{densestSubgraph} \gets \textsf{copy}(H)$ \Comment{In $O(n^2/\lg n)$ time}
        \EndIf
        \ForAll{$v \in \textsf{toRemove}$} \Comment{In $O(n^2 / \lg n)$ time}
           \State $H.\textsf{lazyRemove}(v)$ \Comment{$v$ will no longer be counted for the degree of other vertices}
           \State $\textsf{removed}[v] \gets \textsf{true}$
        \EndFor
        
        \State $t \gets \alpha \cdot t$ \Comment{Increase threshold}
    \EndWhile
    
    
\end{algorithmic}
\end{algorithm}
    Let $H_i$ be the graph at the beginning of round $i$, with $H_0 := G$. Then, at the end of round $i \geq 0$, all vertices whose degree in $H_i$ is less than $\alpha^i$ are deleted, resulting in a subgraph $H_{i+1}$. Naturally, the process stops when all vertices get removed due to having degree less than $\alpha^k$ for some $k$, and then the algorithm returns the densest among all the subgraphs $H_i$ seen throughout the rounds. To avoid introducing complicated data structures, instead of actually removing vertices from the graph, we will simply \emph{mark them as removed} in a way that ensures that they will not be considered in further rounds. More concretely, we will implement an operation $\textsf{lazyRemove}(v)$ such that a vertex $v$ lazily removed will no longer count toward the degree of other vertices.

    First, we analyze the runtime of~\Cref{alg:ds-approx}. Since each vertex has degree at most $n-1$ in $G$, the algorithm stops after $k$ rounds when $\alpha^k > n-1$, and thus in no more than $\lg_{\alpha}(n)$ rounds. The runtime of each round of~\Cref{alg:ds-approx} is $O(n^2/ \lg n)$ as long as we can query the degree of any number of vertices in $O(n^2 / \lg n)$ as well as lazily deleting any number of vertices in $O(n^2 / \lg n)$, and copying the graph in $O(n^2/ \lg n)$. Thus, provided we can support such operations with the described runtime, the total runtime is $O(n^2/\lg n) \cdot \lg_{\alpha}(n) = O(n^2/\lg \alpha)$ as desired. 

    \begin{claim}
        On a CB representation of weight $w$, we can query the degree of all vertices in time $O(w)$, and lazily delete an arbitrary subset of them in $O(w)$.\label{claim:queries}
    \end{claim}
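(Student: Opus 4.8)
The plan is to keep, on top of the CB representation, two integer counters $\mathsf{cnt}_L[i]$ and $\mathsf{cnt}_R[i]$ for every biclique $B_i = (L_i, R_i)$ of the underlying partition $\mathcal{B}$, alongside the boolean array $\mathsf{removed}[1..n]$ that~\Cref{alg:ds-approx} already maintains. Writing $D := \{v : \mathsf{removed}[v]\}$ for the set of lazily deleted vertices, we preserve the invariant $\mathsf{cnt}_L[i] = |L_i \setminus D|$ and $\mathsf{cnt}_R[i] = |R_i \setminus D|$. Initializing the counters to $|L_i|, |R_i|$ and the array to all-false costs $O(k + n) = O(w)$: every biclique has at least two vertices, so $k \le w/2$, and since we may assume $G$ has no isolated vertices (they never help the densest-subgraph objective), each vertex lies in at least one biclique, so $n \le w$.

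The point to observe is that, because $\mathcal{B}$ is an \emph{edge-disjoint} partition of $E(G)$, each edge of $G[V \setminus D]$ lies in exactly one $B_i$, and there it joins some present $u \in L_i$ to some present $w \in R_i$. Hence, for any present vertex $u$,
\[
\deg_{G[V\setminus D]}(u) = \sum_{i\,:\,u\in L_i} |R_i\setminus D| + \sum_{i\,:\,u\in R_i} |L_i\setminus D| = \sum_{i\,:\,u\in L_i}\mathsf{cnt}_R[i] + \sum_{i\,:\,u\in R_i}\mathsf{cnt}_L[i],
\]
with every relevant edge counted exactly once and none counted zero times. So to answer an all-vertex degree query I would zero out the array $\deg[\,\cdot\,]$ and then, for each present vertex $v$, traverse its back-reference list, adding $\mathsf{cnt}_R[i]$ to $\deg[v]$ when the stored side bit says $v \in L_i$ and $\mathsf{cnt}_L[i]$ when it says $v \in R_i$; this touches each (vertex, biclique) incidence once, for a total of $O(n) + \sum_v \ell_{\mathcal{B}}(v) = O(w)$. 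For the lazy deletion of an arbitrary subset $D' \subseteq V \setminus D$ of present vertices, I would, for each $v \in D'$, traverse its back-reference list and decrement $\mathsf{cnt}_L[i]$ (resp.\ $\mathsf{cnt}_R[i]$) for every biclique $i$ with $v \in L_i$ (resp.\ $v \in R_i$), and then set $\mathsf{removed}[v] \gets \mathsf{true}$. Each incidence of a newly removed vertex causes exactly one decrement, so the invariant is restored, and the cost is $\sum_{v\in D'}\ell_{\mathcal{B}}(v) \le \sum_{v\in V}\ell_{\mathcal{B}}(v) = w$.

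I do not expect a genuine obstacle here: the only points worth double-checking are that the edge-disjointness of $\mathcal{B}$ is exactly what makes each edge contribute once (and none zero times) to the degree sums, and that the counter invariant survives every lazy deletion — both immediate. As an alternative, one can forgo the persistent counters and instead recompute $|L_i \setminus D|$ and $|R_i \setminus D|$ from the $\mathsf{removed}$ array by a single linear pass over each biclique's vertex lists at the start of every degree query; this is still $O(w)$ per query and makes a lazy deletion just a matter of flipping $|D'|$ bits.
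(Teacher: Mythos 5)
Your proof is correct and takes essentially the same approach as the paper: the counters $\mathsf{cnt}_L[i]$, $\mathsf{cnt}_R[i]$ play exactly the role of the \texttt{L\_len} and \texttt{R\_len} fields that the paper stores per biclique, and the degree query and lazy deletion both work by traversing each vertex's back-reference list and adding or decrementing the opposite side's counter. Your extra remarks (edge-disjointness guaranteeing each present edge is counted exactly once; $k \le w/2$ and $n \le w$ up to discarding isolated vertices) are just more explicit versions of what the paper leaves implicit.
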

    \begin{innerproof}{\Cref{claim:queries}}
        To compute the degree of a vertex $v$, it suffices to iterate over the list of bicliques it belongs to (i.e., \texttt{IncList} in~Listing~\ref{lst:listing-c}), and then for each such biclique $B = (L, R)$, if $v$ is on its left side (i.e., \texttt{on\_left}), then we add $|R|$ (i.e., \texttt{R\_len}) to its degree, and otherwise $|L|$ (i.e., \texttt{L\_len}). This takes time proportional to the number of bicliques containing $v$, or in other words, its load $\ell(v)$. Therefore the total runtime is $O(\sum_{v} \ell(v)) = O(w)$.  For the lazy deletion of a vertex $v$, it suffices to iterate over the bicliques $B = (L, R)$ containing $v$, and if $v \in L$ we decrease \texttt{L\_len} by $1$, and otherwise \texttt{R\_len} by 1. Since the degree computation of further vertices will be based on adding up \texttt{L\_len} or  \texttt{R\_len}, this ensures future degree computations will be correct. The runtime is again $O(\sum_{v} \ell(v)) = O(w)$. Finally, since all indices and pointers use $\Theta(\lg n)$ bits, and thus constantly many RAM words, the cost of copying is proportional to the number of  indices and pointers, which is $O(w)$.
    \end{innerproof}

    Now we prove correctness. First, we will need a standard idea from the densest-subgraph literature (see~\cite[Theorem 3]{surveyDensest}). We will use notation $\deg(v, S)$ for the degree of a vertex $v$ in the subgraph $G[S]$. 
    \begin{claim}\label{claim:optimal_density}
        Let $\delta^\star := \max \{ \delta(S) : S \subseteq V(G)\}$, and $S^\star \subseteq V$ a subset for which $\delta(S^\star) = \delta^\star$. Then, for every $v \in S^\star$, we have $\deg(v, S^\star) \geq \delta^\star$.
    \end{claim}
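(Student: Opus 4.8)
The plan is to argue by contradiction using the standard "removing a low-degree vertex cannot hurt density" trick. Suppose for contradiction that some vertex $v \in S^\star$ has $\deg(v, S^\star) < \delta^\star$. I would then compare $\delta(S^\star)$ with $\delta(S^\star \setminus \{v\})$ directly. Writing $s := |S^\star|$ and $m := |E(G[S^\star])|$, so that $\delta^\star = m/s$, deleting $v$ removes exactly $\deg(v, S^\star)$ edges, hence
\[
\delta(S^\star \setminus \{v\}) = \frac{m - \deg(v, S^\star)}{s - 1}.
\]
(Here $s \ge 2$: if $s = 1$ then $m = 0$, so $\delta^\star = 0$ and $\deg(v,S^\star) = 0 = \delta^\star$, contradicting the assumption; so the case $s=1$ need not be treated further.)

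Next I would show $\delta(S^\star \setminus \{v\}) > \delta^\star$, contradicting the maximality of $\delta^\star$. Cross-multiplying, $\frac{m - \deg(v,S^\star)}{s-1} > \frac{m}{s}$ is equivalent to $s\bigl(m - \deg(v,S^\star)\bigr) > m(s-1)$, i.e. to $\deg(v, S^\star) s < m$, i.e. to $\deg(v, S^\star) < m/s = \delta^\star$, which is precisely our assumption. Since $S^\star \setminus \{v\}$ is a nonempty subset of $V(G)$ with strictly larger degree density than $S^\star$, this contradicts $\delta(S^\star) = \delta^\star = \max\{\delta(S) : S \subseteq V(G)\}$. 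Hence no such $v$ exists, which is the claim.

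I do not anticipate any real obstacle here; the only things to be careful about are (i) handling the degenerate case $s = 1$ (equivalently, the edgeless graph), which is immediate, and (ii) making sure the inequality manipulation is stated with the correct direction. The argument is entirely self-contained and does not rely on any earlier result beyond the definitions of $\delta$ and $\deg(\cdot,\cdot)$ given in this subsection.
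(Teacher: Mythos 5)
Your proof is correct and takes essentially the same approach as the paper: both compare $\delta(S^\star)$ with $\delta(S^\star\setminus\{v\})$ and use maximality, differing only in that you phrase it as a contradiction while the paper rearranges the inequality directly; you also handle the degenerate $|S^\star|=1$ case explicitly, which the paper leaves implicit.
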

    \begin{innerproof}{\Cref{claim:optimal_density}}
            Let $v$ be any vertex in $S^\star$.
            By the optimality of $S^\star$ we have $\delta(S^\star) \geq \delta(S^\star \setminus \{ v\})$, and thus $\frac{|E(G[S^\star])|}{|S^\star|} \geq \frac{|E(G[S^\star])| - \deg(v, S^\star)}{|S^\star| -1}$, from where $\deg(v, S^\star) \geq |E(G[S^\star])|/|S^\star| = \delta^\star$.
            
    \end{innerproof}
        
  Now observe that at the end of round $i$, all vertices of degree less than $\alpha^i$ have been removed, and thus $\deg(v, V(H_{i+1})) \geq \alpha^i$ for every $v \in V(H_{i+1})$, and thus by the handshake lemma,
  \(
  |E(H_{i+1})| \geq \frac{\alpha^i}{2} \cdot |V(H_{i+1})|
  \).
  Therefore, as long as $|V(H_{i+1})| > 0$, we have
  \begin{equation}\label{eq:density_threshold}
  \delta(V(H_{i+1})) = \frac{|E(H_{i+1})|}{|V(H_{i+1})|} \geq \frac{\alpha^i}{2}.
  \end{equation}
  Let $i^\star$ be such that $1 \leq \alpha^{i^\star} \leq \delta^\star <\alpha^{i^{\star}+1}$, which exists since $i \mapsto \alpha^i$ is strictly increasing\footnote{Here we assume without loss of generality that $\delta^\star \ge 1$, since any cycle has degree density $1$, and for a tree $G$ on $n$ vertices the densest subgraph is $G$ itself, with density $1 - 1/n$, which can be easily checked in linear time.}. Then, note that we have  $S^\star \subseteq V(H_{i^\star + 1})$ since by~\Cref{claim:optimal_density} all vertices in $S^\star$ have degree at least $\delta^\star$ within $S^\star$ and thus could not be deleted during round $i^\star$ since the threshold was $\alpha^{i^\star} \leq \delta^\star$. As clearly $|S^\star| > 0$,~\Cref{eq:density_threshold} gives \[
    \delta(V(H_{i^\star + 1})) \geq \frac{\alpha^{i^{\star}}}{2} = \frac{\alpha^{i^\star + 1}}{2\alpha} > \frac{\delta^\star}{2\alpha}. 
  \]
As $H_{i^\star + 1}$ is one of the subsets considered by the algorithm, we conclude that~\Cref{alg:ds-approx} returns a $2\alpha$-approximation. Together with the runtime analysis above, this concludes the proof.
\end{proof}

\section{Finding large balanced bicliques}


In this section, we study the algorithmic problem of finding a balanced biclique $K_{t, t}$, for $t$ as large as possible, that is guaranteed by the edge density $\gamma := |E|/\binom{|V|}{2}$ of the input graph. In order to contextualize our results, we will consider known upper and lower bounds, as well as previous results of Mubayi and Tur\'an~\cite{mubayiFindingBipartiteSubgraphs2010}. These are summarized in~\Cref{fig:bilcique_finding}.  

\begin{figure}
    \centering
\pgfmathdeclarefunction{logtwo}{1}{\pgfmathparse{ln(#1)/ln(2)}}
\pgfmathdeclarefunction{hbin}{1}{%
  \pgfmathparse{-(#1)*logtwo(#1) - (1-#1)*logtwo(1-#1)}%
}

\begin{tikzpicture}
  \begin{axis}[
    width=13cm, height=8cm,
    domain=0.001:0.995, samples=1000,
    xlabel={Graph density $\gamma$}, ylabel={Balanced biclique size},
    xmin=0.01,
    xmax=1,
    ymax=10,
    grid=both,
    legend cell align={left},
    legend style={at={(0.02,0.98)},anchor=north west,draw=none,fill=white,fill opacity=0.8}
  ]

    \addplot[ultra thick, dashed, red] { 2 / logtwo(1/x) };
    \addlegendentry{\footnotesize $ \frac{2}{\lg(1/\gamma)}$, Upper bound barrier \Cref{lemma:first-moment-t2}}

    \addplot[ultra thick, dashed, blue] { 1 / logtwo(1/x) };
    \addlegendentry{\footnotesize $ \frac{1}{\lg(1/\gamma)}$, KST guarantee \Cref{cor:kst-guarantee}}

    \addplot[ultra thick, green!70!black] { x / hbin(x) };
    \addlegendentry{\footnotesize $ \frac{\gamma}{h_2(\gamma)}$, \Cref{thm:rand-alg}, in $n^{1 + o(1)}$}

    \addplot[ultra thick, purple] { 1 / logtwo(4*2.7172/x) };
    \addlegendentry{\footnotesize $ \frac{1}{\lg(4e/\gamma)}$, \cite{mubayiFindingBipartiteSubgraphs2010} in $O(n^{2.42})$}

    \addplot[ultra thick, black] { 0.2 / logtwo(4*2.7172/x) };
    \addlegendentry{\footnotesize $ \frac{1}{5\lg(4e/\gamma)}$, \cite{mubayiFindingBipartiteSubgraphs2010} in $O(n^{1.42})$}

  \end{axis}
\end{tikzpicture}
    \caption{Comparison of the leading constant for $\Omega(\lg n)$ balanced bicliques, and algorithmic runtimes.}
    \label{fig:bilcique_finding}
\end{figure}

First, we shall show a limit to the largest biclique we can aim to find based only on edge density.
\begin{lemma}\label{lemma:first-moment-t2}
    Let $\gamma \in (0, 1)$. Then, with high probability, a random graph $G \sim G(n, \gamma)$ has edge density $\gamma \pm o(1)$ and yet does not contain a $K_{t,t}$ with $t \geq \frac{2}{\lg(1/\gamma)}\lg n$.
\end{lemma}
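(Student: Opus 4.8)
The plan is to use a first-moment (union bound) argument. First, the claim about edge density: $|E(G)|$ is a sum of $\binom{n}{2}$ i.i.d.\ Bernoulli($\gamma$) random variables, so $\mathbb{E}|E(G)| = \gamma\binom{n}{2}$, and by a standard Chernoff/Hoeffding bound $|E(G)| = (1\pm o(1))\gamma\binom{n}{2}$ with high probability; equivalently the density is $\gamma \pm o(1)$. This part is routine.

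For the main assertion, fix $t := \lceil \frac{2}{\lg(1/\gamma)}\lg n\rceil$ (it suffices to rule out this value, since containing a $K_{s,s}$ for $s \ge t$ implies containing a $K_{t,t}$). Let $X$ denote the number of (ordered, say) pairs $(A,B)$ of disjoint $t$-subsets of $V(G)$ that induce a complete bipartite graph between them. The probability that a fixed such pair spans a complete bipartite graph is exactly $\gamma^{t^2}$. Hence
\[
\mathbb{E}[X] \le \binom{n}{t}^2 \gamma^{t^2} \le \left(\frac{en}{t}\right)^{2t} \gamma^{t^2} = \exp\!\left(2t\ln\frac{en}{t} - t^2 \ln(1/\gamma)\right).
\]
Now I would show the exponent tends to $-\infty$: write it as $t\bigl(2\ln\frac{en}{t} - t\ln(1/\gamma)\bigr)$. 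Since $t \ge \frac{2\lg n}{\lg(1/\gamma)}$, we have $t\ln(1/\gamma) = t\lg(1/\gamma)\ln 2 \ge 2\lg n \ln 2 = 2\ln n$, so the bracket is at most $2\ln\frac{en}{t} - 2\ln n = 2\ln\frac{e}{t} = 2(1 - \ln t) \to -\infty$ (as $t\to\infty$ with $n$, which holds because $\gamma$ is a fixed constant in $(0,1)$, so $t = \Theta(\lg n)$). Therefore $\mathbb{E}[X] \to 0$, and by Markov's inequality $\mathbb{P}[X \ge 1] \to 0$, i.e.\ with high probability $G$ contains no $K_{t,t}$ and hence no $K_{s,s}$ with $s \ge t \ge \frac{2}{\lg(1/\gamma)}\lg n$.

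There is no serious obstacle here; the only mild subtlety is making sure the two high-probability events (density close to $\gamma$, and no large biclique) are combined correctly, which is immediate by a union bound over two events each of probability $o(1)$. One should also double-check the boundary regime where $\gamma$ is allowed to depend on $n$ — but the statement as given takes $\gamma \in (0,1)$ fixed, so $t = \Theta(\lg n) \to \infty$ and the computation above goes through verbatim. If one wanted the sharper statement for $\gamma = \gamma_n \to 0$ or $\to 1$, one would track the $o(1)$ terms in the exponent more carefully, but that is not needed for this lemma.
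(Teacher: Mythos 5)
Your proof is correct and follows essentially the same first-moment argument as the paper: bound the expected number of $K_{t,t}$'s by $\binom{n}{t}^2\gamma^{t^2} \le (en/t)^{2t}\gamma^{t^2}$, then use $t\lg(1/\gamma)\ge 2\lg n$ to show the exponent tends to $-\infty$. The only cosmetic differences are that you write the exponent in natural logs and invoke Markov's inequality explicitly, whereas the paper bounds the probability directly; the substance is identical.
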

\begin{proof}
    The number of edges is $\gamma\binom{n}{2} \pm o(n^2)$ with high probability by a standard Chernoff bound.
    For the second part, we have \(
    \prob[G \text{ contains } K_{t,t}] < \binom{n}{t}\binom{n}{t} \gamma^{t^2} \leq \left(\frac{en}{t}\right)^{2t} \gamma^{t^2}
    \), and thus to show this probability goes to $0$  it suffices to show that $\lg(\left(\frac{en}{t}\right)^{2t} \gamma^{t^2}) \to -\infty$, and equivalently,
    \[
    2t \lg(n) + 2t\lg(e) - 2t\lg t - t^2\lg(1/\gamma) \to -\infty.
    \]
    But for $t \geq \frac{2}{\lg(1/\gamma)}\lg n$ we have $t^2\lg(1/\gamma) \geq  2t\lg (n)$, so
    \[
    2t \lg(n) + 2t\lg(e) - 2t\lg t - t^2\lg(1/\gamma) \leq 2t\lg(e) - 2t\lg t \to -\infty.\qedhere
    \]
\end{proof}

In the opposite direction, we use upper bounds on the extremal problem $\mathrm{ex}(n, K_{t, t})$, the maximum number of edges in a $K_{t,t}$-free graph on  $n$-vertices. While there have been improvements for many combinations of parameters since the original KST theorem (for a nice survey, see~\cite{füredi2013historydegeneratebipartiteextremal}), the following will be enough for our purposes:
\begin{restatable}[\cite{Kovari1954}, as stated by~{\cite[Thm.~2.22]{füredi2013historydegeneratebipartiteextremal}}]{theorem}{precisekst} \label{thm:precise-KST}
    The maximum number of edges in a $K_{t,t}$-free graph $G$ with $n$ vertices, denoted $\mathrm{ex}(n, K_{t, t})$, satisfies
    \[
    \mathrm{ex}(n, K_{t, t}) \leq \frac{1}{2}(t-1)^{1/t} n^{2-1/t} + \frac{1}{2}(t-1)n.
    \]
\end{restatable}

\begin{corollary}\label{cor:kst-guarantee}
    Every graph on $n$ vertices with edge density $\gamma < 1$ such that $\max\{\gamma^{-1}, (1-\gamma)^{-1}\} = n^{o(1)}$, contains a $K_{t, t}$ with $t \geq (1- o(1))\frac{\lg n}{\lg(1/\gamma)} $.
\end{corollary}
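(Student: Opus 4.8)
Proof proposal for \Cref{cor:kst-guarantee}.

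The plan is to deduce the statement directly from the Kővári--Sós--Turán bound of \Cref{thm:precise-KST}. The key observation is that if $G$ is an $n$-vertex graph with $|E(G)| > \mathrm{ex}(n, K_{t,t})$ then $G$ is not $K_{t,t}$-free, so it suffices to exhibit a value $t = (1-o(1))\frac{\lg n}{\lg(1/\gamma)}$ for which
\[
    \gamma\binom{n}{2} > \frac{1}{2}(t-1)^{1/t} n^{2-1/t} + \frac{1}{2}(t-1)n .
\]
Write $\beta := \lg(1/\gamma)$, so $\gamma = 2^{-\beta}$ and $n^{-1/t} = 2^{-(\lg n)/t}$. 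The hypothesis $\gamma^{-1} = n^{o(1)}$ says precisely $\beta = o(\lg n)$ (equivalently $\gamma = n^{-o(1)}$), while the hypothesis $(1-\gamma)^{-1} = n^{o(1)}$ combined with the elementary inequality $\beta = -\lg\gamma \ge (1-\gamma)/\ln 2$ gives $1/\beta = n^{o(1)}$. In particular $\lg n/\beta \to \infty$ and $\lg(\lg n/\beta) = o(\lg n)$, since $\lg n/\beta = n^{o(1)}$.

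I would then take $t := \bigl\lfloor (1-\varepsilon)\tfrac{\lg n}{\beta}\bigr\rfloor$, where $\varepsilon = \varepsilon(n)\to 0$ is chosen to vanish slowly; one can take, e.g., $\varepsilon(n) := \tfrac{10\lg(\lg(n)/\beta + 2)}{\lg n}$, which tends to $0$ by the previous paragraph. Since $\varepsilon\to 0$ and $\lg n/\beta\to\infty$ we immediately get $t\to\infty$ and $t \ge (1-\varepsilon)\tfrac{\lg n}{\beta} - 1 = (1-o(1))\tfrac{\lg n}{\beta}$, which is exactly the bound asserted. To verify the displayed inequality I would estimate both sides. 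On the left, $\gamma\binom{n}{2} = \tfrac12\gamma n^2(1-1/n)$. On the right, $t\to\infty$ gives $(t-1)^{1/t}\le t^{1/t} = 1+o(1)$; the bound $t\le(1-\varepsilon)\tfrac{\lg n}{\beta}$ gives $\tfrac{\lg n}{t}\ge\tfrac{\beta}{1-\varepsilon}$, hence $n^{2-1/t}\le n^2\gamma^{1/(1-\varepsilon)} = n^2\gamma\cdot 2^{-\beta\varepsilon/(1-\varepsilon)}$; and $\tfrac12(t-1)n \le \tfrac{n\lg n}{2\beta} = n^{1+o(1)} = o(\gamma n^2)$ because $\gamma n^2 = n^{2-o(1)}$. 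Dividing through by $\tfrac12\gamma n^2$, the required inequality reduces to
\[
    2^{\beta\varepsilon/(1-\varepsilon)} > 1 + o(1) .
\]

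The delicate point — the main obstacle — is precisely this last reduction: one must ensure that $\varepsilon(n)$ decays slowly enough that $2^{\beta\varepsilon/(1-\varepsilon)}-1$ beats the error terms absorbed into the $o(1)$ above. Using $2^x\ge 1+x\ln 2$ one has $2^{\beta\varepsilon/(1-\varepsilon)}-1\ge\beta\varepsilon\ln 2$, while the $o(1)$ on the right is of order $t^{1/t}-1 = O((\lg t)/t) = O\bigl(\beta\lg(\lg n/\beta)/\lg n\bigr)$ (the contributions $O(1/n)$ and $O(n^{-1+o(1)})$ being negligible), so it is enough that $\varepsilon \gg \lg(\lg n/\beta)/\lg n$, which holds for the choice above. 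This is also exactly where the hypothesis $(1-\gamma)^{-1} = n^{o(1)}$ is used: without it $1/\beta$ could be polynomially large, $\lg(\lg n/\beta)$ would be $\Theta(\lg n)$, and no $\varepsilon\to 0$ would suffice — which reflects the fact that for $\gamma$ extremely close to $1$ the target $\tfrac{\lg n}{\lg(1/\gamma)}$ grows too fast for KST to certify. The remaining steps are routine estimates.
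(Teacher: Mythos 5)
Your proof is correct, and it follows the same overall strategy as the paper's: apply the explicit K\H{o}vári--Sós--Turán bound of \Cref{thm:precise-KST} to show that a graph with $\gamma\binom{n}{2}$ edges exceeds $\mathrm{ex}(n, K_{t,t})$ for a suitably chosen $t$. The crucial difference is in the choice of $t$ and the bookkeeping. You take $t = \lfloor(1-\varepsilon)\lg n/\beta\rfloor$ with $\beta := \lg(1/\gamma)$ and a slowly vanishing $\varepsilon$, then reduce the required inequality to $2^{\beta\varepsilon/(1-\varepsilon)} > 1 + o(1)$ and verify, via $2^x \ge 1 + x\ln 2$, that $\beta\varepsilon\ln 2 = \tfrac{10\ln 2\,\beta\lg(\lg n/\beta+2)}{\lg n}$ dominates the error term $t^{1/t}-1 = O(\beta\lg(\lg n/\beta)/\lg n)$. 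This is tight and correct, and you correctly identified the delicate tuning of $\varepsilon$ as the crux. By contrast, the paper sets $t := \lfloor\tfrac{\lg n - \lg\lg n + \lg\lg(1/\gamma)}{\lg(1/\gamma)+1}\rfloor$ and asserts $t = (1-o(1))\tfrac{\lg n}{\lg(1/\gamma)}$; but the denominator $\lg(1/\gamma)+1 = \lg(2/\gamma)$ equals $(1+o(1))\lg(1/\gamma)$ only when $\gamma \to 0$. When $\gamma$ is bounded away from $0$ (e.g. $\gamma = 1/2$, where $\lg(2/\gamma) = 2\lg(1/\gamma)$), the paper's $t$ is $(1-o(1))\tfrac{\lg n}{\lg(2/\gamma)}$, a strictly weaker guarantee than the stated corollary. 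The corollary's statement is nevertheless true (as your computation confirms for all $\gamma$ satisfying the hypotheses), so your argument actually closes a gap in the paper's proof. One small remark: it would be good to note explicitly that $\beta\varepsilon \to 0$ (writing $u := \lg n/\beta$, $\beta\varepsilon = 10\lg(u+2)/u \to 0$), so that $2^{\beta\varepsilon/(1-\varepsilon)}$ is bounded and the division-through step in your reduction is justified cleanly.
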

\begin{proof}
Let $t := \floor{\frac{\lg n - \lg \lg n + \lg \lg(1/\gamma)}{\lg(1/\gamma)+1}} $, and note that as $t= (1- o(1))\frac{\lg n}{\lg(1/\gamma)}$ as desired. Then, observe that
\[
t \leq \frac{\lg n - \lg \lg n + \lg \lg(1/\gamma)}{\lg(1/\gamma)+1} = \frac{\lg n - \lg(\frac{\lg n}{\lg(1/\gamma)})}{\lg(2/\gamma)} \leq \frac{\lg n - \lg t}{\lg(2/\gamma)},
\]
from where
\(
\lg(2/\gamma) \leq \frac{\lg n - \lg t}{t}
\), and thus 
\(
\lg(\gamma/2) \geq \frac{\lg t}{t} - \frac{\lg n}{t} = \lg(t^{1/t} n ^{-1/t}).
\)
By exponentiating, we obtain $
    \gamma/2 \geq t^{1/t} n^{-1/t} \geq (t-1)^{1/t}n^{-1/t}.
$
Moreover, since $\gamma^{-1} = n^{o(1)}$, we have for large $n$ that $\gamma/2 \geq t/n$. Adding this and the previous inequality,
and multiplying by $\frac{n^2}{2}$, we get
\[
\gamma \frac{n^2}{2} \geq \frac{1}{2}(t-1)^{1/t}n^{2-1/t} + \frac{tn}{2} \geq \frac{1}{2}(t-1)^{1/t}n^{2-1/t} + \frac{(t-1 + \gamma)n}{2},
\]
from where by~\Cref{thm:precise-KST},
$E(G) = \gamma \binom{n}{2} \geq \mathrm{ex}(n, K_{t,t})$.
\end{proof}

In terms of previous results, Mubayi and Tur\'an~\cite{mubayiFindingBipartiteSubgraphs2010} presented two algorithmic results: one that finds a balanced biclique of size $(1-o(1))\frac{\lg n}{\lg(4e/\gamma)}$ in time $O(n^{2.42})$, and the second finds a balanced biclique $5$ times smaller than the first one but in time $O(n^{1.42})$ assuming the degree of each vertex can be queried in $O(1)$, and $O(m)$ otherwise. We will present more efficient algorithms that also yield larger bicliques for almost all values of $\gamma$, but especially when $\gamma$ is not constant and tends to $1$, as illustrated in~\Cref{fig:bilcique_finding}. 

An important difference between our algorithms and those of Mubayi and Tur\'an is that their algorithms can never find a balanced biclique of size $\omega(\lg n)$ in polynomial time, even when $\gamma \to 1$, since their algorithm explores $\Omega(2^t)$ subsets for finding a $K_{t, t}$. In contrast, leveraging the density-aware biclique partitions we obtain from~\Cref{thm:gamma-2}, we can obtain bicliques of size e.g., $\Omega(\lg^2 n/\lg \lg n)$ when $\gamma \geq 1 - \frac{1}{\lg n}$, as we will show next.

First, we notice that while Chung, Erd\H{o}s, and Spencer leveraged the existence of large bicliques guaranteed by the KST theorem in order to find a biclique partition of small weight, the following observation shows that the implication can be reversed: finding any biclique partition of small weight implies finding a relatively large biclique.

\begin{lemma}[{\cite[Observation~1]{cardinal_et_al:LIPIcs.ESA.2025.67}}]\label{lemma:e-bp}
    Let $\mathcal{B}$ be a biclique cover of a graph $G$. Then, $\mathcal{B}$ contains a biclique $K_{a, b}$ with
    \(
        \min\{a, b\} \geq |E| / w(\mathcal{B}).
    \)
\end{lemma}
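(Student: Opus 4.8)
The plan is to argue by a single averaging (pigeonhole) step over the bicliques of $\mathcal{B}$. Write $\mathcal{B} = \{(L_1,R_1),\dots,(L_k,R_k)\}$ and set $a_i := |L_i|$, $b_i := |R_i|$, so that $|V(B_i)| = a_i + b_i$ and $w(\mathcal{B}) = \sum_{i=1}^k (a_i + b_i)$. Since $\mathcal{B}$ is a cover of $G$, every edge lies in at least one member $B_i$, and $B_i$ contains exactly $a_i b_i$ edges; summing over $i$ (possibly with multiplicity, which only helps) therefore gives $|E| \le \sum_{i=1}^k a_i b_i$.

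Next I would apply the elementary inequality $a_i b_i = \min\{a_i,b_i\}\cdot \max\{a_i,b_i\} \le \min\{a_i,b_i\}\cdot (a_i + b_i) = \min\{a_i,b_i\}\cdot |V(B_i)|$, valid for every $i$. Let $t := \max_{i} \min\{a_i,b_i\}$. Substituting into the previous bound yields $|E| \le t \sum_{i=1}^k |V(B_i)| = t\cdot w(\mathcal{B})$, hence $t \ge |E|/w(\mathcal{B})$. Taking the index $i$ that attains the maximum defining $t$, the biclique $B_i = (L_i,R_i)$ is a $K_{a,b}$ with $a = a_i$, $b = b_i$ and $\min\{a,b\} = t \ge |E|/w(\mathcal{B})$, which is exactly the claimed conclusion.

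There is essentially no obstacle here: the argument rests only on the fact that a cover forces $\sum_i a_i b_i \ge |E|$ together with the trivial bound $a_i b_i \le \min\{a_i,b_i\}\,(a_i + b_i)$. I would note two minor points for completeness: first, only the covering property (not disjointness) is used, since overcounting edges can only increase the left-hand side of $\sum_i a_i b_i \ge |E|$; second, bicliques with an empty side contribute $0$ to both sides and can be ignored, and the statement is vacuously true when $|E| = 0$.
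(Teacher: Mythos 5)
Your proof is correct and uses essentially the same argument as the paper: both rely on $|E| \le \sum_i |L_i||R_i|$, the bound $|L_i||R_i| \le \min\{|L_i|,|R_i|\}(|L_i|+|R_i|)$, and the definition of weight. The only cosmetic difference is that the paper phrases it as a proof by contradiction (assuming all bicliques have small minimum side) while you give the direct averaging version.
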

\begin{proof}
Let $\{(L_1, R_1), \ldots, (L_k, R_k)\}$ be a biclique cover for $G$ of weight $ w(\mathcal{B})$ and with $|L_i| \leq |R_i|$ for every $i$. Assume expecting a contradiction that $G$ does not contain a $K_{t,t}$ for $t \geq |E|/w(\mathcal{B})$, and thus $|L_i| < |E|/w(\mathcal{B})$ for each $i$. Then,
\[
|E| \leq \sum_{i=1}^k |L_i|\cdot |R_i| < \sum_{i=1}^k \frac{|E|}{w(\mathcal{B})} |R_i| \leq \sum_{i=1}^k \frac{|E|}{w(\mathcal{B})} (|L_i| + |R_i|) = |E|,
\]
a contradiction.
\end{proof}

Using the deterministic $O(m)$ algorithm from~\Cref{thm:gamma-2}, we obtain a biclique partition $\mathcal{B}$ of weight at most $(1 + o(1))\frac{h_2(\gamma) n^2}{2 \lg n}$, and then simply checking for the largest balanced biclique in $\mathcal{B}$, we obtain a $K_{t, t}$ which according to~\Cref{lemma:e-bp} satisfies
\[
t \geq \frac{\gamma \binom{n}{2}}{(1 + o(1))\frac{h_2(\gamma) n^2}{2 \lg n}} \geq (1- o(1))\frac{\gamma}{h_2(\gamma)} \lg n.
\]
This is a stronger quantitative guarantee than we get with Mubayi and Tur\'an's algorithm while still being efficient. In fact, building the entire biclique partition given by \Cref{thm:gamma-2} is unnecessary if we only want to find a single $K_{t,t}$ of the desired size, so we can actually find such a biclique in time $n^{1+o(1)}$ assuming the right computational model. Without randomness or $O(1)$ access to degrees, it is easy to show a lower bound of $\Omega(n^2)$ when $\gamma$ is fixed: if one queries $o(n^2)$ edges, it can be that none of them are present even if the graph has positive density. Therefore, we consider both randomized algorithms without $O(1)$ degree queries and deterministic algorithms with $O(1)$ degree queries. We assume an adjacency matrix representation for the input graph. We start with a couple lemmas.

\begin{lemma}\label{lemma:density-degrees-edges}
    Let $G = (V, E)$ be a graph on $n$ vertices and edge density $\gamma$ such that $\gamma^{-1} = n^{o(1)}$, and $\partSize = O(\lg^k n)$ for some $k$ be an integer. Then, if $D \subseteq V$ consists of the $\partSize$ vertices with highest degree, then the number of edges between $D$ and $V \setminus D$ is at least $(1-o(1))\partSize(n-\partSize)\gamma$ with the $o(1)$ term being $O\left(\frac{r}{\gamma n}\right)$.
\end{lemma}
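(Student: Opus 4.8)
The plan is a short double-counting argument exploiting that the $\partSize$ highest-degree vertices have above-average degree. First I would note that $\sum_{v\in V} d(v) = 2|E| = 2\gamma\binom{n}{2} = \gamma n(n-1)$, so the average degree of $G$ equals $\gamma(n-1)$. Since $D$ is the set of the $\partSize$ vertices of largest degree, the average degree over $D$ is at least the global average — an elementary rearrangement fact, that among $n$ reals the mean of the $\partSize$ largest is at least the mean of all of them — which gives
\[
    \sum_{v\in D} d(v) \;\ge\; \partSize\,\gamma(n-1).
\]

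Next I would count the edges leaving $D$ through the identity $e(D, V\setminus D) = \sum_{v\in D} d(v) - 2\,e(G[D])$, where $e(G[D]) \le \binom{\partSize}{2} \le \partSize^{2}/2$. Combining this with the bound above and using $n-1 \ge n-\partSize$ gives
\[
    e(D,V\setminus D) \;\ge\; \partSize\,\gamma(n-1) - \partSize^{2} \;\ge\; \partSize\,\gamma(n-\partSize) - \partSize^{2} \;=\; \partSize(n-\partSize)\gamma\left(1 - \frac{\partSize}{\gamma(n-\partSize)}\right).
\]

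Finally I would check the shape of the error term. Since $\partSize = O(\lg^{k} n) = n^{o(1)}$ we have $n-\partSize = (1-o(1))n$, so $\frac{\partSize}{\gamma(n-\partSize)} = O\!\left(\frac{\partSize}{\gamma n}\right)$; and since $\gamma^{-1} = n^{o(1)}$, the quantity $\gamma n$ is at least $n^{1-o(1)}$, which dominates $\partSize = O(\lg^{k} n)$, so $\frac{\partSize}{\gamma n} = o(1)$. This yields $e(D,V\setminus D) \ge \bigl(1 - O(\partSize/(\gamma n))\bigr)\,\partSize(n-\partSize)\gamma$, as claimed. I do not expect a genuine obstacle in this lemma; the only things requiring a little care are justifying the ``mean of the top $\partSize$ exceeds the global mean'' step and keeping track of the $o(1)$ factors so that the error is presented in the stated form $O(\partSize/(\gamma n))$.
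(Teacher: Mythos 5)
Your proposal is correct and follows essentially the same argument as the paper's proof: bound the sum of degrees in $D$ by $r$ times the global average degree (a rearrangement fact), subtract off $2e(G[D]) \le r^2$, and then absorb the error term into the stated $O(r/(\gamma n))$ factor. The only cosmetic difference is that the paper introduces the notation $\mathrm{avgdeg}(\cdot)$ and states the monotonicity as $\mathrm{avgdeg}(D)\ge\mathrm{avgdeg}(V)$, but the computation is identical line by line.
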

\begin{proof}
Let us use notation $\avgdeg(S) := \frac{1}{|S|} \sum_{v \in S} \deg(v)$ for the average degree of a set of vertices. We trivially have $\avgdeg(D) \geq \avgdeg(V) = \frac{2|E|}{n} = \gamma(n-1).$
    Therefore, denoting by $e(A, B)$  the number of edges between sets $A$ and $B$, and $e(A)$ the number of edges with both endpoints in $A$, we have
    \[
    e(D, V\setminus D) = \left(\sum_{v \in D} \deg(v)\right) - 2 e(D) \geq \partSize \, \avgdeg(V) - \partSize^2 = \partSize(n-1)\gamma - \partSize ^2.
    \]
   Since
   \(
   \partSize(n-1)\gamma - \partSize ^2 \geq \partSize(n-\partSize)\gamma - \partSize^2 = \partSize(n-\partSize)\gamma \left(1 - \frac{\partSize}{\gamma(n-\partSize)}\right),
   \)
    and clearly $\frac{r}{\gamma(n-r)} = O(\frac{r}{\gamma n}) = o(1)$, this concludes the proof.
\end{proof}

\begin{lemma}
       Let $G = (V, E)$ be a graph on $n$ vertices with edge density $\gamma$ such that $\gamma^{-1} =n^{o(1)}$ , and $\partSize = O(\lg^k n)$ for some $k$ an integer. Then, there is a randomized algorithm, running in time $o(n)$ w.h.p. that finds a subset $D \subset V$ of size $\partSize$ such that with high probability there are $\gamma r(n-r)(1-O(\lg^{-1/3} n))$ edges between $D$ and $V \setminus D$.
    \label{lemma:density-alg}
\end{lemma}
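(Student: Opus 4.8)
The plan is to avoid computing the degrees of all $n$ vertices (which by itself costs $\Omega(n)$ time) and instead to pick the top-$\partSize$ vertices of a sublinear uniform \emph{sample}, ranked by \emph{noisy} degree estimates obtained from a handful of adjacency-matrix queries per sampled vertex. I would first record two purely combinatorial facts about the degree sequence, writing $\bar d := \frac1n\sum_{v\in V}\deg(v) = \gamma(n-1)$ and fixing $\delta := \tfrac14\lg^{-1/3}n$. (i) For every $\theta\in(0,1)$, the set $W_\theta := \{v\in V : \deg(v)\ge(1-\theta)\bar d\}$ has $|W_\theta|\ge\theta\bar d$: the vertices outside $W_\theta$ contribute less than $n(1-\theta)\bar d$ to $\sum_v\deg(v)=n\bar d$, so those inside contribute more than $\theta n\bar d$, and each has degree at most $n-1$. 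Since $\bar d = \gamma(n-1) = n^{1-o(1)}$ (this is the only place the hypothesis $\gamma^{-1}=n^{o(1)}$ enters) and $\partSize = O(\lg^k n)$, we get $|W_\delta| = n^{1-o(1)} \gg \partSize$. (ii) At most $\partSize$ vertices have degree exceeding the $\partSize$-th largest degree $d_\partSize$, so $n\bar d = \sum_v\deg(v) \le \partSize n + n\,d_\partSize$, whence $d_\partSize \ge \bar d - \partSize = (1-o(1))\bar d$. Because of (ii), it suffices to output \emph{any} $\partSize$ vertices each of degree at least $(1-4\delta)\bar d = (1-o(1))\bar d$.

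Next I would describe the algorithm: draw $s := \Theta\!\big(\tfrac1{\delta\gamma}(\partSize+\lg n)\big)$ uniformly random vertices to form a set $S$; for each $v\in S$, query $t := \Theta\!\big(\tfrac{\lg n}{\gamma\delta^{2}}\big)$ uniformly random off-diagonal entries of row $v$ of the adjacency matrix and set $\hat d(v) := \tfrac{n-1}{t}\cdot(\#\text{ queried entries equal to }1)$; return the set $D$ consisting of the $\partSize$ vertices of $S$ with largest $\hat d$. Since $\gamma^{-1}=n^{o(1)}$ and $\partSize = O(\lg^k n)$, both $s$ and $t$ are $n^{o(1)}$, so the running time is $O(st + s\lg s) = n^{o(1)} = o(n)$.

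For correctness I would argue as follows. As $\hat d(v)/(n-1)$ is the average of $t$ i.i.d.\ Bernoulli variables with mean $\deg(v)/(n-1)$, a multiplicative Chernoff bound---supplemented by a crude binomial tail bound when $\deg(v)$ is very small---shows that, with the chosen $t$ and probability $1-n^{-\Omega(1)}$, simultaneously over all (at most $n$) vertices of $S$: every $v$ with $\deg(v)<(1-4\delta)\bar d$ has $\hat d(v)<(1-2\delta)\bar d$, while every $v$ with $\deg(v)\ge(1-\delta)\bar d$ has $\hat d(v)\ge(1-2\delta)\bar d$. Separately, $|S\cap W_\delta|$ is binomial with mean $s\,|W_\delta|/n = \Omega(s\delta\gamma) = \Omega(\partSize+\lg n)$, so $|S\cap W_\delta|\ge\partSize$ with probability $1-n^{-\Omega(1)}$. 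On the intersection of these events, the $\ge\partSize$ vertices of $S\cap W_\delta$ all have estimate $\ge(1-2\delta)\bar d$, so the $\partSize$-th largest estimate over $S$ is $\ge(1-2\delta)\bar d$, and hence every $v\in D$ has $\hat d(v)\ge(1-2\delta)\bar d$ and therefore $\deg(v)\ge(1-4\delta)\bar d$. Since $D$ spans fewer than $\partSize^{2}$ pairs,
\[
    e(D, V\setminus D) = \sum_{v\in D}\deg(v) - 2\,e(G[D]) \ge \partSize(1-4\delta)\gamma(n-1) - \partSize^{2},
\]
and because $\partSize^{2} = O(\lg^{2k}n) \le \delta\gamma\,\partSize\,n$ for large $n$, the right-hand side is at least $\gamma\,\partSize\,n\,(1-O(\delta)) \ge \gamma\,\partSize(n-\partSize)(1-O(\lg^{-1/3}n))$, as desired.

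The step I expect to be the main obstacle---or rather the main conceptual subtlety, the rest being routine calculation---is combining the two combinatorial facts above correctly: a sublinear uniform sample will, with overwhelming probability, miss the actual $\partSize$ highest-degree vertices, so one cannot simply certify the guarantee of \Cref{lemma:density-degrees-edges} for the true top-$\partSize$ set. The argument must instead exploit that $\gamma^{-1}=n^{o(1)}$ forces the ``above-$(1-\delta)$-average'' set $W_\delta$ to be polynomially large (so the sample still catches $\ge\partSize$ of its members) and that $d_\partSize$ is already within a $1-o(1)$ factor of the average (so catching \emph{any} $\partSize$ near-average-degree vertices is enough). Calibrating $s$, $t$, and the estimate threshold so that the noisy top-$\partSize$ selection robustly returns only such vertices is then just bookkeeping with Chernoff bounds.
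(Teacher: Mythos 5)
Your proposal is correct, and it takes a genuinely different algorithmic route from the paper's. The paper uses rejection sampling: repeatedly draw a single uniform vertex, estimate its degree with $\lceil\sqrt n\rceil$ random adjacency queries, and accept it into $D$ if the estimate clears the threshold $(1-\varepsilon)\gamma(n-1)$; the analysis shows each sample lands in the ``high-degree'' set $W$ (of size $\ge n^{2/3}\gamma$) with probability $\Omega(n^{-0.34})$, so the loop terminates after $n^{0.36}$ iterations w.h.p., giving runtime $O(n^{0.36}\sqrt n)=O(n^{0.86})=o(n)$. You instead draw a single batch $S$ of size $s=n^{o(1)}$, estimate every sampled vertex's degree with $t=\Theta(\lg n/(\gamma\delta^2))=n^{o(1)}$ queries, and take the top-$r$ by estimate, arguing (i) $|W_\delta|=n^{1-o(1)}\gg r$ so $S$ catches $\ge r$ members of $W_\delta$ w.h.p., and (ii) the two-sided Chernoff separation at threshold $(1-2\delta)\bar d$ guarantees the rank-$r$ selection returns only vertices of degree $\ge(1-4\delta)\bar d$. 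Both proofs rest on the same combinatorial ingredient—the near-average-degree set is polynomially large because $\gamma^{-1}=n^{o(1)}$—but your batch-then-rank organization avoids the rejection loop entirely, pushes the per-vertex query budget from $\sqrt n$ down to $n^{o(1)}$, and makes the overall runtime a deterministic $n^{o(1)}$ rather than $o(n)$ only w.h.p., so it is strictly stronger than what the lemma asks. Your ``fact (ii)'' ($d_r\ge\bar d-r$) is a correct sanity check but is not actually used in the final edge-count bound, which follows directly from every $v\in D$ having $\deg(v)\ge(1-4\delta)\bar d$ together with $r^2\le\delta\gamma rn$; and you should state whether $S$ is drawn with or without replacement (either works, with the $s^2/n=o(1)$ birthday-bound handling collisions in the with-replacement case), but these are cosmetic.
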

\begin{proof}
    Let $\varepsilon := 1/\sqrt[3]{\lg n}$. The algorithm proceeds as follows:
        
  \begin{mdframed}
      \begin{enumerate}
        \item Initialize $D := \varnothing$.
        \item Sample a vertex $v \in V$ uniformly at random. 
        \item Estimate $\deg(v)$ by sampling a set $U \subset V \setminus \{v\}$ of size $|U| = \ceil{\sqrt{n}}$ uniformly at random, and taking the estimate $\widehat{\deg(v)} := \frac{n-1}{|U|}|\{ u \in U : \{u, v\} \in E(G) \}|.$
        
        \item If $\widehat{\deg(v)} \geq (1-\varepsilon)\gamma(n-1)$, add $v$ to $D$, and otherwise go to 2.
        \item If $|D| = r$ output $D$, otherwise go to 2.
    \end{enumerate}
  \end{mdframed}

     Let $W := \{ v \in V(G) : \deg(v) \geq (1-n^{-1/3}) \gamma(n-1)\}$. 
    Now we argue that $W$ is relatively large. Indeed,
    \[
    \gamma n (n-1) = \sum_{v \in V(G)} \deg(v) \leq (n-1)|W| +  (n-|W|)(1-n^{-1/3})\gamma(n-1),
    \]
    from where dividing by $(n-1)$ and rearranging we get
    \[
    |W| \geq n \cdot \frac{\gamma - (1-n^{-1/3})\gamma}{1 - (1-n^{-1/3})\gamma} = \frac{n^{2/3} \gamma}{1-(1-n^{-1/3})\gamma} \geq n^{2/3} \gamma.
    \]
    Therefore, every time the algorithm enters step 2, the sampled vertex $v$ will belong to $W \setminus D$ with probability at least $\prob[v \in W] - \prob[v \in D] \geq \gamma/\sqrt[3]{n} - \frac{r}{n} = \Omega(n^{-0.34})$, and thus the expected number of samples required to have sampled $r$ distinct vertices from $W$ is at most $O(rn^{0.34}) = O(n^{0.35})$.  From this, if the algorithm enters step 2 at least $n^{0.36}$ times, then with high probability it will have sampled $r$ distinct vertices from $W$.

    To prove that the algorithm finishes correctly within the desired time, it suffices to prove two things:
    \begin{enumerate}
        \item[] (\textbf{Completeness}) With high probability, every sampled vertex from $W$ will pass the test of step 4, and thus be added to $D$. 
        \item[] (\textbf{Soundness}) With high probability, no sampled vertex that passes the test of step 4 has degree less than $(1-\varepsilon)^2\gamma (n-1)$.
    \end{enumerate}
    Note that completeness implies the runtime to be  $O(n^{0.36} \cdot \sqrt{n}) = o(n)$ w.h.p., since steps 3.--5. take time $O(\sqrt{n})$, and step 2 is entered at most $n^{0.36}$ times with high probability until $|D| = r$. On the other hand, soundness implies correctness of the algorithm, since if every vertex in $D$ has degree at least $(1-\varepsilon)^2\gamma (n-1)$, then we have
     \begin{align*}
    e(D, V\setminus D) &= \left(\sum_{v \in D} \deg(v)\right) - 2 e(D) \geq r(1-\varepsilon)^2\gamma (n-1) - r^2\\
    &\geq (1-\varepsilon)^2\gamma r(n-r) - r^2 = (1-\varepsilon)^2\gamma r(n-r) \cdot \left(1 - \frac{r^2}{(1-\varepsilon)^2 \gamma r(n-r)} \right)\\
    &\geq (1 - \lg^{-1/3}(n))^2\left(1-\frac{2r}{\gamma n}\right)\gamma r(n-r) \geq (1 - \lg^{-1/3}(n))^3\gamma r(n-r)\\ &= (1 - O( \lg^{-1/3}(n)))\gamma r(n-r).
    \end{align*}
    
    Now, note that $\E[\widehat{\deg(v)}] = \deg(v)$, and observe that by Hoeffding's inequality we have
    \begin{align*}
    \prob[|\widehat{\deg(v)} - \deg(v)| \geq \varepsilon(1-\varepsilon)\gamma(n-1)] &= \prob\left[\left|\widehat{\deg(v)}\frac{|U|}{n-1} - \deg(v)\frac{|U|}{n-1}\right| \geq \varepsilon(1-\varepsilon)\gamma |U|\right]\\
    &\le 2 \exp\left(\frac{-2(\varepsilon(1-\varepsilon)\gamma |U|)^2}{|U|}\right) \le \exp(-\Omega(\sqrt[3]{n})),
    \end{align*}
    where the last inequality used that $\gamma^{-1} = n^{o(1)}$ and also $\varepsilon^{-1} = n^{o(1)}$.

    Therefore,
    \begin{align*}
    \prob[\widehat{\deg(v)} \geq (1-\varepsilon)\gamma (n-1) \mid \deg(v) < (1-\varepsilon)^2\gamma(n-1)]
    &\leq   \prob[|\widehat{\deg(v)} - \deg(v)| \geq \varepsilon(1-\varepsilon)\gamma(n-1)]\\
    &\leq \exp(-\Omega(\sqrt[3]{n})),
    \end{align*}
    and thus, after a union bound, with high probability every vertex $v$ added to $D$ in step 3 of the algorithm has degree at least $(1-\varepsilon)^2\gamma(n-1)$, which proves soundness.

    Similarly, for completeness, 
    \begin{align*}
     \prob[\widehat{\deg(v)} < (1-\varepsilon)\gamma (n-1) \mid v \in W] &=
    \prob[\widehat{\deg(v)} < (1-\varepsilon)\gamma (n-1) \mid \deg(v) \ge (1-n^{-1/3})\gamma(n-1)]\\
    &\leq   \prob[|\widehat{\deg(v)} - \deg(v)| \geq (\varepsilon - n^{-1/3})\gamma(n-1)]\\
    &\leq \exp(-\Omega(\sqrt[3]{n})). \qedhere
    \end{align*}
\end{proof}

\randalg*
\begin{proof}
    Let $\varepsilon = \varepsilon(n)$ be an error term with $\varepsilon = \Theta(1/\sqrt[3]{\lg n})$, and let $r := \floor{\frac{\lg n - 2\lg\lg n + \lg(h_2(\gamma)) + \lg(\gamma)}{h_2((1-\varepsilon)^2 \gamma)}}$.  Note that $r \sim \frac{\lg n}{h_2(
        \gamma
    )}$ since $h_2((1-\varepsilon)^2\gamma) \sim h_2(\gamma)$.
    We first find a subset $D \subseteq V(G)$ such that $|D| = r$ and the bipartite graph $G' := G[D, V \setminus D]$ has at least $(1-\varepsilon) \gamma |D| \cdot |V \setminus D|$ edges. In the deterministic model with $O(1)$ adjacency queries, we choose $D$ to be the $r$ vertices of highest degree in $G$. By sorting the vertices by degree, $D$ can be computed in $O(n \lg n)$. We show in~\Cref{lemma:density-degrees-edges} that this guarantees at least $(1-\varepsilon) \gamma |D| \cdot |V \setminus D|$ edges. In the randomized setting, \Cref{lemma:density-alg} gives a randomized $O(n)$ algorithm for constructing such a set $D$ with high probability using random sampling.

    Now, let $V^\star = \{v \in V \setminus D : d(v)/|D| \ge (1-\varepsilon)^2 \gamma\}$, where $d(v)$ denotes the degree of $v$ in $G'$. We can now lower bound the size of $V^\star$ by noting that
    \[
  (1-\varepsilon) \gamma |D| \cdot |V \setminus D|  \leq |E(G')| = \sum_{v \in V\setminus D} d(v) \leq |D| \cdot |V^\star| + |D|(1-\varepsilon)^2\gamma \cdot (|V\setminus D| - |V^\star|),
    \]
    from where dividing by $|D|$ and rearranging we obtain
    \begin{align*}
        |V^\star| &\geq |V \setminus D| \cdot \frac{(1-\varepsilon)\gamma - (1-\varepsilon)^2\gamma}{1 - (1-\varepsilon)^2\gamma} = |V \setminus D| \cdot \frac{(1-\varepsilon) \varepsilon\gamma}{1 - (1-\varepsilon)^2\gamma}\\
         &\ge |V \setminus D| \cdot \varepsilon \cdot (1-\varepsilon) \gamma \sim |V\setminus D| \cdot \varepsilon \cdot \gamma \geq \frac{n \gamma}{\lg n}.
    \end{align*}

    Next, for each $v \in V^\star$, we compute an arbitrary subset  $f(v) \subseteq N(v) \cap D$ of size $\floor{(1-\varepsilon)^2 \gamma |D|}$. By the pigeonhole principle, there is a subset $B \subseteq V$ such that $f(v) = f(w)$ for all $v,w \in B$ and
    \[
        |B| \ge \frac{|V^\star|}{\binom{|D|}{\floor{(1-\varepsilon)^2 \gamma |D|}}} \ge \frac{|V^\star|}{2^{h_2((1-\varepsilon)^2 \gamma) |D|}} \ge \frac{|V^\star| \lg^2(n)}{n \gamma h_2(\gamma)} \ge (1-o(1))\frac{\lg n}{h_2(\gamma)}.
    \]
    Let $A := f(v)$ for any $v \in B$. Then, $(A,B)$ is clearly a biclique, and it remains to observe that
    \[
        |A| = \floor{(1-\varepsilon)^2 \gamma |D|} = (1-o(1)) \frac{\gamma}{h_2(\gamma)} \lg n,
    \]
    from where $\min\{|A|, |B|\} \geq (1-o(1)) \frac{\gamma}{h_2(\gamma)} \lg n$.

    After $D$ has been chosen, the above construction can be performed in $O(\tfrac{n \lg n}{h_2(\gamma)})$ time deterministically as we show next. We initialize a list $V^\star$, and for every vertex $v \in  V\setminus D$, as long as $|V^\star| \leq \frac{n\gamma}{\lg n}$, we compute its subset $f(v)$ in time $O(|D|)$ by simply iterating over each vertex $u \in D$ and adding $u$ to $f(v)$ if $\{u, v\} \in E(G)$, stopping when $|f(v)| = \lfloor(1-\varepsilon)^2\gamma |D|\rfloor$, in which case we add the pair $(v, f(v))$ to $V^\star$; this computation takes time $O(n \cdot |D|) = O(\tfrac{n \lg n}{h_2(\gamma)})$.
    At this point, it suffices to identify the $f(v^\star)$ that appears most frequently in $V^\star$, and output $(f(v^\star), \{ v : (v, f(v^\star)) \in V^\star\})$. To identify such an $f(v^\star)$, we can simply sort all $f(v)$ lists lexicographically, in time $O(|V^\star|  \lg|V^\star| \cdot |f(v)|) = O(n\lg n\frac{\gamma^2}{h_2(\gamma)})$, where the $|f(v)|$ factor is the cost of each comparison between lists, and then do a linear scan to find the most frequent.
\end{proof}

\label{sec:large_bb}

\section{Graphs with bounded shattering}\label{sec:shattering}
This section shows how~\Cref{thm:ep} can be strengthened for graphs in which the neighborhoods $\{N(v) : v \in V(G)\}$ form a structurally simple set-system, regardless of the edge density.

Given a set-system $\mathcal{F} \subseteq \mathcal{P}(X)$, define its \emph{shatter function} $\pi_{\mathcal{F}}: \{1, \dotsc, |X|\} \rightarrow \mathbb{N}$ by $\pi_{\mathcal{F}}(z) = \max_{X' \subseteq X, |X'| = z} | \{ X' \cap F : F \in \mathcal{F} \} |$.
Given a graph $G = (V,E)$, we define its shatter function $\pi_G$ as the shatter function of the \emph{neighborhood set-family} given by $\{N(v) : v \in V(G)\}$.

We are interested in graphs whose shatter function is polynomial.
For instance, graphs whose neighborhood set-family has VC-dimension at most $d$ satisfy $\pi_{G}(z) \leq \sum_{i=0}^d \binom{z}{i} = O(z^d)$, as a consequence of the Perles--Sauer--Shelah lemma~\cite{SAUER1972145, Shelah1972}.

An interesting subfamily of graphs with polynomial shatter functions is that of \emph{$d$-dimensional semi-algebraic graphs with bounded complexity} (see, e.g.~\cite[Section 1.2.4]{cardinal_et_al:LIPIcs.ESA.2025.67} for precise definitions).
Those graphs admit polynomial shatter functions, as a consequence of the Milnor--Thom theorem~\cite[Corollary 2.3]{Fox-Pach-Sheffer-Suk-Zahl-2017}.
This latter family includes many intersection graphs of geometric objects, e.g. graphs of intersections of disks in $\mathbb{R}^2$ (see e.g.~\cite{Fox-Pach-Sheffer-Suk-Zahl-2017, cardinal_et_al:LIPIcs.ESA.2025.67} for other examples).
Do~\cite{Do-2019} (see also~\cite[Theorem 5]{cardinal_et_al:LIPIcs.ESA.2025.67}) proved that $d$-dimensional semi-algebraic graphs with bounded complexity admit, for every $\varepsilon > 0$, biclique covers of weight $O_{d, \varepsilon}(n^{2 - 2/(d+1) + \varepsilon})$.
Moreover, those biclique covers can be found efficiently~\cite{Agarwal-Aronov-Ezra-Katz-Sharir-2025}.
A further subclass of graphs is that of \emph{$d$-dimensional semi-linear graphs with bounded complexity}~\cite[Section 3]{cardinal_et_al:LIPIcs.ESA.2025.67} which includes, for example, interval graphs, and intersection graphs of axis-parallel boxes in $\mathbb{R}^d$.
Cardinal and Yuditsky~\cite[Theorem 10]{cardinal_et_al:LIPIcs.ESA.2025.67} showed that semi-linear graphs satisfy $\cover(G) = \tilde{O}(n)$.
We remark that all those families contain instances of dense graphs (with quadratic number of edges).
Also, being semi-algebraic or semi-linear is a much stronger restriction than having a polynomial shatter function (see the discussion before Theorem 1.6 in~\cite{Fox-Pach-Suk-2019}).

In the next result we consider graphs with polynomial shatter function, and we can even bound $\lbp(G)$ in such graphs.
The argument emulates that of \Cref{thm:ep}, and it can also be implemented in deterministic $O(n^2)$-time as in \Cref{alg:ep} (with the straightforward modifications).

\begin{theorem} \label{lemma:boundedshatter}
    Let $G$ be an $n$-vertex graph with shatter function $\pi_G(z) \leq c z^d$, for some $c, d \ge 1$.
    Then $\lbp(G) = O(n^{1 - 1/(d+1)})$.
\end{theorem}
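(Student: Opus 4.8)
The plan is to follow the structure of the proof of \Cref{thm:ep}, but to choose the part size $\partSize$ differently and to bound the number of bicliques of the form $(S, A(S))$ in which a vertex $v$ participates by invoking the polynomial shatter bound rather than the trivial count $2^{\partSize}$. Recall that in \Cref{thm:ep} the bicliques across parts are indexed by a part $P_i$ and a subset $S \subseteq P_i$, and the load any given vertex $v \in P_i$ picks up from bicliques $(S, A(S))$ with $v \in S$ is bounded by the number of \emph{distinct traces} $\{N(v) \cap P_i : v \in V(G)\}$ on $P_i$ — which is at most $\pi_G(|P_i|) \le c\,\partSize^d$, not $2^{\partSize}$. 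Likewise the number of nonempty $S$ with $A(S) \neq \varnothing$ is at most $\pi_G(|P_i|)$. This is the one place the shattering hypothesis enters.

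Concretely: first I would partition $V(G)$ into $\lceil n/\partSize\rceil$ parts $P_1,\dots,P_{\lceil n/\partSize\rceil}$ of size at most $\partSize$, fix an almost-regular tournament $R$ on the parts, and form the biclique family
\[
\mathcal{C} := \{(S, A(S)) : S \subseteq P_i,\ A(S)\neq\varnothing,\ i \in [\lceil n/\partSize\rceil]\} \;\cup\; \{(\{u\},\{v\}) : \{u,v\}\in E(G),\ g(u)=g(v)\},
\]
with $A(S) = \{v \in V : N(v)\cap P_{i}=S \text{ and } R(g(v),i)\}$ exactly as before. The verification that $\mathcal{C}$ is a biclique partition is word-for-word the argument in \Cref{thm:ep} and uses nothing about shattering. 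For the load of a fixed vertex $v\in P_i$ there are again three contributions: (a) at most $|P_i|-1 \le \partSize$ bicliques $(\{u\},\{v\})$ within the part; (b) bicliques $(S,A(S))$ with $v\in S$: here $S = N(u)\cap P_i$ for some $u\in A(S)$, so the number of such $S$ is at most the number of distinct sets $N(u)\cap P_i$, i.e.\ at most $\pi_G(\partSize) \le c\,\partSize^{d}$; (c) bicliques $(S,A(S))$ with $v\in A(S)$: by almost-regularity of $R$ there are at most $\lceil \tfrac{n}{2\partSize}\rceil$ indices $j$ with $R(g(v),j)$, and for each such $j$ the vertex $v$ lies in exactly one set $A(S)$ (namely $S=N(v)\cap P_j$), so this contributes at most $\lceil n/(2\partSize)\rceil$.

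Adding these up, the load is $O\!\big(\partSize + \partSize^{d} + n/\partSize\big) = O\!\big(\partSize^{d} + n/\partSize\big)$ (using $d\ge 1$). To balance the two dominant terms I would set $\partSize := \lceil n^{1/(d+1)} \rceil$, which gives $\partSize^{d} = O(n^{d/(d+1)}) = O(n^{1-1/(d+1)})$ and $n/\partSize = O(n^{1-1/(d+1)})$, hence $\lbp(G) = O(n^{1-1/(d+1)})$ as claimed; the constant absorbs $c$. The deterministic $O(n^2)$ implementation is the same as \Cref{alg:ep} with this new value of $\partSize$ (the inner loops still run in $O(n\cdot\partSize) = O(n^2)$ total, and one simply skips emitting $(S,A(S))$ when $A(S)=\varnothing$). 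I do not expect a genuine obstacle here; the only point requiring a little care is making sure the trace-count bound $\pi_G(|P_i|)$ is applied to the \emph{right} set system — it is the neighborhoods $\{N(u):u\in V(G)\}$ restricted to $P_i$, which is exactly what the definition of $\pi_G$ controls — and checking that $\pi_G$ being nondecreasing (or at least bounded by $c\,\partSize^d$ for the actual, possibly smaller, part sizes) lets us use $\partSize$ uniformly; both are immediate from the hypothesis $\pi_G(z)\le c z^d$.
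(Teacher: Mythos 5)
Your proposal is correct and follows essentially the same route as the paper's own proof: partition into parts of size roughly $n^{1/(d+1)}$, fix a tournament on the parts, form the bicliques $(S, A(S))$, and replace the trivial $2^{\partSize}$ bound on distinct traces with $\pi_G(\partSize) \le c\,\partSize^d$, balancing $\partSize^d$ against $n/\partSize$. The paper uses an arbitrary tournament and thus bounds the third contribution by $t = \lceil n/\partSize\rceil$ rather than your $\lceil n/(2\partSize)\rceil$, but this affects only the hidden constant.
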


\begin{proof}
    Let $\partSize = \lfloor n^{1/(d+1)} \rfloor$.
    Divide $V(G)$ into $t := \lceil n/\partSize \rceil$ sets $V_1, \dotsc, V_t$ of size as close as possible, and at most $\partSize$.
    Let $D$ be any tournament on $\{1, \dotsc, t\}$.
    The edges completely contained inside some $V_i$ form a biclique partition $\mathcal{B}_0$.
    For each $V_i$, consider the set $N_i = \bigcup_{j \in N_D(i)} V_j$.
    For each $S \subseteq V_i$, let $A_i(S) = \{ v \in N_i : N_G(v) \cap V_i = S\}$.
    There are most $\pi_G(\partSize)$ sets $S$ for which $A_i(S)$ is non-empty.
    Considering $(S, A_i(S))$ for those sets, we get a biclique partition $\mathcal{B}_i$ of the bipartite graph between $V_i$ and $N_i$.
    By construction the union over all $\mathcal{B}_i$ is a biclique partition of $G$.

    Now we estimate, for an arbitrary $1 \leq i \leq t$ and $v \in V_i$, how many bicliques contain $v$.
    Clearly $v$ is contained in at most $\partSize$ bicliques of $\mathcal{B}_0$.
    The bicliques from $\mathcal{B}_i$ contribute with at most $\pi_G(\partSize)$; and for $j \neq i$ the vertex $v$ can be in at most one biclique of $\mathcal{B}_j$.
    In total, we get $\partSize + \pi_G(\partSize) + t$.
    From the choice of $\partSize$, this is at most
    $O(n^{1 - 1/(d+1)})$, as desired.
\end{proof}

Observe that \Cref{lemma:boundedshatter} combined with \Cref{lemma:e-bp} immediately shows that every $n$-vertex $K_{t,t}$-free graph $G$ with $\pi_G(z) \leq c z^d$ can have at most $O(t n^{2 - 1/(d+1)})$ edges.
Fox, Pach, Sheffer, Suk, and Zahl~\cite[Theorem 2.1]{Fox-Pach-Sheffer-Suk-Zahl-2017} showed that such graphs can have in fact at most $O_t(n^{2 - 1/d})$ edges, recently improved by Janzer and Pohoata~\cite{Janzer-Pohoata-2024} to $o_t(n^{2 - 1/d})$ edges if $k \geq d > 2$.

\section{Dense random graphs}\label{sec:random_graphs}
Chung, Erd\H{o}s and Spencer~\cite{chungDecompositionGraphsComplete1983} also asked for the average of $\cover(G) $ and $\totalWeight(G)$ over all $n$-vertex graphs.
Phrased in probabilistic terms, they asked for the expected value of $\cover(G)$ and $\totalWeight(G)$ when $G$ is drawn according to $G(n,p)$ with $p=1/2$ (recall that $G(n,p)$ is the random binomial graph on $n$ vertices where each edge is included independently with probability $p$).
We answer their question in a more general form by considering all $p \in (0,1)$.
We also consider the uniform random graph model $G(n, m)$, which is the model of random graphs chosen uniformly from $\mathcal{G}(n,m)$, the set of $n$-vertex graphs with $m$ edges.
In fact, we can also determine the `typical' value of $\cover(G)$ and $\totalWeight(G)$ for a dense random graph; from which the results on the expected value follow easily.

At a high level, we can get upper bounds in the $G(n, m)$ model immediately since there the edge density is deterministic, and thus~\Cref{thm:gamma-2} applies directly.
The lower bounds are based on noting that the information-theoretic lower bounds also apply on average.

\begin{theorem}\label{thm:dense-random-graphs}
    Let $p \in (0,1)$ be fixed.
    Given $n$, let $m = m(n) =  \lfloor p \binom{n}{2} \rfloor$.
    Let $G$ be a random graph chosen either according to $G(n,m)$ or $G(n,p)$. Then, with probability tending to $1$ as $n$ goes to infinity, we have $\cover(G) \sim \totalWeight(G) \sim \frac{h_2(p)}{2} \cdot \frac{n^2}{\lg n}$. 
\end{theorem}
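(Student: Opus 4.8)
The plan is to prove matching upper and lower bounds, both holding with probability $1-o(1)$, and then note that the same argument applies to the random variables $\cover(G)$ and $\totalWeight(G)$ separately via the sandwich $\cover(G) \le \totalWeight(G)$. For the \emph{upper bound}, I would handle the two models in a unified way by conditioning. In the $G(n,m)$ model the edge count is exactly $m = \lfloor p \binom{n}{2} \rfloor$, so the edge density is $\gamma = p + o(1)$ deterministically; since $p$ is a fixed constant, $\max\{\gamma^{-1},(1-\gamma)^{-1}\} = O(1) = n^{o(1)}$, so \Cref{thm:gamma-2} applies verbatim and gives $\totalWeight(G) \le (1+o(1)) \frac{h_2(p)}{2}\frac{n^2}{\lg n}$ with probability $1$ (using $h_2(\gamma) \sim h_2(p)$ by continuity of $h_2$). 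For the $G(n,p)$ model, a standard Chernoff bound shows $|E(G)| = p\binom{n}{2} \pm o(n^2)$ with probability $1-o(1)$, hence the edge density is $p \pm o(1)$ with high probability; on this event we again invoke \Cref{thm:gamma-2} and continuity of $h_2$ to conclude $\totalWeight(G) \le (1+o(1))\frac{h_2(p)}{2}\frac{n^2}{\lg n}$ with high probability.

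For the \emph{lower bound}, the idea is that the information-theoretic argument of \Cref{thm:lower-bound-cover} applies on average, not just in the worst case. Concretely, using the injective encoding $f$ from the proof of \Cref{thm:lower-bound-cover}, every $n$-vertex graph $G$ with biclique cover of weight $w$ is encoded by a binary string of length at most $w(\lg n + O(1))$. If a large fraction of graphs drawn from the relevant model had $\cover(G) \le (1-\varepsilon)\frac{h_2(p)}{2}\frac{n^2}{\lg n}$ for some fixed $\varepsilon > 0$, then a large fraction of graphs would be encoded by strings of length at most $(1-\varepsilon)\frac{h_2(p)}{2}n^2 (1+o(1))$, of which there are at most $2^{(1-\varepsilon)\frac{h_2(p)}{2}n^2(1+o(1))}$. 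In the $G(n,m)$ model, the relevant set is $\mathcal{G}(n,m)$ with $\lg|\mathcal{G}(n,m)| = \lg\binom{\binom{n}{2}}{m} \sim \frac{h_2(p)}{2}n^2$ by \Cref{remark:entropy}; in the $G(n,p)$ model I would restrict attention to the set of graphs with $|E(G)| = p\binom{n}{2} \pm o(n^2)$, which carries all but $o(1)$ of the probability mass and still has cardinality $2^{(1-o(1))\frac{h_2(p)}{2}n^2}$ (again by \Cref{remark:entropy}, summing over the relevant range of $m$). A simple counting/pigeonhole argument then shows that the event $\cover(G) \le (1-\varepsilon)\frac{h_2(p)}{2}\frac{n^2}{\lg n}$ can have probability at most $2^{-\Omega(n^2)} = o(1)$, since the number of short codewords is exponentially smaller than the number of (typical) graphs. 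Taking a union over a sequence $\varepsilon = \varepsilon_n \to 0$ slowly (or just noting the bound holds for every fixed $\varepsilon$ and that $o(1)$ is all we need) gives $\cover(G) \ge (1-o(1))\frac{h_2(p)}{2}\frac{n^2}{\lg n}$ with probability $1-o(1)$.

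Combining the two directions with the trivial inequality $\cover(G) \le \totalWeight(G)$ yields $\cover(G) \sim \totalWeight(G) \sim \frac{h_2(p)}{2}\frac{n^2}{\lg n}$ with probability $1-o(1)$ in both models, which is the claim. The only mildly delicate point — and the step I expect to require the most care — is the lower bound in the $G(n,p)$ model: one must be careful that conditioning on the (high-probability) event that the edge count is close to $p\binom{n}{2}$ does not distort the counting, i.e. that $\Pr_{G(n,p)}[\cover(G) \text{ small}]$ is genuinely bounded by (number of short codewords for typical edge counts) divided by (a $1-o(1)$ fraction of the total mass), and that this ratio is $2^{-\Omega(n^2)}$. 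This is straightforward once one writes $\Pr_{G(n,p)}[\cover(G)\le w] \le \Pr[|E(G)| \notin p\binom n2 \pm o(n^2)] + \sum_{m} \Pr[|E(G)|=m]\cdot \Pr_{G(n,m)}[\cover(G)\le w]$ and applies the $G(n,m)$ bound term by term, but it is the place where the two models genuinely interact and deserves to be spelled out.
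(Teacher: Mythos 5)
Your proposal is correct and follows essentially the same route as the paper: a deterministic upper bound via \Cref{thm:gamma-2}, and an information-theoretic lower bound via the injective biclique-cover encoding of \Cref{thm:lower-bound-cover}. The only real difference is in the $G(n,p)$--to--$G(n,m)$ transfer: the paper invokes a standard coupling result as a black box and proves everything in the $G(n,m)$ model, whereas you carry out the transfer by hand, conditioning on the edge count and decomposing $\prob_{G(n,p)}[\cover(G)\le w]$ as a mixture of $\prob_{G(n,m)}[\cover(G)\le w]$ over typical $m$; both are valid, yours is a bit more self-contained and the paper's is shorter. One small caution on presentation: your middle paragraph ("the number of short codewords is exponentially smaller than the number of typical graphs") reads as though $G(n,p)$ were uniform over the typical graphs, which it is not; your final paragraph already fixes this by decomposing over $m$ and applying the uniform counting bound term by term, so it would be cleaner to lead with that decomposition rather than appeal to it as an afterthought.
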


\begin{proof}
    It suffices to prove the result for $G(n,m)$, as then the result is transferred automatically to the $G(n,p)$ model by the standard couplings between the two models, see e.g.~\cite[Theorem 1.4]{Frieze-Karonski-2023}.
    Let $G$ be drawn uniformly from $\mathcal{G}(n,m)$.
    Note that the bound $\totalWeight(G) \leq (h_2(p)/2 + o(1)) n^2/\lg n$ follows deterministically from \Cref{thm:gamma-2}.
    
    We need to prove that, with probability $1 - o(1)$, we also have $\cover(G) \geq (h_2(p)/2 - o(1)) n^2/\lg n$.
    We know that each biclique cover of a graph $G$ with weight $\cover(G)$ gives an encoding $f(G) \in \{0,1\}^\star$ of $G$ using at most $\cover(G)(\lg n + O(1))$ bits.
    Since the encoding $f: \mathcal{G}(n,m) \rightarrow \{0,1\}^\star$ is injective, the set $\Gamma \subseteq \{0,1\}^\star$ of strings used by the encoding satisfies $|\Gamma| \geq |\mathcal{G}(n,m)|$.
    The number of strings in $\{0,1\}^\star$ of length at most $k$ is $2^{k+1} - 1$.
    Hence, if $k$ is such that $k = \lg |\Gamma| - \omega(1)$, then with high probability we have $\cover(G) \geq k / (\lg n + O(1))$.
    By the argument outlined at the beginning of \Cref{sec:nechiporuk} we have that $\lg |\Gamma| = (h_2(\gamma) + o(1))\binom{n}{2}$, so we can take $k = (h_2(\gamma) - o(1))\binom{n}{2}$ as well.
    This gives that with high probability we have $\cover(G) \geq (h_2(\gamma)/2 - o(1)) n^2/\lg n$, as desired.
\end{proof}

\begin{corollary}
    Let $G$ be a random graph as in \Cref{thm:dense-random-graphs}.
    Then 
    $\E[\cover(G)] \sim \E[\totalWeight(G)] \sim \frac{h_2(p)}{2} \cdot \frac{n^2}{\lg n}$.
\end{corollary}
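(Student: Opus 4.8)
The plan is to deduce the statement about expected values from the high-probability concentration result of \Cref{thm:dense-random-graphs} together with deterministic worst-case bounds that control the contribution of the low-probability ``bad'' event. Write $W := \totalWeight(G)$ (the argument for $\cover(G)$ is identical, using $\cover(G) \le \totalWeight(G)$ for the upper bound and the information-theoretic lower bound for the lower bound). By \Cref{thm:dense-random-graphs}, there is a sequence $\varepsilon_n \to 0$ such that the event $\mathcal{E}_n := \{ (1-\varepsilon_n)\frac{h_2(p)}{2}\frac{n^2}{\lg n} \le W \le (1+\varepsilon_n)\frac{h_2(p)}{2}\frac{n^2}{\lg n} \}$ has $\prob[\mathcal{E}_n] \to 1$; set $\delta_n := \prob[\overline{\mathcal{E}_n}] \to 0$.

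First I would establish the lower bound on $\E[W]$, which is the easy direction: since $W \ge 0$ always,
\[
    \E[W] \ge \E[W \cdot \mathbb{1}_{\mathcal{E}_n}] \ge (1-\varepsilon_n)\frac{h_2(p)}{2}\frac{n^2}{\lg n} \cdot \prob[\mathcal{E}_n] = (1 - o(1))\frac{h_2(p)}{2}\frac{n^2}{\lg n}.
\]
For the upper bound, the key point is that $W$ is \emph{deterministically} bounded: every $n$-vertex graph satisfies $\totalWeight(G) \le \totalWeight(n) \le (1+o(1))\frac{n^2}{\lg n}$ by \Cref{thm-ces-optimal} (indeed \Cref{thm:ep} already gives this). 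So write
\[
    \E[W] = \E[W \cdot \mathbb{1}_{\mathcal{E}_n}] + \E[W \cdot \mathbb{1}_{\overline{\mathcal{E}_n}}] \le (1+\varepsilon_n)\frac{h_2(p)}{2}\frac{n^2}{\lg n} + \totalWeight(n)\cdot\delta_n,
\]
and since $\totalWeight(n) = O(n^2/\lg n)$ while $\delta_n \to 0$, the second term is $o(n^2/\lg n)$. Combining the two bounds gives $\E[W] \sim \frac{h_2(p)}{2}\frac{n^2}{\lg n}$, and the same two-sided estimate applied to $\cover(G)$ (using $\cover(G) \le \totalWeight(G) \le \totalWeight(n)$ as the deterministic upper bound and the event $\mathcal{E}_n$ which by \Cref{thm:dense-random-graphs} also controls $\cover(G)$ from below) finishes the proof.

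There is essentially no main obstacle here: the content is entirely in \Cref{thm:dense-random-graphs}, and the corollary is the routine ``convergence in probability plus uniform boundedness implies convergence of expectations'' argument, with the uniform bound supplied by the deterministic worst-case estimate $\totalWeight(n) = O(n^2/\lg n)$. The only thing to be mildly careful about is that one should invoke \Cref{thm:dense-random-graphs} for \emph{both} $\cover(G)$ and $\totalWeight(G)$ simultaneously on the same event, which is exactly how that theorem is stated, so no extra work is needed.
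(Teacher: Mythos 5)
Your proof is correct and takes essentially the same approach as the paper: the lower bound is the identical observation $\E[W] \ge \E[W\cdot\mathbb{1}_{\mathcal{E}_n}]$ with the high-probability event from \Cref{thm:dense-random-graphs}, and the upper bound controls the bad event via a deterministic worst-case estimate. The only minor divergence is in the upper bound: for $G(n,m)$ the paper notes that the edge density is deterministically $p - o(1)$, so the sharper density-aware bound $\totalWeight(G)\le (h_2(p)/2+o(1))n^2/\lg n$ from \Cref{thm:gamma-2} holds for \emph{every} graph in the sample space, making the expectation bound immediate with no error term; you instead use the cruder worst-case bound $\totalWeight(n)=O(n^2/\lg n)$ together with $\delta_n\to 0$. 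Your version is slightly more robust because it applies verbatim to $G(n,p)$ as well (where the density bound is not deterministic), which is a point the paper handles somewhat tersely; so both are fine. One small slip worth noting: you attribute $\totalWeight(n)\le (1+o(1))n^2/\lg n$ to \Cref{thm-ces-optimal}, but that theorem in fact gives the sharper $(\tfrac12+o(1))n^2/\lg n$ --- immaterial here, since you only use $O(n^2/\lg n)$.
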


\begin{proof}
    We consider $G \sim \mathcal{G}(n,m)$ in the uniform model first.
    Recall that $\totalWeight(G) \leq (h_2(p)/2 + o(1)) n^2/\lg n$ holds deterministically, so in particular $\E[\totalWeight(G)] \leq (h_2(p)/2 + o(1)) n^2/\lg n$ as well.
    Let $k = (h_2(p)/2 - o(1)) n^2/\lg n$. Using \Cref{thm:dense-random-graphs}, we have \[  \E[\cover(G)] \geq k \prob[\cover(G) \geq k] = k (1 - o(1)), \]
    which gives the desired lower bound.
    The proofs for $G$ drawn from the binomial model $G(n,p)$ follow along the same lines, by noting that $|E(G)| = p \binom{n}{2} + o(n^2)$ with high probability (by, e.g. Chernoff inequalities).
\end{proof}

\section{Related work and applications}

Biclique covers and partitions arise naturally in many areas of computer science, including the aforementioned applications to cryptography and computational geometry. We outline several more examples next.

Arguably, one of the most impactful applications of biclique partitions is to graph compression, kick-started by the seminal work of Feder and Motwani~\cite{feder-motwani}, which has been followed by empirically validated applications and theoretical improvements~\cite{chavan2025,Hernndez2013, Francisco2022}. 

A different line of work in which biclique coverings and partitions appear often is in circuit complexity (e.g., the \emph{star complexity} of a graph), where these decompositions allow for constructing smaller circuits for graph-related problems~\cite{juknaGraphComplexity,Jukna2012}. Biclique decompositions have been used to prove an upper bound on the size of monotone Boolean formulas for quadratic functions~\cite{Bublitz1986} and to a minimization problem for Horn formulas~\cite{horn}. Similarly, biclique coverings allow for reducing the size of CNF formulas in both graph and scheduling problems~\cite{subercaseaux2025asymptoticallysmallerencodingsgraph}. 

Partitioning a graph $G$ into bicliques $B_1 \sqcup B_2 \sqcup \ldots \sqcup B_k$ can also be understood from the adjacency matrix perspective, since then the adjacency matrix of $G$ can be written as a sum of the adjacency matrices of the graphs $B_i$, appropriately padded with $0$s. Due to the structure of adjacency matrices of bicliques, several linear-algebraic operations can be performed more efficiently over them, which has further motivated the study of biclique decompositions~\cite{Tuza1984, Jukna2013, Francisco2022}.

The study of biclique decompositions has also arisen naturally in other areas of computer science  such as automata theory, where it allowed Iv\'an et al. to prove a separation result for the minimal number of states of nondeterministic finite automata (NFAs) using $\varepsilon$-transitions versus $\varepsilon$-free NFAs~\cite{ivan2014bicliquecoveringsrectifiernetworks}.

The Erd\H{o}s--Pyber theorem has been used to give a bound on the \emph{local dimension} of posets \cite{posets-1,posets-2}, and our results improve the constant factor in that bound (see \cite[Theorem~2]{posets-1}).


More generally, our work is part of a much broader field of research in discrete mathematics: \emph{graph decompositions}. See \cite{graph-covering-survey} for a recent survey on graph decomposition and \cite{clique-biclique-survey} for a survey about clique and biclique decompositions specifically.

\section{Further directions}\label{sec:conclusion}
We have made significant progress in understanding the landscape of partite decompositions for graphs and uniform hypergraphs, showing tight results for both the total weight (Chung--Erd\H{o}s--Spencer style) and for the maximum number of $d$-cliques  each vertex belongs to (Erd\H{o}s--Pyber style). Nonetheless, several avenues for future work remain open:
\begin{itemize}
    \item We have shown how considering the edge density of graphs allows for better bounds when the density is bounded away from $1/2$ (\Cref{thm:gamma-2}). An interesting direction of future work is to study analogous density-aware results for $d$-uniform hypergraphs, with edge density $\gamma_d := |E(G)|/\binom{n}{d}$.
    
    \item We have shown in~\Cref{sec:large_bb} how our techniques allow for efficiently finding large balanced bicliques in graphs. An interesting line of research is whether our results for hypergraphs can help find large balanced $d$-cliques in dense $d$-uniform hypergraphs. Concretely, $d$-cliques with each part having size $\Omega( (\lg n)^{1/(d-1)})$ are ensured by \cite{Erdos-1964} (the hypergraph version of KST), and the best algorithmic results are given by a recent result of Espuña \cite{Espuna-2025}. In contrast, our hypergraph decomposition results (\Cref{theorem:ces-hypergraph,theorem:ep-hypergraph-upper}) yield very unbalanced $d$-cliques, in which all but $2$ parts have size $1$. Thus, an interesting question is whether similar decomposition results can be obtained with more balanced $d$-cliques.
    \item From the `representation' perspective presented in~\Cref{sec:representations}, an interesting question is whether it is possible to efficiently support dynamic graphs, handling both edge insertions and deletions, as well as vertex insertions and deletions. The lazy deletion operation used for the densest subgraph approximation might be helpful as a starting point. In general terms, a more complete analysis of the complexity of common graph queries and operations under these biclique representations is a natural direction of future research.
    \item Csirmaz, Ligeti, and Tardos~\cite{ep-hypergraph} considered \emph{fractional} coverings/partitions, in which each $d$-clique $D_1, \ldots, D_k$ of the input hypergraph $H$ must be assigned some non-negative weight $w(D_i)$ and then for each hyperedge $e$ one must have 
    \(
    \sum_{D_i \text{ contains } e} w(D_i) = 1
    \)
    in the case of partitions, and 
    \(
        \sum_{D_i \text{ contains } e} w(D_i) \geq 1
    \)
    in the case of coverings. 
    In this setting, the load of a vertex is the sum of the weights of the $d$-cliques it belongs to, and one defines
    \(
    \lmp^\ast_d(H)
    \) (resp.~$\lmc^\ast_d(H)$)  as the minimum over the fractional partitions (resp covers) of the maximum load over the vertices of $H$. Then, naturally, \(
    \lmp^\ast_d(n)
    \) is defined as the maximum \(
    \lmp^\ast_d(H)
    \) over all $n$-vertex hypergraphs $H$, and $\lmc^\ast_d(H)$ is defined analogously. Through the probabilistic method, Csirmaz, Ligeti, and Tardos proved the lower bound
    \[
    \lmp^\ast_d(n) \geq   \left(\frac{0.53}{d!} - o_d(1)\right) \frac{n^{d-1}}{\lg n}.
    \]
    It would be interesting to know the \emph{integrality ratio} 
    \[
    \lim_{n \to \infty} \frac{\lmp_d(n)}{\lmp^\ast_d(n)}
    \]
    if it exists. Our~\Cref{theorem:ep-hypergraph-upper} combined with the lower bound of~\cite{ep-hypergraph} shows  \[ 1 \leq \lim_{n \to \infty} \frac{\lmp_d(n)}{\lmp^\ast_d(n)} \leq \frac{1}{0.53} < 1.89.\]

    It is worth noting the following theorem of Csirmaz, Ligeti, and Tardos:
    \begin{theorem}[{\cite[Thm. 4]{ep-hypergraph}}]
        Let $p \in (0, 1)$ and $G$ be an $n$-vertex graph in which every vertex $v$ satisfies $\frac{\deg(v)}{n} \geq p$. Then,
        \[
        \lmc^\ast_2(n) \leq \left(\frac{(1-p)}{2 \ln 2} + o(1)\right)\frac{n}{\lg n}.
        \]
    \end{theorem}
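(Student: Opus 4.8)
The plan is to establish the bound by a probabilistic construction of a \emph{fractional} biclique cover, following the same philosophy as the proof of \Cref{thm:ep} but with a pattern distribution tuned to the minimum-degree hypothesis. First I would reduce the statement to producing a single well-behaved random biclique: if $\mathbf B$ is a random biclique of $G$ such that $\Pr[\mathbf B\text{ covers }e]\ge\rho$ for every edge $e$ and $\Pr[v\in V(\mathbf B)]\le L$ for every vertex $v$, then assigning each biclique $B$ the weight $\Pr[\mathbf B=B]/\rho$ gives a fractional cover in which every vertex has load $\Pr[v\in V(\mathbf B)]/\rho\le L/\rho$; hence $\lmc^\ast_2(G)\le L/\rho$, and it suffices to exhibit such a $\mathbf B$ with $L/\rho\le\big(\tfrac{1-p}{2\ln 2}+o(1)\big)\tfrac n{\lg n}$.

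The random biclique will be of the ``trace'' type. Pick a uniformly random $k$-subset $W\subseteq V$ with $k=\lg n-\Theta(\lg\lg n)$; conditioned on $W$, pick a pattern $Z\subseteq W$ from a carefully chosen distribution and output the biclique $(Z,\{v\notin W:N(v)\cap W=Z\})$ (or a slight variant that uses only part of $Z$ as one side, to balance the two load contributions). The role of the degree hypothesis is that for $v\notin W$ the trace $N(v)\cap W$ behaves like a hypergeometric variable with mean $\deg(v)\,k/(n-1)\ge(p-o(1))k$, so the patterns that actually occur concentrate in the size window $[(p-o(1))k,\;k]$; taking the law of $Z$ to be a mixture over sizes essentially supported on this window of width $(1-p)k$ — rather than the uniform law over all $2^k$ subsets — amplifies the probability of matching a prescribed trace while keeping $\Pr[v\in V(\mathbf B)]$ bounded \emph{uniformly in $\deg(v)$}. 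An equivalent viewpoint is to pass to the complement $\overline G$, in which every vertex has degree at most $(1-p)n$, and to cover the non-edges of $\overline G$ by ``biholes''; the governing density parameter is then $1-p$, which is where the factor $(1-p)$ ultimately comes from.

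The heart of the argument is then an optimisation of the two free parameters — the size $k$ of $W$ and the size-distribution $q=(q_\ell)$ of $Z$ — so as to minimise $L/\rho$. Expanding $\rho$ and $L$ as sums over pattern sizes weighted by inverse binomial coefficients $\binom k\ell^{-1}$, the leading-order trade-off is between a ``reach'' term of order $n/k$ (the analogue of the $\tfrac n{2\lg n}$ term in \Cref{thm:ep}) and a ``pattern'' term that is exponential in $k$, and I expect the constant $\tfrac{(1-p)\lg e}{2}=\tfrac{1-p}{2\ln 2}$ to fall out of the resulting extremal problem in the same way that the constant $\tfrac{\lg e}{2}$ of the Chung--Erd\H{o}s--Spencer theorem arises from $\int_0^1\lg(1/t)\,dt=\lg e$, the relevant integral now being taken over the scale window of width $1-p$. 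One should also check that the lower-order terms (bicliques entirely inside $W$, and the small-$o$ slack in all the concentration estimates) are negligible, exactly as in \Cref{thm:ep}.

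The main obstacle I anticipate is that the minimum-degree hypothesis controls the trace sizes only \emph{on average over $W$} — for $k=\Theta(\lg n)$ these variables have standard deviation of order $\sqrt{\lg n}$, which is too large for a union bound over all $n$ vertices — so the entire analysis must be carried out in terms of expectations over $W$ and $Z$ rather than by conditioning on a ``typical'' $W$, and the size-distribution $q$ must be chosen so that the coverage probability is bounded below for \emph{every} admissible degree value $\delta\in[p,1]$ simultaneously (in particular for edges between two vertices of very high degree, where a single fixed pattern size would fail). Pinning down the exact constant, rather than settling for an $O\!\big(\tfrac{(1-p)n}{\lg n}\big)$ bound, is where the real work lies; once the extremal problem for $(k,q)$ is solved, the rest is routine.
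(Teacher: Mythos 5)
This theorem is not proved in the paper: it is quoted from Csirmaz, Ligeti, and Tardos \cite{ep-hypergraph} and appears in \Cref{sec:conclusion} only as an ingredient in a remark about integrality ratios. There is therefore no in-house proof to compare your sketch against, and I can only assess the proposal on its own terms.

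Your reduction --- a random biclique $\mathbf B$ with $\Pr[\mathbf B\supseteq e]\ge\rho$ for every edge and $\Pr[v\in V(\mathbf B)]\le L$ for every vertex yields a fractional cover of maximum load $L/\rho$ --- is correct, and trace bicliques $(Z,\{v\notin W: N(v)\cap W=Z\})$ on a random window $W$ of size $k=\Theta(\lg n)$ are the natural object. The genuine gap is at the step you defer. The claim that the constant $\tfrac{1-p}{2\ln 2}=\tfrac{(1-p)\lg e}{2}$ ``falls out'' from an optimisation over $(k,q)$ in analogy with $\int_0^1\lg(1/t)\,dt=\lg e$ is not backed by a calculation, and the natural candidate laws for $Z$ do not obviously yield a linear-in-$(1-p)$ gain. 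If $Z$ is drawn uniformly over \emph{sizes} in the window $[(p-o(1))k,\,k]$ (each size receiving weight $\sim\tfrac{1}{(1-p)k}$), then $\Pr[Z=T]\approx\tfrac{1}{(1-p)k\binom{k}{|T|}}$ and $L/\rho$ degenerates: for a vertex whose trace size is near $k$ the trace-side membership probability is $\approx\tfrac{1}{(1-p)k}$, while the worst-case coverage probability is controlled by a vertex with trace size near $k/2$, so $L/\rho=\Omega(n2^k/k)$. If instead $Z$ is uniform over all \emph{subsets} of size $\ge(p-o(1))k$ (weights $\propto\binom{k}{\ell}$ within the window), the matching probability is amplified over the unrestricted baseline $2^{-k}$ only by a factor $\approx 2^k\big/\sum_{\ell\ge pk}\binom{k}{\ell}$, which is $\Theta(1)$ for $p\le\tfrac12$ and $2^{(1-h_2(p))k+o(k)}$ for $p>\tfrac12$; neither regime gives a factor $1-p$. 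The CES constant $\tfrac{\lg e}{2}$ comes from a greedy-peeling argument in which the extracted biclique size degrades with the residual density, and there is no automatic transfer of that integral to a fixed degree floor. Your own concern about hypergeometric concentration at scale $k=\Theta(\lg n)$ is also legitimate and is not dispatched by ``working in expectation'': the constraint $\Pr[\mathbf B\supseteq e]\ge\rho$ must hold for \emph{every} edge, so restricting the support of the pattern law to a narrow size window requires per-vertex control of trace sizes that survives a union bound over $n$ vertices. As written this is a plausible programme, not a proof --- the step that produces the constant is exactly the one that is missing.
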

    Together with our~\Cref{thm:dense-random-graphs}, this directly shows a lower bound on the integrality ratio for random graphs; \Cref{thm:dense-random-graphs} implies that with high probability, a random graph $G \sim G(n, p)$ satisfies $\lmc_2(G) \geq (\frac{h_2(p)}2 - o(1))\frac{n}{\lg n}$, while simultaneously satisfying $\frac{\deg(v)}{n} \geq p - o(1)$ for every vertex with high probability.

\end{itemize}

\section*{Acknowledgments}

Krapivin was supported by the Jeanne B. and Richard F. Berdik ARCS Pittsburgh Endowed Scholar Award. Przybocki was supported by the NSF Graduate Research Fellowship Program under Grant No. DGE-2140739. Sanhueza-Matamala was supported by ANID-FONDECYT Regular Nº1251121 grant. Subercaseaux was supported by NSF grant DMS-2434625.

\bibliographystyle{alpha}
\bibliography{references}

\newpage

\appendix
\section{Appendix}\label{sec:appendix}

\begin{proof}[Proof of~\Cref{claim:properties}]


For the first item, note that $r_d$ can be written as
\[
r_d(n) = \frac{4^d}{d!}n^{d-1} \cdot a(n) \cdot b(d),
\]
where $b$ is an increasing function in $d$, and thus
\[
r_{d}(n) = \frac{4n}{d} \cdot \frac{4^{d-1}}{(d-1)!} \cdot a(n) \cdot b(d) \geq \frac{4n}{d} \cdot \frac{4^{d-1}}{(d-1)!} \cdot a(n) \cdot b(d-1) = \frac{4n}{d} r_{d-1}(n).
\]
The second item is direct from the fact that $\frac{n^{d-1} \lg\lg n}{\lg^2 n}$ is increasing in $n$, and so is $\frac{n^d}{\lg^3 n}$.
For the third item, observe first that since $r_d(\cdot)$ is increasing by the previous item,
\[
r_d(\ceil{n/(d-1)}) \leq r_d\left(\frac{n+d-2}{d-1}\right) 
\]
 Then, note that the function $a(n) := \lg \lg n / \lg^2 n$ satisfies 
 \(
 a\left(\frac{n+d-2}{d-1}\right) \leq 1.01 a(n),
 \) for $n \geq 2^{2^{200d^2}}$. Indeed,
 \[
 \frac{\lg \lg \left(\frac{n+d-2}{d-1}\right)}{\lg^2\left(\frac{n+d-2}{d-1}\right)} \leq \frac{\lg \lg n}{\lg^2 \left(\frac{n+d-2}{d-1}\right)} \leq \frac{\lg \lg n}{\lg^2(n/(d-1))} = \frac{\lg \lg n}{(\lg n - \lg (d-1))^2},
 \]
 but since $n \geq 2^{2^{200d^2}}$, we have $\lg n \geq 2^{200d^2} > 10000 \lg (d-1)$, and thus $\lg n - \lg (d-1) \geq 0.9999 \lg n$. We therefore have 
 \[
  a\left(\frac{n+d-2}{d-1}\right) \leq \frac{\lg \lg n}{(0.9999 \lg n)^2} \leq 1.01a(n).  
 \]
 Now note that as $n \geq 2^{2^{200d^2}}$,
 \[
 \left(\frac{n+d-2}{n}\right)^{d-1} = \left(1 + \frac{d-2}{n} \right)^{d-1} \leq \exp((d-2)(d-1)/n) \leq 1.01,
 \]
from where we conclude
 \begin{align*}
r_d(\ceil{n/(d-1)}) &\leq \frac{4^d}{d!} (n+d-2)^{d-1} \cdot \frac{1}{(d-1)^{d-1}} \cdot 1.01a(n) \cdot b(d)\\
&\leq 1.01^2 \cdot \frac{1}{(d-1)^{d-1}} \frac{4^d}{d!} n^{d-1} a(n)b(d) \leq 1.03 \frac{r_d(n)}{(d-1)^{d-1}}. \qedhere 
 \end{align*}

\end{proof}

\begin{proof}[Proof of~\Cref{claim:outer}]
The number of auxiliary $d$-cliques containing $v$ is at most
\begin{align*}
    (\star) := &\sum_{\substack{\vec{x} \in \mathcal{S}_d \\ x_i \neq 0}} \binom{|P_i|}{x_i-1} \cdot \left(\prod_{j \in [d-1] \setminus \{i,f(\vec{x})\}} \binom{|P_j|}{x_j}\right) h_{x_{f(\vec{x})}}(|P_{f(\vec{x})}|)\\
    &\leq \sum_{\substack{\vec{x} \in \mathcal{S}_d \\ x_i \neq 0}} \frac{|P_i|^{x_i - 1}}{(x_i-1)!} \cdot \left(\prod_{j \in [d-1] \setminus \{i,f(\vec{x})\}} \frac{|P_j|^{x_j}}{x_j!}\right) h_{x_{f(\vec{x})}}(|P_{f(\vec{x})}|)\\
    &\leq \sum_{\substack{\vec{x} \in \mathcal{S}_d \\ x_i \neq 0}} x_i 
    \left(\prod_{j \in [d-1] \setminus \{f(\vec{x})\}} \frac{1}{x_j!}\right) 
     \left(|P_i|^{x_i - 1}\prod_{j \in [d-1] \setminus \{i, f(\vec{x})\}} |P_j|^{x_j}\right) h_{x_{f(\vec{x})}}(|P_{f(\vec{x})}|)\\
     &= \sum_{\substack{\vec{x} \in \mathcal{S}_d \\ x_i \neq 0}} x_i 
    \left(\prod_{j \in [d-1] \setminus \{f(\vec{x})\}} \frac{1}{x_j!}\right) 
     \left(|P_i|^{x_i - 1}\prod_{j \in [d-1] \setminus \{i, f(\vec{x})\}} |P_j|^{x_j}\right) c(d) \cdot \frac{|P_{f(\vec{x})}|^{x_{f(\vec{x})}}}
     {\lg^3 |P_{f(\vec{x})}|},
\end{align*}
where the last equality introduced notation $c(d) := 2^{d2^{200d^2}}$.
We now observe that
\begin{align*}
(\star) &= \sum_{\substack{\vec{x} \in \mathcal{S}_d \\ x_i \neq 0}} x_i 
    \left(\prod_{j \in [d-1] \setminus \{f(\vec{x})\}} \frac{1}{x_j!}\right)  \left(|P_i|^{x_i - 1}\prod_{j \in [d-1] \setminus \{i\}} |P_j|^{x_j}\right)  \frac{c(d)}{\lg^3 |P_{f(\vec{x})}|}\\
    &\leq  \sum_{\substack{\vec{x} \in \mathcal{S}_d \\ x_i \neq 0}} x_i  x_{f(\vec{x})}!
    \left(\prod_{j \in [d-1]} \frac{1}{x_j!}\right)  \ceil{n/(d-1)}^{d-1}
    \frac{c(d)}{\lg^3\left(n/d\right)}\tag{Using $|P_{f(\vec{x})}| \geq \floor{n/(d-1)} \geq n/d$} \\
    &\leq d \cdot (d-1)^d \ceil{n/(d-1)}^{d-1}
    \frac{c(d)}{(\lg n - \lg d)^3} \tag{By~\Cref{eq:identity}}.
\end{align*}

We now need once again the fact
\begin{align*}
\ceil{n/(d-1)}^{d-1} &\leq \left(\frac{n+d-2}{d-1}\right)^{d-1} = \left(\frac{n}{d-1}\right)^{d-1} \left(1 + \frac{d-2}{n}\right)^{d-1} \\& \leq \left(\frac{n}{d-1}\right)^{d-1} e^{(d-2)(d-1)/n} \leq 1.01 \left(\frac{n}{d-1}\right)^{d-1}.
\end{align*}

From this, we have 
\[
(\star) \leq 1.01d \cdot n^{d-1} \frac{c(d)}{(\lg n - \lg d)^3} \leq 2d \cdot n^{d-1} \frac{c(d)}{\lg^3 n},\tag{Using $\lg d \leq 0.1 \lg n$}
\]
and as $r_d(n) = \frac{4^d}{d!} \cdot c(d) \cdot  \frac{n^{d-1} \lg \lg n}{\lg^2 n}$, we have 
\(
(\star)/r_d(n) \leq \frac{2d \cdot d!}{4^d \lg n \lg \lg n} \leq \frac{2d \cdot d!}{\lg n } < \frac{1}{3},
\)
where the last inequality holds since $n \geq 2^{2^{200d^2}}$ implies $\lg n \geq 2^{200d^2} $, and $d! \leq d^d = 2^{d \lg d}$.
\end{proof}

\begin{proof}[Proof of~\Cref{claim:count}]
We start by noting that
     \begin{align*}  
    |\mathcal{C}| &\leq \sum_{\vec{x} \in \mathcal{S}_d} \left(\prod_{i \in [d-1] \setminus \{f(\vec{x})\}} \binom{|P_i|}{x_i}\right) h_{x_{f(\vec{x})}}(|P_{f(\vec{x})}|)\\
    &\leq \sum_{\vec{x} \in \mathcal{S}_d} \left(\prod_{i \in [d-1] \setminus \{f(\vec{x})\}} \frac{|P_i|^{x_i}}{x_i!}\right) h_{x_{f(\vec{x})}}(|P_{f(\vec{x})}|).
      \end{align*}
Now, we split the sum according to whether some $x_i = d$ or not. Thus,
\begin{align*}
|\mathcal{C}| &\leq \left[\sum_{\substack{\vec{x} \in \mathcal{S}_d\\ d  \not\in \vec{x}}} \left(\prod_{i \in [d-1] \setminus \{f(\vec{x})\}} \frac{|P_i|^{x_i}}{x_i!}\right) h_{x_{f(\vec{x})}}(|P_{f(\vec{x})}|)\right] + (d-1)h_{d}(\ceil{n/(d-1)}) \\
&\leq \left[\sum_{\substack{\vec{x} \in \mathcal{S}_d\\ d  \not\in \vec{x}}} \left(\prod_{i \in [d-1] \setminus \{f(\vec{x})\}} \frac{|P_i|^{x_i}}{x_i!}\right) \frac{c(x_{f(\vec{x})})|P_{f(\vec{x})}|^{x_{f(\vec{x})}}}{\lg^3 |P_{f(\vec{x})}|}\right] + (d-1)h_{d}(\ceil{n/(d-1)})\\
&\leq \left[\sum_{\substack{\vec{x} \in \mathcal{S}_d\\ d  \not\in \vec{x}}} 
\ceil{n/(d-1)}^d
x_{f(\vec{x})}! \left(\prod_{i \in [d-1] } \frac{1}{x_i!}\right) \frac{c(d-1)}{\lg^3 |P_{f(\vec{x})}|}\right] + (d-1)h_{d}(\ceil{n/(d-1)})\\
&\leq \ceil{n/(d-1)}^d d! \cdot \frac{(d-1)^d}{d!} \frac{c(d-1)}{\lg^3 (n/d)} + (d-1)h_{d}(\ceil{n/(d-1)})\\
&\leq 1.01 \frac{n^d \cdot c(d-1)}{\lg^3 n} + \frac{1.01}{(d-1)^{d-1}}\frac{c(d) n^d}{\lg^3 n}\\
&= h_d(n) \cdot \left( \frac{1.01 c(d-1)}{c(d)} + \frac{1.01}{(d-1)^{d-1}} \right) < h_d(n).
\end{align*}
where the second-to-last inequality used the same analysis as in the proof of~\Cref{claim:outer}, and the last inequality used, e.g., $\frac{c(d-1)}{c(d)} < 1/10$ for $d \geq 3$.
\end{proof}


\end{document}